\newcommand{\arrow}{\to}
\newcommand{\RemoveAuthor}{0}
\begin{document}
\title{A simple and sharper proof of the hypergraph Moore bound}
\ifnum\RemoveAuthor=1

\else
\author{Jun-Ting Hsieh\thanks{Carnegie Mellon University.  \texttt{juntingh@cs.cmu.edu}.
Supported by NSF CAREER Award \#2047933.} \and Pravesh K. Kothari\thanks{Carnegie Mellon University. \texttt{praveshk@cs.cmu.edu}.
Supported by  NSF CAREER Award \#2047933, Alfred P. Sloan Fellowship and a Google Research Scholar Award.} \and Sidhanth Mohanty\thanks{UC Berkeley.  \texttt{sidhanthm@cs.berkeley.edu}.
Supported by a Google PhD Fellowship.}}
\fi
\date{\today}
\maketitle

\begin{abstract}
The hypergraph Moore bound is an elegant statement that characterizes the extremal trade-off between the girth --- the number of hyperedges in the smallest cycle or \emph{even cover} (a subhypergraph with all degrees even) and size --- the number of hyperedges in a hypergraph. For graphs (i.e., $2$-uniform hypergraphs), a bound tight up to the leading constant was proven in a classical work of Alon, Hoory and Linial~\cite{AHL02}. For hypergraphs of uniformity $k>2$, an appropriate generalization was conjectured by Feige~\cite{Fei08}. The conjecture was settled up to an additional $\log^{4k+1} n$ factor in the size in a recent work of Guruswami, Kothari and Manohar~\cite{GKM21}. Their argument relies on a connection between the existence of short even covers and the spectrum of a certain randomly signed \emph{Kikuchi} matrix. Their analysis, especially for the case of odd $k$, is significantly complicated. 

In this work, we present a substantially simpler and shorter proof of the hypergraph Moore bound. Our key idea is the use of a new \emph{reweighted} Kikuchi matrix and an \emph{edge deletion} step that allows us to drop several involved steps in \cite{GKM21}'s analysis such as combinatorial bucketing of rows of the Kikuchi matrix and the use of the Schudy--Sviridenko polynomial concentration. Our simpler proof also obtains tighter parameters: in particular, the argument gives a new proof of the classical Moore bound of~\cite{AHL02} with no loss (the proof in \cite{GKM21} loses a $\log^3 n$ factor), and loses only a single logarithmic factor for all $k>2$-uniform hypergraphs.


As in~\cite{GKM21}, our ideas naturally extend to yield a simpler proof of the full trade-off for strongly refuting smoothed instances of constraint satisfaction problems with similarly improved parameters.

\end{abstract}

\thispagestyle{empty}
\setcounter{page}{0}
\newpage

\section{Introduction}

What is the maximum girth of a graph on $n$ vertices and average degree $d$? For $d$-regular graphs, a simple ``ball growing'' argument shows that the graph must have a cycle of length at most $2\log_{d-1} n+2$. This threshold is called the \emph{Moore bound}~\cite{enwiki:1096680615} (see Page 180 of~\cite{MR1271140}) and graphs achieving it are called Moore graphs. In a classical paper that resolved a question of \Bollobas~\cite{MR506522}, Alon, Hoory and Linial~\cite{AHL02} proved that the same upper bound holds even for irregular graphs. Later on, Hoory~\cite{hoory2002size} obtained a better bound for bipartite graphs and Babu and Radhakrishnan~\cite{BR14} found an elegant proof based on the entropy of random walks. 

\medskip
\noindent {\bf Girth-density trade-offs for hypergraphs.}
This work is about a natural and well-studied generalization of the Moore bound to $k>2$-uniform hypergraphs. A cycle\footnote{There are several well-studied combinatorial notions of \href{https://en.wikipedia.org/wiki/Hypergraph\#Cycles}{cycles} in contrast to the more linear algebraic notion of even covers.} in a hypergraph, more descriptively called an \emph{even cover}, is a collection of hyperedges such that every vertex participates in an even number of them. The girth of a hypergraph is the smallest size of an even cover in it. When specialized to graphs, an even cover is simply a union of cycles and thus, this formulation naturally generalizes the standard notion of girth in graphs. 

Analogously to the Moore bound, understanding the maximum number of hyperedges that one can pack in a hypergraph while avoiding an even cover of a given length is a basic \emph{hypergraph \Turan} problem. Hypergraph \Turan problems are typically significantly more difficult than their counterparts in graphs. Indeed, even the original hypergraph \Turan conjecture from the 1940s that studies an appropriate analog of triangle free hypergraphs is still open. We direct the reader to the recent survey of Keevash~\cite{Kee11} for an overview of hypergraph Turán theory. 

\medskip
\noindent {\bf Applications of girth-density trade-offs.} Like the graph Moore bound, girth-density trade-offs for hypergraphs have foundational connections to several research directions in theoretical computer science. One source of such applications is the observation that for a collection of linear equations in $\F_2$ on $n$ variables, if we associate each equation to the set indicated by its coefficient vector, then the girth of the resulting hypergraph on $[n]$ is the same as the size of the smallest linearly dependent subset of equations. As a consequence, rate vs distance trade-offs for \emph{low density parity check} (LDPC) codes are equivalent to the girth vs size trade-offs for $k$-uniform hypergraphs with hyperedges corresponding to the columns of the parity check matrix. As a result, there is an extensive line of work that studies the girth-density trade-offs for hypergraphs (see e.g.~\cite{BKHL99,BMS08,AF09}).

Naor and Verstraëte~\cite{NV08} started a systematic study of hypergraph girth density trade-offs. They were explicitly motivated by mapping the rate-distance trade-offs for LDPC codes and computing product representations of square integers arises as a step in sub-exponential time algorithms for integer factoring. In particular, they showed that every $k$-uniform hypergraph on $n$ vertices and $O(n^{k/2} \log n)$ hyperedges must have an even cover of size $O(\log n)$. Improving the bounds of~\cite{NV08} for $k=3$, Feige~\cite{Fei08} proved that every 3-uniform hypergraph on $n$ vertices and $O(n^{3/2}) \log \log n$ hyperedges has an even cover of length $O(\log n)$. Feige's motivation was a connection, via the connection to linear equations modulo 2 discussed above, to refuting random 3SAT formulas and generalizations. In particular, by exploiting this improved bound, Feige derived a weak refutation algorithm for smoothed 3SAT formulas with $O(n^{1.5} \log \log n)$ constraints.

\medskip
\noindent \textbf{Hypergraph Moore bound.}
Feige's result leaves open the uncharted territory of hypergraph sizes between $m \sim n$ and $m\sim n^{k/2}$ --- a polynomially large multiplicative interval when $k>2$. The work of Feige, Kim and Ofek~\cite{FKO06} found an intriguing connection between the girth bounds in this interesting regime and the foundational average-case problem of refuting random 3SAT formulas~\cite{Fei02}. They observed that \emph{random} hypergraphs with $m \gtrsim n^{1.4}$ hyperedges\footnote{Throughout this work, we will use the notation $f \gtrsim g$ to stand for ``there exists a constant $C>0$ such that $f \geq Cg$''.} must have an even cover of length $O(n^{0.2})$ and used a tour de force argument based on the second moment method to establish that at the same density, random hypergraphs should contain $n^{1.4}$ different almost disjoint even covers of size $n^{0.2}$. As a consequence, they obtained their celebrated result on the existence of polynomial size witnesses of unsatisfiability for random 3SAT formulas with $O(n^{1.4})$ constraints --- a threshold that is $n^{0.1}$ factor smaller than the $m \gtrsim n^{1.5}$ bound for the best known efficient refutation algorithms. Motivated both by whether FKO witnesses could be efficiently constructed (and potentially refute a strong form of Feige's Random 3SAT hypothesis~\cite{Fei02}) and investigating whether such certificates exist in semirandom and smoothed 3SAT formulas, Feige~\cite{Fei08} conjectured the following \emph{hypergraph Moore bound}. 

\begin{conjecture}[Hypergraph Moore Bound (Feige's conjecture), Conjecture 1.2 of~\cite{Fei08}] \label{conj:feige-conjecture}
    For every $k \in \N$ and $1 \leq r \leq n$, every hypergraph with $n$ vertices and $m \gtrsim n (\frac{n}{r})^{\frac{k}{2}-1}$ hyperedges has an even cover of size $O(r\log n)$.
\end{conjecture}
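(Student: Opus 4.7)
My approach follows the Kikuchi matrix paradigm of~\cite{GKM21}, but with two key simplifications: a reweighting of entries that equalizes ``row mass'' across the Kikuchi matrix, and an explicit \emph{edge deletion} step that controls maximum row degree. Together these should replace both the combinatorial bucketing of rows and the invocation of Schudy--Sviridenko polynomial concentration in~\cite{GKM21}, yielding a cleaner proof that also tightens the logarithmic factors. Concretely, I would fix a level parameter $\ell \approx r$ and draw i.i.d.\ uniform signs $\chi_H \in \{\pm 1\}$ for each hyperedge $H$. When $k$ is even, define the symmetric $\binom{n}{\ell} \times \binom{n}{\ell}$ Kikuchi matrix $A_\chi$ by placing (weighted) $\chi_H$ at every pair $(S,T)$ with $S \triangle T = H$ and $|S \cap H| = |T \cap H| = k/2$; for odd $k$, use an analogous bipartite construction indexing $\ell$-subsets against $(\ell+1)$-subsets.

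The high-level strategy is to argue that, absent any even cover of size $O(r\log n)$, the spectral norm of $A_\chi$ is much smaller than what its Frobenius norm demands. The lower bound is
\[
\|A_\chi\|^2 \;\geq\; \frac{\|A_\chi\|_F^2}{\binom{n}{\ell}} \;\gtrsim\; m \left(\frac{\ell}{n}\right)^{k/2},
\]
which is large under the density hypothesis $m \gtrsim n(n/r)^{k/2-1}$. The upper bound comes from a trace moment computation
\[
\mathbb{E}\operatorname{tr}\bigl(A_\chi^{2t}\bigr) \;=\; \sum_W \Bigl(\prod_H \chi_H^{m_W(H)}\Bigr)\cdot(\text{weight of }W),
\]
summed over closed walks $W$ of length $2t$ in the Kikuchi graph. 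Under i.i.d.\ signs, only walks whose hyperedge multiset is ``even'' survive in expectation. If no even cover of size $\leq 2t$ exists with $t \approx \log n$, each surviving walk must traverse every hyperedge it uses at least twice, dramatically restricting the count and yielding the desired upper bound on $\|A_\chi\|$ via Markov.

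The crux is the preprocessing that makes the trace bound clean. I would iteratively delete any hyperedge $H$ incident to an $\ell$-subset whose current Kikuchi row degree exceeds a threshold $\tau$ slightly above the average; stop when no such $H$ remains. A pigeonhole/averaging argument should show that at most $m/2$ hyperedges are removed, so the density hypothesis still holds up to constants. After deletion, standard trace-moment estimates for bounded-degree random matrices yield the spectral upper bound without combinatorial bucketing, and the weights $w_H$ can be chosen so that every ``layer'' of the matrix contributes equally, sidestepping Schudy--Sviridenko. I expect the hardest step to be the deletion argument for odd $k$: the Kikuchi construction is inherently bipartite between $\ell$- and $(\ell+1)$-sets, so one must track row degrees on both sides and argue that neither side loses too many edges before the process halts. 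Pulling this off cleanly --- in particular, matching the density threshold in the conjecture with only a single logarithmic loss --- is where I would expect to spend most of the proof.
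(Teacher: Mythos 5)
There is a genuine gap at the heart of your plan: the quantity you propose to lower bound is the wrong one, and as a result the even-cover hypothesis never actually enters your argument. With i.i.d.\ signs $\chi_H$, the bound $\|A_\chi\|^2 \ge \|A_\chi\|_F^2/\binom{n}{\ell}$ only gives $\|A_\chi\| \gtrsim \sqrt{d}$ (where $d$ is the average Kikuchi degree), while your trace-moment upper bound for the \emph{signed} matrix restricts to even-multiplicity walks purely because of sign cancellation --- exactly as it would for any hypergraph, with or without short even covers (note your sentence deriving even multiplicities from the absence of even covers is redundant given the randomness of the signs). So both sides scale like $\sqrt{d}\cdot\mathrm{polylog}$, no contradiction with the density hypothesis can ever be extracted, and no constraint on $m$ follows. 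The mechanism that actually works (and is what the paper does) is to take the \emph{unsigned} Kikuchi matrix $A$, whose all-ones quadratic form $\mathbf{1}^\top A \mathbf{1} = 2|E|$ is of order $d\cdot\binom{n}{\ell}$, i.e.\ the lower bound is $d$, not $\sqrt{d}$; the hypothesis that there is no even cover of size at most $\ell$ is then used to show that \emph{every} closed walk of length $\le \ell$ in the unsigned graph has even hyperedge multiplicities, so the unsigned trace is as small as the random-signed one, forcing $d \lesssim \ell$ and hence the bound on $m$. Degree irregularity is handled not by your Frobenius normalization but by comparing against a diagonal reweighting $\Gamma = D + d\,\mathrm{Id}$ (proving $A \preceq \lambda \Gamma$ and pairing $\mathbf{1}^\top A\mathbf{1}$ with $\tr(\Gamma)$); without some such device the unsigned spectral norm can be dominated by $d_{\max}$ and the double counting fails.

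Two further points would also need repair even after fixing the above. First, the walk length must be taken $2t \approx r\log n$ (so that $\binom{n}{\ell}^{1/2t}=O(1)$), not $t\approx\log n$; with $t\approx \log n$ you pick up a $2^{\Theta(r)}$ loss. Second, your odd-$k$ plan (a bipartite Kikuchi matrix between $\ell$- and $(\ell+1)$-subsets, plus deleting whole hyperedges at high-degree rows) does not match anything that is known to work: the paper first runs a hypergraph decomposition that controls how many hyperedges contain any given small set, then pairs clauses sharing a common ``center'' set (a Cauchy--Schwarz step) to get an even-arity object, builds a \emph{colored} Kikuchi graph on $2n$ ground elements to keep the paired clauses disjoint, and deletes \emph{Kikuchi edges} $(S,T)$ at which some clause participates in more than one incident edge; bounding the deletion rate crucially uses the regularity guaranteed by the decomposition. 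Your hyperedge-deletion-by-pigeonhole step has no analogous control over per-clause row multiplicities, which is what the encoding of ``old hyperedge'' steps in the trace argument actually needs.
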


In addition to a complete rate-distance profile for LDPC codes, Feige's conjecture implies (see Section 9 in~\cite{GKM21} for an exposition) a significantly simpler and 2nd-moment-method-free proof of the existence of the FKO~\cite{FKO06} refutation witnesses below the spectral threshold for random 3SAT (and other CSPs) that also generalizes to semirandom and smoothed instances\footnote{A smoothed Boolean CSP instance is obtained by starting from a worst-case instance and perturbing the literal patterns by independently flipping each with some small constant probability (with probability 1/2 in the special case of the semirandom model). In particular, in contrast to random CSPs where the variables in every clause are generated uniformly at random, smoothed and semirandom CSP instances have a worst-case clause structure.}. 

Feige's conjecture was recently settled by Guruswami, Kothari and Manohar~\cite{GKM21} up to an additional $\log^{4k+1} n$ multiplicative factor in the density $m$. Their proof goes via a new connection between the existence of small even covers in $k$-uniform hypergraphs and sub-exponential size spectral refutations of semirandom $k$-XOR formulas via a certain \emph{Kikuchi} matrix.

While \cite{GKM21} begins with an elegant and simple observation, their technical analysis especially for  odd $k$ (the ``hard'' case in all algorithms and certificates for refutation) is quite complicated and involves manipulating the Kikuchi matrix via ``row bucketing'' and ``row pruning'' in various steps and invoking the Schudy--Sviridenko concentration inequality~\cite{SS12} (that extends the breakthrough work of Kim and Vu~\cite{KV00}) for polynomials with combinatorial structure in the monomials.
As a consequence, even for the simplest case of $k=2$ (i.e., recovering the classical Moore bound), their proof incurs an additional $\log^3 n$ factor. 

\subsection{Our results}
The main result of this work is a simple and short proof of the hypergraph Moore bound that is \emph{almost} tight up to a single logarithmic factor. 

\begin{theorem} \label{thm:main-thm}
    For every $k \in \N$ and $1 \leq r \leq n$, every hypergraph on $n$ vertices and $m \gtrsim n \log n \cdot (\frac{n}{r})^{\frac{k}{2}-1}$ hyperedges has an even cover of size $O(r\log n)$.
\end{theorem}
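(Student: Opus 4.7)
The plan is to prove the contrapositive via a trace-moment argument on a \emph{reweighted Kikuchi matrix}, building on the framework of~\cite{GKM21}. Assume toward contradiction that $H$ has no even cover of size at most $\ell := O(r \log n)$. When $k = 2d$ is even, define the level-$\ell$ Kikuchi matrix $A_\chi$ with rows and columns indexed by $\binom{[n]}{\ell}$: set $A_\chi[S,T] = \chi_e$ whenever there is a hyperedge $e \in E$ with $S \triangle T = e$, where $\chi \in \{\pm 1\}^E$ is a uniformly random signing. The odd-$k$ case I would handle by the usual trick of adjoining one auxiliary vertex to every hyperedge to reduce to an even uniformity.

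The heart of the argument is a pincer between a lower and an upper bound on $\|A_\chi\|$. For the lower bound, a matrix Khintchine inequality (or equivalently a direct second-moment computation) shows that for a random signing, $\|A_\chi\|$ is with high probability at least a $\sqrt{\log N}$ factor times the per-row $\ell_2$-norm of $A_\chi$, where $N = \binom{n}{\ell}$. For the upper bound, expand $\mathbb{E}_\chi \operatorname{tr}(A_\chi^{2q})$ for $q \leq \ell/2$ as a sum over closed walks of length $2q$ in the Kikuchi graph; each surviving walk must use each underlying hyperedge an even number of times, because otherwise the distinct edges of the walk would form an even cover of size $\leq 2q \leq \ell$, contradicting the assumption. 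This collapses the trace to a Frobenius-type quantity, and equating the two bounds at $q \sim \log n$ yields the target density bound $m \lesssim n \log n \cdot (n/r)^{k/2-1}$.

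The principal obstacle is that the raw Kikuchi matrix has wildly non-uniform row weights, which is what forced~\cite{GKM21} into combinatorial row bucketing and an invocation of Schudy--Sviridenko polynomial concentration. My plan to sidestep both steps is precisely the edge-deletion and reweighting scheme advertised in the abstract: first, delete every hyperedge that is incident (in the Kikuchi sense) to an abnormally heavy pair of rows, losing only a constant fraction of $m$; second, reweight each surviving entry of $A_\chi$ by a factor that compensates for the residual irregularity, so that the upper-bound trace computation sees an effectively regular matrix and scalar Khintchine suffices on the lower-bound side. The trickiest point I expect is the odd-$k$ reduction: the weights on the lifted hyperedges must be chosen so that the spectrum of the lifted Kikuchi matrix still faithfully controls the original hypergraph's even covers, and the deletion step in the lifted graph must never erase a short even cover in the original — this is where the argument has to be tuned carefully to lose only a single $\log n$ factor rather than the $\log^{4k+1} n$ of~\cite{GKM21}.
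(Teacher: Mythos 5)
The fatal gap is your odd-$k$ plan. Adjoining an auxiliary vertex to every hyperedge does preserve even covers (a lifted even cover projects to one in the original hypergraph), but you would then be invoking the even-uniformity theorem at arity $k+1$, which requires $m \gtrsim n\log n\cdot(n/r)^{\frac{k+1}{2}-1}$ --- larger than the claimed odd-$k$ threshold $n\log n\cdot(n/r)^{\frac{k}{2}-1}$ by a factor of $(n/r)^{1/2}$. For $k=3$ and $r=O(1)$ this means demanding $m\sim n^2\log n$ instead of $n^{3/2}\log n$, so the reduction cannot prove the theorem; this exponent gap is precisely why the odd case is the hard one in \cite{GKM21} and here. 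The paper instead decomposes $\mathcal{H}$ according to the sizes of heavily-repeated intersections (\pref{alg:hypergraph-decomp-cover}), pairs clauses sharing a common set --- a Cauchy--Schwarz step that produces a \emph{colored} Kikuchi graph on size-$r$ subsets of $[n]\times[2]$ (\pref{def:odd-kikuchi-graph}) --- reduces directly to the even case only when the common intersections have size at least $\frac{k+1}{2}$ (\pref{lem:large-intersections}), and runs an edge-deletion process so that in the surviving subgraph each clause has at most one incident edge at every vertex; that last property is what makes the ``old hyperedge'' steps of a closed walk encodable (\pref{claim:few-deletions}, \pref{lem:good-subgraph-bound}). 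None of this machinery is recoverable from a lift to even uniformity, so your proposal as stated does not yield the odd case.

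The even-$k$ half is in the right spirit (Kikuchi matrix, trace moments, walks forced to use every hyperedge an even number of times), but three points need repair. First, the lower-bound side of your ``pincer'' is not a valid inequality: noncommutative Khintchine gives an \emph{upper} bound of the form $\sqrt{\log N}$ times a row-norm quantity, not a high-probability lower bound with a $\sqrt{\log N}$ gain, and no random signing is needed for the Moore bound at all --- the paper works with the unsigned adjacency matrix, where $\mathbf{1}^\top A\mathbf{1}=2|E|$ already counts hyperedges, and the certificate is the PSD inequality $A \preceq \lambda\,\Gamma$ with $\Gamma=D+d\,\mathrm{Id}$, so that $2|E| \le \lambda\,\mathrm{tr}(\Gamma) = 4\lambda|E|$ forces $\lambda \ge 1/2$ and hence $d \lesssim \ell$. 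Second, your parameters are mismatched: the Kikuchi level must be $r$ while the walk length is $\ell \approx r\log_2 n$, chosen so that $\binom{n}{r}^{1/\ell}=O(1)$; indexing rows by $\binom{[n]}{\ell}$ and taking trace power $2q\sim\log n$ makes the starting-vertex factor $\binom{n}{\ell}^{1/2q}\approx n^{r}$ swamp the bound. Third, the reweighting --- the actual new idea that replaces row bucketing --- is left unspecified; the specific choice $\Gamma=D+d\,\mathrm{Id}$ is what makes each ``new edge'' step pay $d_S/(d_S+d)\le 1$ and each forced backtrack pay at most $\ell/d$, which is how the walk count closes without any regularity assumption on the Kikuchi graph.
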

In \pref{sec:weak-moore-bound} and \pref{app:exact-moore-bound}, as evidence of the power of our proof strategy, we obtain yet another proof of the classical Moore bound~\cite{AHL02} with the same leading constant. Independently of our work, David Munh\'a Correia and Benny Sudakov~\cite{CS22} informed us that they have found a simple, combinatorial argument for analyzing the Kikuchi matrix to prove a hypergraph Moore bound for even arity $k$ that also loses only a single logarithmic factor. 

Our techniques extend to give a simple and tighter proof of a sub-exponential time strong refutation algorithm for semirandom $k$-XOR formulas when the number of constraints is below the ``spectral threshold'' $n^{k/2}$, which is spelled out in \pref{sec:refutation}. Via the standard XOR trick (see, for example, \cite{AOW15}), this recovers a tighter trade-off for refuting smoothed Boolean constraint satisfaction problems as in~\cite{GKM21}. Prior to our work, a bound tight up to $\log n$ factors was not known even for the (significantly) easier setting of \emph{fully random} $k$-XOR refutation for odd $k$ (the argument of~\cite{WAM19} obtains such a result for even $k$) where the best known bound due to~\cite{RRS17} loses a $\log^{2k} n$ factor.

\begin{theorem}[Informal]
    Fix $k\in \N$ and $r \leq n$, there is an $n^{O(r)}$-time algorithm such that given a semirandom $k$-XOR instance $\psi$ with $n$ variables and $m \gtrsim n\log n \cdot (\frac{n}{r})^{\frac{k}{2}-1}$ constraints, it certifies that $\psi$ is not $(1/2+0.01)$-satisfiable.
\end{theorem}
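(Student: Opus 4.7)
The plan is to reduce refuting a semirandom $k$-XOR instance $\psi = \{(C_i, b_i)\}_{i=1}^m$ to certifying an upper bound on the spectral norm of a reweighted Kikuchi matrix. Writing $\Phi_\psi(x) = \tfrac{1}{m}\sum_i (-1)^{b_i} \prod_{j \in C_i} x_j$, the goal is to certify $\max_{x \in \{\pm 1\}^n} \Phi_\psi(x) \leq 0.02$, which is equivalent to $\psi$ not being $(1/2+0.01)$-satisfiable. Following the Kikuchi construction (bipartite for odd $k$), I would define a matrix $A$ with rows and columns indexed by $\ell$-subsets of $[n]$ for $\ell = \Theta(r)$, where $A_{S,T}$ accumulates the signs $(-1)^{b_i}$ over constraints $i$ with $C_i = S \triangle T$ and a prescribed intersection size $|S \cap T|$. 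The tensorization $z_S = \prod_{j \in S} x_j$ satisfies $z^\top A z \geq c \cdot m \cdot \Phi_\psi(x)$, so $m \cdot \max_x \Phi_\psi(x)$ is controlled by $\binom{n}{\ell} \cdot \|A\|_{\mathrm{op}}$.

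The second step is to import the two key ideas behind Theorem~\ref{thm:main-thm}: a \emph{row reweighting} that equalizes the effective contribution of each row to the squared spectral norm, and an \emph{edge deletion} step that discards a small number of anomalously high-degree rows. After these modifications, $A$ becomes a matrix whose entries are, up to the reweighting, independent Rademacher-weighted variables (using the independence of the $b_i$'s in the semirandom model) with uniformly bounded $\ell^2$-norms in each row. A direct application of the matrix Bernstein / non-commutative Khintchine inequality then yields $\|A\|_{\mathrm{op}} \lesssim \sqrt{d^\star \log n}$, where $d^\star$ is the maximum reweighted row $\ell^2$-norm; the $\sqrt{\log n}$ here is precisely the source of the single logarithmic loss appearing in Theorem~\ref{thm:main-thm}.

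Combining these estimates with the choice $\ell = \Theta(r)$, a routine calculation parallel to the one behind Theorem~\ref{thm:main-thm} shows that $m \gtrsim n \log n \cdot (n/r)^{k/2-1}$ suffices to force the certified upper bound on $\|A\|_{\mathrm{op}}$ to translate into $\max_x \Phi_\psi(x) \leq 0.02$ with high probability over the random signs $b_i$. The algorithm is then straightforward: construct $A$, apply the reweighting and the edge deletion, and compute the operator norm; since $A$ has $\binom{n}{\ell} = n^{O(r)}$ rows, this runs in $n^{O(r)}$ time and produces a bona fide certificate of unsatisfiability.

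The main obstacle I expect, as in~\cite{GKM21}, is the case of odd $k$: the Kikuchi matrix is naturally bipartite with rows and columns of asymmetric sizes, and the reweighting must be designed on both sides so that row and column weighted degrees are simultaneously controlled in order for Khintchine to give the tight bound. A second subtle point is verifying that the edge-deletion step discards only a vanishing fraction of the constraints, so that the certified upper bound on $\|A\|_{\mathrm{op}}$ translates into a bound on the original $\Phi_\psi$ rather than on a proper subformula. Handling these two points cleanly is what should allow the argument to sidestep the combinatorial row bucketing and the Schudy--Sviridenko polynomial concentration of~\cite{GKM21}, yielding the single-logarithmic-factor trade-off claimed in the theorem.
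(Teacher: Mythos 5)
Your plan for even $k$ is essentially workable and close in spirit to the paper: with the reweighting $\Gamma = D + d\Id$ one can indeed control the reweighted signed Kikuchi matrix, and a matrix Rademacher bound would give the same $O(\sqrt{r\log n/d})$ norm bound that the paper gets (note, though, that the logarithm you pick up is $\log\binom{n}{r} = O(r\log n)$, not $\log n$ --- this is exactly what produces the $r$-dependence in the trade-off, so your accounting of ``the $\sqrt{\log n}$ loss'' is off by the factor $r$, even if the final arithmetic comes out the same). The paper takes a slightly different route here: it never invokes matrix Bernstein/Khintchine but instead runs the trace moment method on $\Gamma^{-1/2}A_b\Gamma^{-1/2}$, observing that $\E_b$ of the trace power counts exactly the walks in which every hyperedge appears an even number of times, so the walk-encoding from the even-cover argument (\pref{lem:even-kikuchi-norm-bound}) can be reused verbatim.

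The genuine gap is in the odd-$k$ case, where your proposal is missing the structural steps that make the argument go through. First, for odd $k$ the paper does not use an asymmetric bipartite Kikuchi matrix that is linear in the signs; it first runs a hypergraph decomposition (\pref{alg:hypergraph-decomposition-refutation}) that partitions $\calH$ into pieces $\calH^{(t)}$ according to intersection-size thresholds $\tau_t$, certifies each sub-instance $\psi_t$ separately (\pref{lem:refuting-psi-t}), and within each group applies the Cauchy--Schwarz trick, pairing clauses $C\ne C'$ sharing a common center. The resulting colored Kikuchi matrix has edge signs $b_Cb_{C'}$: these are a degree-two Rademacher chaos, \emph{not} mutually independent entries, so your claimed ``direct application of matrix Bernstein / non-commutative Khintchine using the independence of the $b_i$'s'' is simply not available; the paper's substitute is again the trace moment method, where taking $\E_b$ annihilates every walk using some hyperedge an odd number of times and reduces the count to \pref{lem:good-subgraph-bound}. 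Second, the decomposition is not optional: the bound on the fraction of deleted edges (\pref{lem:deletion-rate}) relies on the regularity guarantee that no $(t+s)$-set lies in more than $\tau_{t+s}$ clauses, which only holds inside a piece of the decomposition; without it, worst-case clause structures with large pairwise intersections make the deletion rate (and the quantities $d_{S,i}$) uncontrollable. Third, you correctly flag that the certificate on the pruned matrix must transfer to the full quadratic form, but you do not supply the mechanism; the paper's fix is the \emph{equalizing} deletion step (\pref{obs:uniform-deletion}), which deletes the same fraction $\rho$ of edges from every pair $(C,C')$ so that $(x^{\odot r})^\top \wh{A}_b^{(t)} x^{\odot r} = (1-\rho)\,(x^{\odot r})^\top A_b^{(t)} x^{\odot r}$ holds as an identity, and the bound on the pruned matrix rescales by $1/(1-\rho)\le 2$ rather than bounding only a subformula. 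Without these three ingredients --- decomposition, a concentration argument valid for the dependent signs $b_Cb_{C'}$, and uniform deletion --- the odd-$k$ half of the theorem does not follow from your outline.
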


We believe that the last remaining logarithmic factor in the theorems above is also unnecessary. However, removing it seems related to certain technical difficulties that arise in beating the logarithmic factor incurred in spectral norm bounds for the matrix Rademacher series~\cite{Tropp}. In particular, the tightest known proof for the closely related problem of refuting fully random $k$-XOR formulas below the spectral threshold also loses a $\log n$ factor for even $k$~\cite{WAM19} and a $\log^{2k}n$-factor for odd $k$~\cite{RRS17}. For some easier settings such as refutation in the polynomial time regime~\cite{dT22} and understanding the SDP value of random NAE-3SAT and generalizations~\cite{FM17,DMO19,MOP20}, recent works manage to circumvent this difficulty by application of powerful tools such as the Ihara--Bass formula and the largest eigenvalue of non-backtracking walk matrices. Our proof suggests a natural and more elementary route to removing this final logarithmic factor but requires resolving the count of certain ``walks" that arise in our analysis.

\paragraph{Key ideas.} The abstract strategy employed by \cite{GKM21} is to construct a so called \emph{Kikuchi} matrix $A_{\calH}$ (first introduced in the work of~\cite{WAM19} on Gaussian tensor PCA) associated with our hypergraph $\calH$ where:
\begin{enumerate}[(i)]
    \item $\1^{\top}A_{\calH}\1$ is a surrogate for the number of hyperedges in $\calH$.
    \item The lack of short even covers in $\calH$ can be turned into a certificate that $\1^{\top}A_{\calH}\1$ is small, which then translates to a bound on $|\calH|$.
\end{enumerate}

The certificate used by \cite{GKM21} is $\Norm{A_{\calH}}_{\infty\to1}$, which they control by bucketing the rows by weight, bounding the spectral norm of each submatrix, and stitching these norms together.

Our key insight is in the style of certificate we provide --- we give a matrix $Q$ such that $Q\psdge A_{\calH}$.
Such a certificate implies a bound of $\tr(Q)$ on our surrogate for $|\calH|$.
The inequality $Q\psdge A_{\calH}$ is equivalent to proving $\Norm{Q^{-1/2}A_{\calH}Q^{-1/2}}_2 \le 1$, which can be done via the trace moment method with relative ease.
Our reweighting strategy is akin to constructing a ``diagonal weighted'' \emph{dual solution} for certifying upper bounds on the value of the basic SDP relaxation for quadratic optimization problems on the hypercube such as Max-Cut and the Grothendieck problems.
Our \emph{reweighting} strategy simplifies the analysis, removes the need for the ``row bucketing'' step in \cite{GKM21}, and lets us obtain a sharper result.

Our proof for odd $k$ requires combining our reweighted Kikuchi matrix with a new ``edge deletion'' operation that controls the ``heavy rows'' in the Kikuchi matrix. At a high level, our strategy involves deleting an appropriately chosen set of entries of the Kikuchi matrix in comparison to the ``row pruning" strategy of~\cite{GKM21} which involves deleting entire rows (vertices in the Kikuchi graph). This seemingly technical change leads to a great deal of simplification and in particular allows replacing the use of the Schudy--Sviridenko inequality~\cite{SS12} and the carefully introduced logarithmic factors in the hypergraph regularity decomposition in~\cite{GKM21}.

\paragraph{Organization.}
The rest of this paper is organized as follows.
In \pref{sec:warmup}, we give a complete (and sharper) proof of the hypergraph Moore bound for the even arity case, starting with a proof of a weak version (that loses additional constant factors) of the classical Moore bound using our ideas. We will include detailed commentary for the sake of exposition and a short overview of the additional ideas (including our new edge deletion trick) to handle the odd arity case. In \pref{sec:even-cover-odd}, we will give a proof of the hypergraph Moore bound for odd arity. Finally, in \pref{sec:refutation}, we will extend our techniques to obtain strong refutation algorithms for semirandom and smoothed Boolean CSPs.
\newcommand{\cH}{\mathcal{H}}
\section{Warm-up: hypergraph Moore bound in the even arity case}
\label{sec:warmup}

In this section, we will give a proof of the Moore bound for hypergraphs of even arity with the goal of providing an exposition of our main ideas. 
As an illustration of the power of our reweighting idea, in \pref{sec:weak-moore-bound} we will give a simple  proof of the classical Moore bound~\cite{AHL02} that is tight up to an absolute constant factor (as opposed to the $\log^3 n$ loss incurred by the strategy of~\cite{GKM21}).\footnote{In \pref{app:exact-moore-bound}, we present a proof that uses one additional tool to recover the classical Moore bound for irregular graphs with the same leading constant.}
In \pref{sec:even-cover-even}, we will generalize the reweighting idea to prove hypergraph Moore bound for all even arities.
Finally in \pref{sec:odd-arity-overview}, we will discuss the key new idea of \emph{edge deletions} that is crucial for our simpler and tighter proof for the case of odd $k$.

\subsection{Weak Moore bound for graphs}
\label{sec:weak-moore-bound}

In this section, we prove a weak Moore bound for graphs to illustrate our reweighting strategy in a simple setting. The resulting bound is weak in the sense that it incurs a constant factor loss when compared to~\cite{AHL02}. In \pref{app:exact-moore-bound}, we implement this strategy (in a way that is less generalizable to hypergraphs) to recover the tight $2\log_{d-1}n$ bound. 

We note that \cite{GKM21} also proved a weaker Moore bound (Proposition~2.3 of \cite{GKM21}) to illustrate their ``row bucketing" strategy that partitions the vertices into $O(\log n)$ buckets, each of which has vertices with degrees within a multiplicative constant factor of each other. This strategy splits the adjacency matrix $A$ into $O(\log^2 n)$ pieces and ends up requiring an average degree $d \gtrsim \log^3 n$ in order to contain a cycle of length $O(\log n)$. 

This simple exercise will show how our reweighting handles different degrees automatically, avoiding the lossy row bucketing step completely.
\begin{proposition}[Weak Moore bound for irregular graphs]
\label{prop:weak-moore-bound}
    Every graph with $n$ vertices and average degree $d > 16 $ has a cycle of length at most $2\ceil{\log_{(d/16)} n}$.
\end{proposition}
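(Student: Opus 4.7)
The plan is to apply the reweighted-PSD-certificate framework of the introduction to the graph case. Here the ``Kikuchi matrix'' is the adjacency matrix $A = A_G$, and the edge-count surrogate is $\mathbf{1}^\top A \mathbf{1} = 2m = nd$. The goal is to use the girth hypothesis to produce a diagonal reweighting $Q$ with $Q \succeq A$ whose trace contradicts the edge count, since $\mathbf{1}^\top A \mathbf{1} \leq \mathbf{1}^\top Q \mathbf{1} = \tr(Q)$ always holds.

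Set $\ell := \lceil \log_{d/16} n \rceil$ and assume toward contradiction that $G$ has no cycle of length at most $2\ell$. As a preliminary reduction, I would pass to a subgraph $G'$ of minimum degree $\geq d/4$ via iterative deletion of vertices of degree $< d/4$; this discards at most $n(d/4)$ edges, so $G'$ has average degree $\geq d/2$ and the girth is preserved. The point of this pruning is that any remaining degree irregularity will be absorbed by the reweighting rather than requiring the logarithmic row-bucketing of \cite{GKM21}.

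The main spectral step is a trace-moment analysis of the degree-normalized adjacency $\widetilde{A} := D^{-1/2} A D^{-1/2}$ on $G'$, where $D$ is the diagonal degree matrix. Under girth $>2\ell$, the closed walks of length $2\ell$ contributing to $\tr(\widetilde{A}^{2\ell})$ are forced to be backtracking tree walks, and the degree normalization at each edge traversal together with the min-degree bound $\geq d/4$ gives a bound of the form $\tr(\widetilde{A}^{2\ell}) \lesssim n \cdot (C/d)^\ell$ with $C$ a combinatorial constant coming from the Catalan bound $\binom{2\ell}{\ell} \leq 4^{2\ell}$ on Dyck-path shapes. Combined with $n \leq (d/16)^\ell$ from the choice of $\ell$, this forces every non-principal eigenvalue of $\widetilde{A}$ to be strictly bounded below $1$; un-normalizing to $A = D^{1/2}\widetilde{A}D^{1/2}$ and separating off the rank-one top-eigenvector contribution $(D\mathbf{1})(D\mathbf{1})^\top/(2m)$ then produces the diagonal $Q$ with $Q \succeq A$ and $\tr(Q) < 2m$, the desired contradiction. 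The factor $16 = 4 \times 4$ in the base of the logarithm tracks the product of the Catalan factor and the pruning factor.

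The main obstacle is the rank-one top-eigenvector contribution of $\widetilde{A}$, whose eigenvalue $1$ is \emph{not} reduced by the girth assumption and must be bounded separately (for instance via $uu^\top \preceq \|u\|_\infty \cdot \mathrm{diag}(u)$, or by a sharper rank-one-to-diagonal domination). Calibrating the diagonal reweighting $Q$ so that it absorbs this rank-one piece while the strict spectral reduction of the non-principal part of $\widetilde{A}$ provides the required slack is the technical heart of the argument. This is exactly where the reweighting approach pays off relative to \cite{GKM21}: the single normalization $D^{-1/2} \cdot D^{-1/2}$ handles all vertex-degree irregularity uniformly, so no $O(\log n)$ row-bucketing by degree is needed and the argument loses no polylog factor.
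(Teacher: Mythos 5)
Your walk-counting engine is essentially the right one (and close to the paper's: trace of an even power, closed walks in a girth-$>2\ell$ graph must backtrack, at least half the steps are ``old'' and each old step pays an inverse-degree factor). The gap is in how you try to close the argument. With the pure degree normalization $\widetilde{A}=D^{-1/2}AD^{-1/2}$, the Perron eigenvalue is \emph{exactly} $1$ (eigenvector $D^{1/2}\1$) no matter how large the girth is, so no girth hypothesis can force $\Norm{\widetilde{A}}_2<1$, and consequently no trace bound of the form $n\,(C/d)^{\ell}<1$ can actually hold. Worse, the intended conclusion is impossible in principle: for any diagonal $Q\succeq A$ you have $\tr(Q)=\1^{\top}Q\1\ge \1^{\top}A\1=2m$, so ``a diagonal $Q\succeq A$ with $\tr(Q)<2m$'' can never be produced; and your proposed rank-one absorption cannot rescue this, since any diagonal matrix dominating $(D\1)(D\1)^{\top}/(2m)$ alone already has trace at least $\1^{\top}(D\1)(D\1)^{\top}\1/(2m)=2m$. (Applying $Q$ to the pruned graph $G'$ does not help either: then $\tr(Q)<2m$ contradicts nothing, because $\1^{\top}A_{G'}\1=2m'\le 2m$.) So the ``technical heart'' you flagged is not a calibration issue — that route is closed.

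There are two clean ways out. The paper's fix is to change the reweighting from $D$ to $\Gamma=D+d\,\Id$: the additive shift makes each backtracking step pay $1/(d_u+d)\le 1/d$ with \emph{no pruning}, the new-edge steps pay $d_u/(d_u+d)<1$, and — crucially — the norm $\Norm{\Gamma^{-1/2}A\Gamma^{-1/2}}_2$ genuinely drops to about $2n^{1/\ell}/\sqrt{d}\ll 1$ under the girth assumption, while $\tr(\Gamma)=2nd$ is only a factor $2$ larger than $\1^{\top}A\1=nd$; double counting then gives $n^{1/\ell}>\sqrt{d}/4$ and the stated bound. Alternatively, your own computation can be salvaged without any certificate $Q$: on the pruned graph with minimum degree $\ge d/4$, use the Perron eigenvalue as a \emph{lower} bound, $1=\lambda_1(\widetilde{A})^{2\ell}\le\tr(\widetilde{A}^{2\ell})\le n\,(16/d)^{\ell}$, which forces $\ell\le\log_{d/16}n$ and hence a cycle of length $O(\log_{d/16}n)$ — i.e., treat the eigenvalue $1$ as the source of the contradiction rather than as an obstacle to be dominated. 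Either repair yields the proposition; as written, your final step does not.
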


The core of the proof of \pref{prop:weak-moore-bound} is the following spectral norm bound on the reweighted adjacency matrix.

\begin{claim} \label{claim:weak-moore-bound-norm}
    Let $G$ be a graph with $n$ vertices and average degree $d > 1$ that has no cycle of length $\leq \ell$ for some even $\ell\in\N$.
    Let $A$ be the $\{0,1\}$ adjacency matrix of $G$, and let $\Gamma = D+d\Id$ be the diagonal matrix such that $D_{uu} = d_u$ where $d_u$ is the degree of vertex $u$. Then,
    $\Norm{\Gamma^{-1/2} A \Gamma^{-1/2}}_2 < \frac{2n^{1/\ell}}{\sqrt{d}}$.
\end{claim}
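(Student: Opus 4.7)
The plan is the trace moment method. Let $M = \Gamma^{-1/2}A\Gamma^{-1/2}$ and write $\ell = 2q$. Since $\|M\|_2^{\ell} \leq \tr(M^{\ell})$, it suffices to establish $\tr(M^{2q}) < 4^q n / d^q$ and take $\ell$-th roots. Expanding the trace yields a weighted sum over closed walks $v_0 \to v_1 \to \cdots \to v_{2q-1}\to v_0$ in $G$, each weighted by $\prod_i \frac{A_{v_iv_{i+1}}}{\sqrt{(d_{v_i}+d)(d_{v_{i+1}}+d)}}$. The assumption of no cycle of length $\leq \ell$ forces every such walk to traverse only edges of a subtree of $G$: the set of distinct edges used has size at most $q < \ell$, so cannot contain any cycle of $G$.

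Next I would invoke the classical DFS bijection between closed tree walks of length $2q$ in $G$ and pairs $(T, \phi)$, where $T$ is a rooted plane tree with exactly $q$ edges and $\phi : V(T) \to V(G)$ is a graph homomorphism; there are $C_q < 4^q$ such rooted plane trees. Under this bijection the walk weight (with cyclic identification of start/end) equals $\prod_{v \in T} \frac{1}{(d_{\phi(v)}+d)^{\deg_T(v)}}$, which factors across edges of $T$ as $\prod_{(x,y)\in E(T)} \frac{1}{(d_{\phi(x)}+d)(d_{\phi(y)}+d)}$. Hence it suffices to show that for every plane tree $T$ with $q$ edges,
\[
\sum_{\phi : V(T) \to V(G)} \prod_{(x,y)\in E(T)}\frac{1}{(d_{\phi(x)}+d)(d_{\phi(y)}+d)} \;\leq\; \frac{n}{d^q}\,.
\]

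The core estimate above I would prove by a telescoping induction over subtrees. For a subtree $T_c$ rooted at $c$, let $F(T_c, u)$ be the above sum restricted to $\phi$ with $\phi(c) = u$. The claim is $F(T_c, u) \leq d^{-|E(T_c)|}$: the leaf case is trivial, and inductively each child subtree $T_{c'}$ hanging off $c$ contributes a factor
\[
\frac{1}{d_u + d}\sum_{v \sim u}\frac{F(T_{c'}, v)}{d_v + d} \;\leq\; \frac{1}{d_u+d}\cdot\frac{d_u}{d}\cdot \frac{1}{d^{|E(T_{c'})|}} \;\leq\; \frac{1}{d^{1+|E(T_{c'})|}}\,,
\]
using the inductive hypothesis together with the trivial bounds $\sum_{v\sim u}\frac{1}{d_v+d} \leq d_u/d$ and $d_u/(d_u+d) \leq 1$. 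Multiplying the per-child factors and summing $F(T, v_0)$ over $v_0 \in V(G)$ yields the desired $n/d^q$, and combining with the Catalan count $C_q < 4^q$ completes the bound on $\tr(M^{2q})$.

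The main subtlety to verify carefully is the weight identity under the bijection when $\phi$ is not injective (which happens whenever the walk retraces an edge): distinct tree-vertices may map to the same graph-vertex, so one needs $\sum_{v \in T : \phi(v) = u}\deg_T(v)$ to equal the number of times $u$ is visited in the walk. This follows from a short combinatorial check on the DFS encoding; beyond this bookkeeping, no further calculation is needed.
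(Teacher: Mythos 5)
Your proposal is correct and delivers exactly the claimed bound: $C_{\ell/2}<4^{\ell/2}$ rooted plane-tree shapes times the per-tree homomorphism bound $n/d^{\ell/2}$ gives $\mathrm{tr}(M^{\ell})<4^{\ell/2}\,n/d^{\ell/2}$, whose $\ell$-th root is $2n^{1/\ell}/\sqrt{d}$. You share the paper's skeleton --- trace moment method plus the observation that girth $>\ell$ forces every contributing closed walk onto a tree --- but you organize the counting differently: the paper encodes walks sequentially with one bit per step (``new edge'' vs.\ ``old edge''), charging $d_u/(d_u+d)<1$ to new steps and $1/(d_u+d)<1/d$ to the at least $\ell/2$ old steps, whereas you parametrize tree-supported walks by a rooted plane tree together with a homomorphism into $G$, bound the weighted homomorphism count by induction over subtrees, and pay the Catalan number for the shapes. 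Your route is slightly heavier to set up, but it is watertight at the one delicate point of the paper's warm-up encoding: when an edge is traversed more than twice, a step along an already-used edge need not backtrack to the immediately preceding vertex (in the closed walk $1\to2\to3\to4\to5\to4\to3\to2\to1$ on a path, four steps reuse edges but only one returns to the previous vertex), so the rule ``$b_i=1$ forces $u_{i+1}=u_{i-1}$'' does not literally capture all walks; the up/down (DFS-contour) bookkeeping implicit in your plane-tree correspondence, where a ``down'' step is forced to go to the parent, is the standard repair and is what actually yields the clean factor $1/d$ per forced step. What the paper's style buys in exchange is that its encoding carries over verbatim to the Kikuchi-matrix lemmas later in the paper, where an ``old'' step genuinely costs an extra factor of $\ell$, while your subtree induction is tailored to the graph case. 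Two cosmetic points for your write-up: the support of the walk is a tree simply because it has at most $\ell$ distinct edges and any cycle in it would contradict the girth hypothesis (the bound of $\ell/2$ distinct edges is a consequence of the tree structure, not the premise); and the (tree, homomorphism) correspondence only needs to be a weight-preserving injection from tree-supported closed walks into pairs --- summing over all homomorphisms may overcount, which is harmless since all weights are nonnegative.
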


We now complete the proof of \pref{prop:weak-moore-bound}.
\begin{proof}[Proof of \pref{prop:weak-moore-bound} by \pref{claim:weak-moore-bound-norm}]
    Suppose $G$ has no cycle of length $\leq \ell$, then \pref{claim:weak-moore-bound-norm} implies that $A \prec \frac{2n^{1/\ell}}{\sqrt{d}} \Gamma$.
    Then, the quadratic form $\1^\top A \1 < \frac{2n^{1/\ell}}{\sqrt{d}} \tr(\Gamma)$ since $\1^\top \Gamma \1 = \tr(\Gamma)$. By definition, $\1^\top A \1 = nd$ and $\tr(\Gamma) = \sum_{u=1}^n (d_u + d) = 2nd$. Thus, $n^{1/\ell} > \sqrt{d}/4$, and taking logs, we get
    \begin{equation*}
        \frac{1}{\ell}\log n > \frac{1}{2} \log(d/16)
        \Rightarrow \frac{\ell}{2} < \log_{d/16} n \mper
    \end{equation*}
    $\ell$ is even, so we have $\ell < 2\ceil{\log_{d/16} n}$.
    Thus, by the contrapositive, $G$ must contain a cycle of length $2\ceil{\log_{d/16} n}$. This completes the proof.
\end{proof}

We now prove \pref{claim:weak-moore-bound-norm} using the well-known trace moment method, which reduces to counting weighted closed walks in the graph.
In the analysis, we will see exactly how the choice of the reweighting matrix $\Gamma$ accounts for different vertex degrees.

\begin{proof}[Proof of \pref{claim:weak-moore-bound-norm}]
    Let $\wt{A} = \Gamma^{-1/2} A \Gamma^{-1/2}$.
    For even $\ell\in\N$, the trace moment method states that $\norm{\wt{A}}_2^\ell \leq \tr(\wt{A}^\ell) = \tr((\Gamma^{-1}A)^{\ell})$, which is a summation of all (weighted) closed walks of length $\ell$ in $G$.
    Since there is no cycle of length $\leq \ell$, the only closed walks are the ones that \emph{backtrack} to the original vertex, meaning that there can be at most $\ell/2$ ``new'' edges and at least $\ell/2$ ``old'' edges in the walk.
    We encode each closed walk $u_1 \to u_2 \to \cdots \to u_\ell \to u_1$ as follows,
    \begin{itemize}
        \item Choose a starting vertex $u_1 \in [n]$.
        \item One bit $b_i\in \{0,1\}$ at each step $i$ to encode whether this step uses a new edge or an old one.
        \begin{itemize}
            \item If $b_i=0$ (new edge), select one of $u_i$'s neighbors as $u_{i+1}$.
            \item If $b_i=1$ (old edge), we must backtrack to the previous vertex $u_{i-1}$.
        \end{itemize}
    \end{itemize}
    For $b\in\{0,1\}$ and $u\in [n]$, let $N_b(u) \subseteq [n]$ be the possible next steps in the walk from $u$. Then, simply expanding $\tr((\Gamma^{-1} A)^\ell)$, we get
    \begin{equation*}
        \tr((\Gamma^{-1} A)^\ell) = \sum_{b\in\zo^\ell} \sum_{u_1 \in [n]} \sum_{u_2 \in N_{b_1}(u_1)} \Gamma_{u_1u_1}^{-1} \sum_{u_3 \in N_{b_2}(u_2)} \Gamma_{u_2u_2}^{-1} \cdots \sum_{u_{\ell+1} \in N_{b_{\ell}}(u_{\ell})} \Gamma_{u_{\ell}u_{\ell}}^{-1}
        \cdot \1(u_{\ell+1} = u_1) \mper
    \end{equation*}
    As we can see, each step $u_i\to u_{i+1}$ gets a factor $\Gamma_{u_iu_i}^{-1} = \frac{1}{d_{u_i}+d}$.
    We can now bound the above by observing that if $b_i=0$ (new edge), then $|N_0(u_i)| \leq d_{u_i}$ and
    \begin{equation*}
        \sum_{u_{i+1}\in N_0(u_i)} \Gamma_{u_i u_i}^{-1} \leq \frac{d_{u_i}}{d_{u_i}+d} < 1 \mcom
    \end{equation*}
    and if $b_i = 0$ (old edge), then $|N_1(u_i)| = 1$ (the previous step) and
    \begin{equation*}
        \sum_{u_{i+1}\in N_1(u_i)} \Gamma_{u_i u_i}^{-1} \leq \frac{1}{d_{u_i}+d} < \frac{1}{d} \mper
    \end{equation*}
    Finally, considering $b\in\zo^{\ell}$, $u_1\in [n]$, and there are at least $\ell/2$ old edges, we have
    \begin{equation*}
        \tr((\Gamma^{-1} A)^\ell) < 2^\ell n \Paren{\frac{1}{d}}^{\ell/2} \mcom
    \end{equation*}
    and taking the $\ell$-th root completes the proof.
\end{proof}

\subsection{The case of even arity hypergraphs}
\label{sec:even-cover-even}

In this section, we prove the existence of small even covers in even arity hypergraphs.

\begin{theorem}[\pref{thm:main-thm}, even $k$]
\label{thm:even-arity}
    For even $k\in \N$ and any $r \in \N$ with $k \leq r \leq n/8$, any $k$-uniform hypergraph $\calH$ with $n$ vertices and $m \geq 128 n\log n \cdot (\frac{n}{r})^{k/2-1}$ hyperedges has an even cover of size at most $\ceil{r \log_2 n}+1$.
\end{theorem}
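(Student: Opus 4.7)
The plan is to lift the reweighted Kikuchi matrix strategy of \pref{sec:weak-moore-bound} to the even-arity setting. Choose level $\ell := r/2$ (taking $r$ even for clarity) and define
\begin{equation*}
A_{\calH} \in \R^{\binom{[n]}{\ell} \times \binom{[n]}{\ell}}, \qquad A_{\calH}(S,T) := \1[S \triangle T \in \calH] \mper
\end{equation*}
Summing each $C \in \calH$'s contribution across the $\binom{k}{k/2}\binom{n-k}{\ell-k/2}$ pairs with $S \triangle T = C$ yields $\1^\top A_{\calH} \1 = m \binom{k}{k/2}\binom{n-k}{\ell-k/2}$. Let $d_S = |\{C \in \calH : |S \cap C| = k/2\}|$ be the row sum and $\bar d := \1^\top A_{\calH} \1/\binom{n}{\ell}$ its average; since $\binom{n-k}{\ell-k/2}/\binom{n}{\ell} \asymp (\ell/n)^{k/2}$, the density hypothesis gives $\bar d \gtrsim r \log n$, with the constant $128$ supplying the slack needed below. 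As in the warm-up, introduce the reweighting $\Gamma := D + \bar d \cdot \Id$ with $D$ the diagonal matrix of row sums, so that $\1^\top \Gamma \1 = 2 \cdot \1^\top A_{\calH} \1$.

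The core of the argument is to show that under the contrapositive assumption that $\calH$ has no even cover of size $\leq L := \lceil r \log_2 n \rceil + 1$, the normalized operator $\wt A_{\calH} := \Gamma^{-1/2} A_{\calH} \Gamma^{-1/2}$ satisfies $\Norm{\wt A_{\calH}}_2 < 1/2$. Since the Rayleigh quotient of $\wt A_{\calH}$ at $\Gamma^{1/2}\1$ equals $\1^\top A_{\calH}\1/\1^\top \Gamma \1 = 1/2$, this immediately contradicts $1/2 \leq \Norm{\wt A_{\calH}}_2$, forcing a short even cover to exist.

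To prove the spectral bound, I apply the trace moment method at an even power $L' \leq L$. The trace $\tr((\Gamma^{-1} A_{\calH})^{L'})$ expands as a weighted sum over closed walks $(S_0, C_1, \ldots, C_{L'})$ in the Kikuchi graph with $S_i = S_{i-1} \triangle C_i$ and $C_1 \oplus \cdots \oplus C_{L'} = \emptyset$. The no-even-cover hypothesis forces every distinct hyperedge in the walk to have even multiplicity, since otherwise the odd-multiplicity hyperedges would form an even cover of size $\leq L'$. Encode each such walk by $(S_0, M, \vec C)$, where $M$ is a perfect matching on the $L'$ positions pairing same-hyperedge occurrences and $\vec C$ records the hyperedge at each \emph{earlier} position of $M$. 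An earlier position sums $\Gamma_{S_{i-1}}^{-1}$ against the $\leq d_{S_{i-1}}$ valid hyperedges to contribute at most $d_{S_{i-1}}/(d_{S_{i-1}}+\bar d) \leq 1$, while a later position has its hyperedge forced by $M$ and contributes $\Gamma_{S_{i-1}}^{-1} \leq 1/\bar d$. Summing over $\binom{n}{\ell}$ starting vertices and the $(L'-1)!! \leq (L')^{L'/2}$ matchings yields
\begin{equation*}
\tr\bigl((\Gamma^{-1} A_{\calH})^{L'}\bigr) \;\leq\; (L')^{L'/2} \cdot \binom{n}{\ell} \cdot \bar d^{-L'/2} \mper
\end{equation*}
Taking $L'$-th roots and using $\binom{n}{\ell}^{1/L'} \leq n^{\ell/L'} = n^{1/(2\log_2 n)} = \sqrt{2}$ for $L' \asymp r \log_2 n$ gives $\Norm{\wt A_{\calH}}_2 \leq \sqrt{2 L'/\bar d}$; the inequality $\bar d \geq 8 L'$, which the density constant $128$ guarantees, then delivers $\Norm{\wt A_{\calH}}_2 < 1/2$.

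The main obstacle I anticipate is the walk-encoding step. In the graph warm-up, the no-short-cycle assumption forces closed walks to be tree walks, so the ``new-or-backtrack'' bit sequence together with the starting vertex is an unambiguous encoding. In the Kikuchi graph, however, all-even-multiplicity closed walks need not be tree walks: for example, a walk $C_1 C_2 C_1 C_2$ with distinct $C_1, C_2 \in \calH$ traces a genuine $4$-cycle and corresponds to a \emph{crossing} matching that no LIFO/stack encoding captures. This is why $M$ must range over all $(L'-1)!!$ perfect matchings rather than only the non-crossing ones, and the resulting super-exponential blow-up in the matching count is precisely what forces the requirement $\bar d \gtrsim L'$. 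The constant $128$ in the density hypothesis is calibrated to supply exactly this slack uniformly in $k \geq 2$; verifying the $\bar d$ and $\binom{n}{\ell}$ estimates over the full parameter range is the remaining routine bookkeeping.
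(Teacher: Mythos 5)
Your proposal is correct in substance and follows essentially the same route as the paper: the reweighted Kikuchi matrix $\Gamma = D + \bar d\,\Id$, the trace moment method over closed walks whose hyperedges are forced to appear with even multiplicity by the no-short-even-cover assumption, and the final double counting of $\1^\top A \1$ against $\tr(\Gamma)$ (your Rayleigh-quotient phrasing at $\Gamma^{1/2}\1$ is the same inequality). The only deviations are cosmetic --- you work at level $r/2$ rather than $r$ and encode walks via perfect matchings on positions rather than a new/old bit plus a pointer to a previous hyperedge --- and these only shift constants; just note that the level-$r/2$ choice costs a $2^{-k/2}$ in the average-degree estimate that $\binom{k}{k/2}$ only partly repays when $r$ is close to $k$, so the stated constant $128$ leaves you less slack there than the paper's level-$r$ computation.
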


The proof is simple and almost identical to the proof of the weak Moore bound (\pref{prop:weak-moore-bound}) but with $A$ being the adjacency matrix of the \emph{Kikuchi graph} which we define below.

\begin{definition}[Kikuchi graph]
\label{def:even-kikuchi-graph}
    Let $\cH$ be a $k$-uniform hypergraph on vertex set $[n]$ for even $k$. For an integer parameter $r$, define the \emph{Kikuchi graph} $K_{r}$ associated to $\cH$ is a graph on vertex set $\binom{[n]}{r}$ such that a pair of vertices $S, T \in \binom{[n]}{r}$ have an edge between them if the symmetric difference $S \oplus T \in \cH$. For such an edge, we write $S \xleftrightarrow{C} T$ and think of the edge as ``colored'' by $C \in \cH$ where $C = S \oplus T$. 
    We call the adjacency matrix $A$ of $K_r$ the \emph{Kikuchi matrix.}
\end{definition}

The key insight of~\cite{GKM21} (and also our starting point) is relating even covers in $\cH$ to cycles in the associated Kikuchi graph. For sets $R_1, R_2, \ldots, R_\ell \subseteq [n]$ let $\oplus_{i \leq \ell} R_i$ denote the set of elements of $[n]$ that appear in an odd number of $R_i$s (i.e., the sum modulo $2$ of the indicator vectors of $R_i$s).
  
\begin{observation}[Closed walks in the Kikuchi graph]
\label{obs:closed-walks}
Let $\cH$ be a $k$-uniform hypergraph on $[n]$ for even $k$ and let $S_1 \rightarrow S_2 \rightarrow \cdots S_\ell \rightarrow S_1$ be a closed walk on vertices in $K_r$ such that for every $i \leq \ell$, $S_i \xleftrightarrow{C_i} S_{i+1}$ for $C_1, C_2, \ldots, C_{\ell} \in \cH$ (denoting $S_{\ell+1} = S_1$). Then, $\oplus_{i \leq \ell} C_i = 0$. Further, if $\cH$ has no even cover of length $\ell$, then every hyperedge in $\cH$ appears an even number of times in the multiset $\{C_1, C_2, \ldots, C_\ell\}$. We will call such walks in $K_{r}$ \emph{trivial}.
\end{observation}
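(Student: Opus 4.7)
The plan is to unpack the Kikuchi edge coloring into $\mathbb{F}_2$ arithmetic and telescope. By the very definition of the Kikuchi graph (\pref{def:even-kikuchi-graph}), the edge $S_i \xleftrightarrow{C_i} S_{i+1}$ means precisely that $C_i = S_i \oplus S_{i+1}$, i.e.\ that the indicator vectors satisfy $\1_{C_i} = \1_{S_i} + \1_{S_{i+1}} \pmod 2$. Summing over the closed walk, with the convention $S_{\ell+1} = S_1$, every label $S_j$ appears in exactly two consecutive summands (namely $C_{j-1}$ and $C_j$), so the telescoping sum $\oplus_{i=1}^{\ell}(S_i \oplus S_{i+1})$ collapses to the empty set. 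This gives the first conclusion $\oplus_{i=1}^\ell C_i = 0$.

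For the second claim, the key move is to package multiplicities by parity. Let $J \subseteq \cH$ denote the set of \emph{distinct} hyperedges that appear an odd number of times in the multiset $\{C_1, \ldots, C_\ell\}$. Because pairs of identical contributions cancel modulo $2$, we have
\[
    \bigoplus_{C \in J} C \;=\; \bigoplus_{i=1}^\ell C_i \;=\; 0,
\]
so by definition $J$ is an even cover contained in $\cH$, and evidently $|J| \leq \ell$. Under the hypothesis that $\cH$ contains no nonempty even cover of length at most $\ell$, we must therefore have $J = \emptyset$; this is exactly the statement that every hyperedge of $\cH$ appears an even number of times in the multiset $\{C_1, \ldots, C_\ell\}$.

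I do not anticipate any real technical obstacle here: the observation is essentially a translation between the definition of an even cover as an $\mathbb{F}_2$-dependence among hyperedges and the telescoping of vertex labels along a closed walk in $K_r$. The only mildly nontrivial step is the parity-collapse to the subset $J$, which is a standard $\mathbb{F}_2$ cancellation trick but is precisely what converts the ``multiset with even total multiplicity'' statement into an even-cover statement.
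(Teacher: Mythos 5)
Your proof is correct and follows essentially the same route as the paper: telescoping $C_i = S_i \oplus S_{i+1}$ around the closed walk for the first claim, and reducing the multiset to the distinct hyperedges of odd multiplicity (the paper phrases this as removing even-multiplicity hyperedges in pairs) to produce a would-be even cover of size at most $\ell$ for the second. No gaps to report.
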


\begin{proof}
Note that $S_{i} \oplus S_{i+1} = C_i$ for every $i \leq \ell$. If we add both sides of all $\ell$ such equalities then each $S_i$ occurs in exactly two of the equations so the LHS must be $0$. Thus, $\oplus_{i \leq \ell} C_i = 0$.

Next, we repeatedly remove hyperedges that occur an even number of times in the multiset $\{C_1, C_2, \ldots, C_\ell\}$ to obtain a collection of $\ell' \leq \ell$ distinct hyperedges of $\cH$. The sum (modulo 2) of the remaining hyperedge should still be $0$ as we removed hyperedges in pairs. The resulting $\ell'$ must be $0$ as otherwise the remaining hyperedges form an even cover of length $\ell' \leq \ell$. 
\end{proof}

Consider a hypergraph $\calH$ with $n$ vertices and $m$ hyperedges, and its associated Kikuchi graph $(V,E)$ with parameter $r$.
Each $C\in \calH$ introduces $\frac{1}{2}\binom{k}{k/2} \binom{n-k}{r-k/2}$ edges in the Kikuchi graph (select $k/2$ vertices from $C$ and select $r-k/2$ vertices from $[n]\setminus C$ to complete $S$), thus the total edges $|E| = \frac{1}{2}\binom{k}{k/2} \binom{n-k}{r-k/2} \cdot m$.
Let $d_S$ be the degree of $S\in V$, and let $d$ denote the average degree, then simple calculations show that
\begin{equation} \label{eqn:even-average-degree}
    d = \frac{\binom{k}{k/2} \binom{n-k}{r-k/2} m}{\binom{n}{r}}
    \geq \Paren{\frac{r}{n}}^{k/2} m \cdot \binom{k}{k/2} \Paren{1-\frac{2r}{n}}^{k/2} \Paren{1 - \frac{k}{2r}}^{k/2}
    \geq \frac{1}{2}\Paren{\frac{r}{n}}^{k/2} m
\end{equation}
when $k \leq r \leq n/8$.

We will follow the reweighting strategy with $\Gamma = D + d \Id$ to bound the spectral norm of the reweighted Kikuchi matrix.
The following lemma is analogous to \pref{claim:weak-moore-bound-norm}.

\begin{lemma} \label{lem:even-kikuchi-norm-bound}
    Let $k,r,n\in \N$ such that $k\leq r \leq n$, and let $\ell \in \N$ be even.
    Let $A$ be the Kikuchi matrix with parameter $r$ of a $k$-uniform hypergraph $\calH$ on $n$ vertices, and let $\Gamma = D + d\Id$ where $D$ is the degree matrix and $d$ is the average degree of the Kikuchi graph.
    Suppose there is no even cover of size at most $\ell$ in $\calH$, then
    \begin{equation*}
        \Norm{\Gamma^{-1/2}A \Gamma^{-1/2}}_2 < 2n^{r/\ell}\sqrt{\frac{\ell}{d}} \mper
    \end{equation*}
\end{lemma}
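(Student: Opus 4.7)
The plan is to proceed in direct analogy with the proof of \pref{claim:weak-moore-bound-norm}, replacing the adjacency matrix of a graph by the Kikuchi matrix $A$ and replacing the girth hypothesis by the absence of a short even cover in $\calH$. First, for even $\ell$ I would apply the trace moment method:
\begin{equation*}
\Norm{\Gamma^{-1/2} A \Gamma^{-1/2}}_2^{\ell} \;\leq\; \tr\bigl((\Gamma^{-1} A)^{\ell}\bigr),
\end{equation*}
and expand the right-hand side as a sum over closed walks $S_1 \to S_2 \to \cdots \to S_\ell \to S_1$ in the Kikuchi graph $K_r$, where each walk is weighted by $\prod_{i=1}^{\ell} 1/(d_{S_i}+d)$ coming from the diagonal factors of $\Gamma^{-1}$.

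By \pref{obs:closed-walks}, since $\calH$ has no even cover of size $\leq \ell$, every such closed walk is \emph{trivial}: each color $C \in \calH$ appears an even number of times in the sequence $(C_1,\dots,C_\ell)$ of edge-colors, so at most $\ell/2$ distinct hyperedges are used in the walk. I would encode each trivial walk by (i) the starting vertex $S_1 \in \binom{[n]}{r}$, (ii) at each step $i$, a bit $b_i \in \zo$ indicating whether $C_i$ is a \emph{first occurrence} of a color or a \emph{repeat} of a previously seen color, (iii) for a first-occurrence step, a choice of hyperedge $C_i \in \calH$ incident to $S_i$ (at most $d_{S_i}$ options), and (iv) for a repeat step, one of the at most $\ell/2$ previously seen colors.

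The choice of reweighting $\Gamma = D + d\,\Id$ is precisely what kills the degree dependence: a first-occurrence step contributes at most $d_{S_i}/(d_{S_i}+d) \leq 1$, uniformly in $S_i$, while a repeat step contributes at most $\ell/\bigl(2(d_{S_i}+d)\bigr) \leq \ell/(2d)$, again independent of $S_i$. Since a trivial walk of length $\ell$ has at most $\ell/2$ first-occurrence steps and hence at least $\ell/2$ repeat steps, summing over all encodings would yield
\begin{equation*}
\tr\bigl((\Gamma^{-1}A)^{\ell}\bigr) \;\leq\; \binom{n}{r} \cdot 2^{\ell} \cdot \Paren{\frac{\ell}{2d}}^{\ell/2},
\end{equation*}
and combining $\binom{n}{r} \leq n^r$ with taking the $\ell$-th root would give the claimed bound of $2n^{r/\ell}\sqrt{\ell/d}$, with a small amount of slack.

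The main subtlety I anticipate is handling walks in which some color appears strictly more than twice. There, the number of first-occurrence steps $s$ is strictly less than $\ell/2$ and the number of repeat steps $\ell-s$ is strictly greater than $\ell/2$, so the exponent on $\ell/(2d)$ grows; provided $d$ is large enough that $\ell/(2d) \leq 1$ (which holds in the regime of interest, since $r \leq n/8$ forces $d$ to be at least polynomial in $n$ well beyond $\ell$), the bound only tightens and the worst case is the ``each color used twice'' regime used above. Beyond this mild case analysis, and the bookkeeping needed to verify that the proposed encoding indeed injects trivial walks into its codomain, the argument is a clean transcription of the graph case: $\binom{n}{r}$ plays the role of the vertex count and the extra factor $(\ell/2)^{\ell/2}$ comes entirely from the color-pairing structure that triviality imposes.
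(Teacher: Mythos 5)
Your proposal is essentially the paper's own proof: trace moment method on $(\Gamma^{-1}A)^\ell$, closed walks interpreted via \pref{obs:closed-walks} as using every hyperedge an even number of times, the same new/old (first-occurrence/repeat) bit encoding, the new steps absorbed by $d_{S_i}/(d_{S_i}+d)\le 1$ and the at least $\ell/2$ repeat steps each contributing $O(\ell)/(d_{S_i}+d)\le O(\ell)/d$, giving $\tr((\Gamma^{-1}A)^\ell)\le 2^\ell\binom{n}{r}(\ell/d)^{\ell/2}$ and the stated bound after taking $\ell$-th roots. The one loose end is your handling of the case $\ell/(2d)>1$: the lemma carries no density hypothesis on $\calH$, so $r\le n/8$ does \emph{not} force $d\gg\ell$ (the hypergraph may have very few hyperedges), and your appeal to ``the regime of interest'' is not licensed by the statement. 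The paper closes this exactly where you anticipated the subtlety, with a one-line remark: if $\ell>d$, bound every step's contribution by $1$ (treat all steps as new), giving $\tr\le 2^\ell n^r$ and hence $\Norm{\Gamma^{-1/2}A\Gamma^{-1/2}}_2\le 2n^{r/\ell}\le 2n^{r/\ell}\sqrt{\ell/d}$ since $\ell/d\ge 1$; with that substitution your argument is complete.
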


We can immediately complete the proof of \pref{thm:even-arity}.

\begin{proof}[Proof of \pref{thm:even-arity} by \pref{lem:even-kikuchi-norm-bound}]
    Suppose that there is no even cover of size $\leq \ell \coloneqq \ceil{r\log_2 n}$ (assume this is even, otherwise add 1).
    Then, $n^{r/\ell}\leq 2$ and \pref{lem:even-kikuchi-norm-bound} states that the Kikuchi graph $(V,E)$ satisfies $A \prec 4\sqrt{\ell/d} \cdot \Gamma$ where $\Gamma = D + d\Id$.
    Then,
    \begin{equation*}
        \1^\top A \1 < 4\sqrt{\frac{\ell}{d}} \cdot \tr(\Gamma)
        = 4\sqrt{\frac{\ell}{d}} \cdot \sum_{S\in V} (d_S + d)
        = 8\sqrt{\frac{\ell}{d}} \cdot |V| d \mper
    \end{equation*}
    On the other hand, $\1^\top A \1 = 2|E| = |V| d$.
    Thus, we have $d < 64\ell$.
    By \pref{eqn:even-average-degree} we have $d \geq \frac{1}{2}(\frac{r}{n})^{k/2} m$ when $k \leq r \leq n/8$.
    Thus, if there is no even cover of size $\leq \ell$, then $m < 128n\log n \cdot (\frac{n}{r})^{k/2-1}$, completing the proof.
\end{proof}

Now, we prove \pref{lem:even-kikuchi-norm-bound} by counting weighted closed walks in the Kikuchi graph, essentially the same way we prove \pref{claim:weak-moore-bound-norm}.

\begin{proof}[Proof of \pref{lem:even-kikuchi-norm-bound}]
    Let $\wt{A} = \Gamma^{-1/2}A\Gamma^{-1/2}$.
    We use the trace power method:
    \begin{equation*}
        \|\wt{A}\|_2^\ell \leq \tr(\wt{A}^\ell) = \tr( (\Gamma^{-1} A)^\ell) \mper
    \end{equation*}
    We upper bound $\tr((\Gamma^{-1} A)^\ell)$ by counting (weighted) closed walks of length $\ell$ in the Kikuchi graph.
    Note that each edge $(S,T)$ of the Kikuchi graph corresponds to a hyperedge $S \oplus T \in \calH$.
    Since there is no even covers of size at most $\ell$, any closed walk must contain an even number of each hyperedge in $\calH$.

    We can encode a closed walk $S_1 \rightarrow S_2 \rightarrow \cdots \rightarrow S_{\ell} \rightarrow S_1$ as follows:
    \begin{itemize}
        \item Choose a starting vertex $S_1 \in V$.
        \item One bit $b_i \in \zo$ at each step $i$ to encode whether this step uses a new hyperedge or an old one.
        \begin{itemize}
            \item If $b_i = 0$ (new hyperedge), select one of $S_i$'s neighbors as $S_{i+1}$.
            \item If $b_i = 1$ (old hyperedge), select an old hyperedge $C$ from the previous steps, and set $S_{i+1} = S_i \oplus C$.
        \end{itemize}
    \end{itemize}
    Note that there are at most $\ell/2$ new hyperedges and at least $\ell/2$ old hyperedges since each hyperedge must occur an even number of times.
    For $b \in \zo$ and $S\in V$, let $N_b(S) \subseteq V$ be the possible next steps in the walk from $S$ (according to $b$).
    Each step $S_i \rightarrow S_{i+1}$ gets a factor $(\Gamma^{-1}A)_{S_i, S_{i+1}} = \Gamma_{S_i,S_i}^{-1} = \frac{1}{d_{S_i}+d}$.
    Thus,
    \begin{equation*}
        \tr((\Gamma^{-1} A)^\ell) = \sum_{b\in\zo^\ell} \sum_{S_1 \in V} \sum_{S_2 \in N_{b_1}(S_1)} \frac{1}{d_{S_1} + d} \sum_{S_3 \in N_{b_2}(S_2)} \frac{1}{d_{S_2} + d} \cdots \sum_{S_{\ell+1} \in N_{b_{\ell}}(S_{\ell})} \frac{\1(S_{\ell+1} = S_1)}{d_{S_{\ell}} + d} \mper
    \end{equation*}

    We can upper bound the above as follows.
    If $b = 0$, then $|N_0(S_i)| \leq d_{S_i}$ and $\sum_{S_{i+1} \in N_0(S_i)} \Gamma_{S_iS_i}^{-1} \leq \frac{d_{S_i}}{d_{S_i} + d} < 1$.
    If $b = 1$, then $|N_1(S_i)| \leq \ell$ as there are only $\ell$ options to choose one of the previous steps, and $\sum_{S_{i+1} \in N_1(S_i)} \Gamma_{S_iS_i}^{-1} \leq \frac{\ell}{d_{S_i} + d} < \frac{\ell}{d}$.
    Furthermore, we can assume that $\ell \leq d$, otherwise we can simply treat all steps as new hyperedges.

    Finally, $b\in\{0,1\}^\ell$, there are $|V| = \binom{n}{r}$ choices for the starting vertex $S_1$, and there are at least $\ell/2$ old hyperedges.
    Thus, we have
    \begin{equation*}
        \tr((\Gamma^{-1} A)^\ell) < 2^\ell \binom{n}{r} \Paren{\frac{\ell}{d}}^{\ell/2}
        \leq 2^\ell n^r \Paren{\frac{\ell}{d}}^{\ell/2} \mper
    \end{equation*}
    Taking the $\ell$-th root completes the proof.
\end{proof}
\label{sec:overview-hypergraph-mb}

\paragraph{Summary.} As the above short and simple arguments illustrate, reweighted Kikuchi matrix appears to be a clean and simple way to handle irregularities in the degree of graphs (Kikuchi or otherwise) in spectral double counting. For $k>2$, we suspect that the extra $\log n$ factor incurred in our analysis can likely be removed by a better counting of the weighted closed walks.

\subsection{Overview of the odd arity case}
\label{sec:odd-arity-overview}

As in many previous works on refuting constraint satisfaction problems, the odd arity case requires significantly more work. Indeed, even the definition of the Kikuchi graph (\pref{def:even-kikuchi-graph}) only makes sense when $k$ is even.  We present the proof of the odd arity case in \pref{sec:even-cover-odd}, and here we outline some of our key ideas.

\paragraph{Bipartite hypergraph.}
The main insight is to transform the hypergraph $\calH$ to a ``bipartite'' hypergraph (this abstraction is closely related to the Cauchy-Schwarz trick in the context of odd-arity CSP refutation).
First, we partition the hyperedges of $\calH$ into $\calH_1,\dots,\calH_p$ such that for each $\calH_i$, all hyperedges in $\calH_i$ contains a ``center'' vertex $u_i\in [n]$.
We denote $\wt{\calH}_i$ to be $\{C\setminus \{u_i\}: C\in \calH_i\}$, i.e.\ removing the center vertex, and denote $\wt{C} \coloneqq C\setminus \{u_i\}$.
Then, we construct a $2(k-1)$-uniform hypergraph as follows: for each $i\in[p]$ and each distinct pair $C,C'\in \calH_i$, we add a hyperedge $\wt{C}\oplus \wt{C}'$ (let's assume $\wt{C},\wt{C}'$ are disjoint for now).

Let's make some quick calculations.
Suppose that $\calH$ has $m$ hyperedges, and suppose that there are roughly $p\approx n$ partitions and each partition size is $\approx m/n$.
Then, the new $2(k-1)$-uniform hypergraphs will contain roughly $n\cdot (m/n)^2 = \frac{m^2}{n}$ hyperedges.
Now, since $2(k-1)$ is even, we can apply our bound for the even case: there is an even cover of size $r\log n$ when $\frac{m^2}{n} \geq \wt{O}(n) (\frac{n}{r})^{(k-1)-1}$, meaning $m \geq \wt{O}(n) (\frac{n}{r})^{\frac{k}{2}-1}$, the correct bound!

The issue is that this new hypergraph has small even covers for trivial reasons: any 3 pairs $(C_1, C_2)$, $(C_2,C_3)$ and $(C_3,C_1)$ from $\calH_i$ form an even cover of size 3.
Nevertheless, we can proceed to analyze the Kikuchi matrix of the new hypergraph (\pref{def:odd-kikuchi-graph}), assuming that there is no small even cover in the \emph{original} hypergraph.
Note that now an edge $(S,T)$ is associated with 2 hyperedges $C,C'$ from the same $\calH_i$, which we denote as $S \xleftrightarrow{C,C'} T$.
Assuming that there is no even cover of size $\leq 2\ell$, we bound the number of ``trivial'' closed walks where each hyperedge is used an even number of times.

\paragraph{Encoding a closed walk.}
The standard technique of bounding counts of closed walks in the trace moment method is to give a small \emph{encoding} of a walk.
In our case, in a length-$\ell$ closed walk of the Kikuchi graph, each step is associated with two hyperedges, and we have two types of steps:
\begin{enumerate}
    \item a step using 2 \emph{new} hyperedges, and
    \item a step using at least 1 \emph{old} hyperedge.
\end{enumerate}
The first type is bounded exactly the same way as the even arity case by our weight matrix $\Gamma$, the trouble is the second type: while we can easily encode one edge in the step, we need too many bits to encode the other edge.

\paragraph{Deleting bad edges of the Kikuchi graph.}
The main insight is that in the end, we only care about bounding $\1^\top A \1$.
Again, let $d$ be the average degree of the Kikuchi graph, $A\in \R^{N\times N}$ be the Kikuchi matrix, and $\Gamma = D + d\Id$ be our diagonal weight matrix.
If we delete (say) half of the edges of the Kikuchi graph such that we have $\|\Gamma^{-1/2}A' \Gamma^{-1/2}\|_2 \leq \lambda(d)$, where $A'$ is the modified Kikuchi matrix, for some ``good enough'' $\lambda(d)$, then we will have $\frac{Nd}{2} \leq \1^\top A' \1 \leq \lambda \tr(\Gamma) = \lambda \cdot 2Nd$, essentially only losing a constant factor in the density.

We define an appropriate edge deletion process, prove that the fraction of edges removed is small (\pref{claim:few-deletions}), and show that the resulting subgraph has combinatorial properties that let us encode steps of the second type efficiently (\pref{lem:good-subgraph-bound}).


\paragraph{Improving the row pruning step of \cite{GKM21}.}
The analysis of~\cite{GKM21} also requires reducing the Kikuchi graph to obtain certain combinatorial properties.
However, instead of deleting ``bad'' edges, they delete ``bad'' vertices, which they defined as vertices that are bad for \emph{some} $i$ in the bipartite hypergraph (they call this row pruning as each row of the Kikuchi matrix corresponds to a vertex).
Crucially, doing so requires a union bound over $i$, hence they need a strong bound on fraction of bad vertices for each $i$.
Furthermore, they proved their bound using tail inequalities for low-degree polynomials by Schudy and Sviridenko~\cite{SS12}, which is a powerful black-box concentration inequality but loses log factors and requires involved analysis.
All this combined with the row bucketing step introduces several log factors.

\paragraph{Hyperedges with large intersections.}
It turns out that the fraction of bad edges highly depends on \emph{large intersections} of hyperedges in $\calH$.
To bound the fraction of edges deleted, we require our hypergraph to be somewhat ``regular'' -- that is, no small subset appears in more than an appropriately chosen threshold of hyperedges in $\cH$. To this end, we invoke the hypergraph regularity decomposition of~\cite{GKM21} (with more transparently chosen thresholds that do not involve carefully chosen logarithmic factors) to decompose the hypergraph into at most $k$ subhypergraphs such that each piece satisfies the required regularity conditions (see \pref{alg:hypergraph-decomp-cover} and \pref{obs:small-big-intersect} \& \ref{obs:p-H-bounds}).
Then, there must be one subhypergraph $\calH^{(i)}$ with at least $m/k$ hyperedges, and we will show that there exists an even cover within $\calH^{(i)}$. 

\section{Hypergraph Moore bound for odd arity hypergraphs}
\label{sec:even-cover-odd}

In this section we prove the hypergraph Moore bound for $k$-uniform hypergraphs when $k$ is odd.
\begin{theorem}[\pref{thm:main-thm}, odd $k$]
\label{thm:odd-arity}
    There is a universal constant $B$ such that for any odd $k\in\bbN$, and any $r\in\bbN$ satisfying $2k\le r\le \frac{n}{B^k}$, any $k$-uniform hypergraph $\calH$ with $n$ vertices and $m\ge B^k n\log n\cdot\parens*{\frac{n}{r}}^{k/2-1}$ hyperedges has an even cover of size at most $r\log_2 n$.
\end{theorem}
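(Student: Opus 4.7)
The plan is to adapt the reweighted Kikuchi matrix strategy from the even arity case via a ``Cauchy--Schwarz''-style reduction that converts the odd-arity hypergraph $\calH$ into an effectively $2(k-1)$-uniform bipartite object. First I would partition the hyperedges of $\calH$ by selecting a ``center'' vertex in each, yielding groups $\calH_1,\dots,\calH_p$ where every $C\in\calH_i$ contains vertex $u_i$; setting $\wt{C} \coloneqq C\setminus\{u_i\}$, define a Kikuchi graph on vertex set $\binom{[n]}{r}$ in which two sets $S,T$ are joined (with the edge labeled by an unordered pair $\{C,C'\}\subseteq\calH_i$) whenever $S\oplus T = \wt{C}\oplus\wt{C'}$ for some $i$ and some distinct $C,C'\in\calH_i$. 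The key point is that a closed walk in this graph records a multiset of hyperedges of $\calH$ summing to $0$ modulo $2$; assuming $\calH$ has no even cover of size at most $2\ell$, every hyperedge of $\calH$ appearing as a label along a length-$\ell$ closed walk must appear an even number of times.

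Given this setup I would set $\Gamma = D + d\,\Id$ as in the even case and bound $\Norm{\Gamma^{-1/2}A\Gamma^{-1/2}}_2^\ell \le \tr\bigl((\Gamma^{-1}A)^\ell\bigr)$ by encoding closed walks step-by-step. Each step carries \emph{two} hyperedge labels, so at each step I would use one bit per label to mark ``new'' versus ``old''. A new/new step contributes a factor bounded by $d_{S_i}/(d_{S_i}+d) < 1$ exactly as before. For steps with at least one old label, one of the old labels can be specified by pointing to its earlier occurrence at cost $\ell$; the $1/(d_{S_i}+d)$ weight then pays for this as $\ell/d$, mirroring the even-arity computation. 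The main obstacle, however, is the \emph{second} label in such a step, since it can range over all hyperedges in the group $\calH_i$ of the pointed-to label, which is too many if the partition pieces are large.

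To overcome this obstacle I would introduce the edge deletion step outlined in \pref{sec:odd-arity-overview}. For each edge of the Kikuchi graph and each attached pair of labels, flag it as \emph{bad} if the pair $\{C,C'\}$ shares an unusually large common intersection with many other hyperedges in $\calH_i$; delete every bad edge to obtain a submatrix $A'$. The hope, made precise in a claim analogous to \pref{claim:few-deletions}, is to show that at most (say) half of all Kikuchi edges are deleted, so $\1^\top A'\1 \gtrsim Nd$. I would then prove a counterpart of \pref{lem:good-subgraph-bound} asserting that in the surviving subgraph, whenever an old-label is reused, the number of completions to a second label is bounded by a small quantity (essentially, the regularity threshold), which then fits into the walk-encoding budget and yields $\Norm{\Gamma^{-1/2}A'\Gamma^{-1/2}}_2 \lesssim n^{r/\ell}\sqrt{\ell/d}$.

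The deletion estimate is only small when $\calH$ is ``regular'' in the sense that no small subset of vertices lies in too many hyperedges, so as a preprocessing step I would invoke the hypergraph regularity decomposition of \cite{GKM21} (with cleanly chosen thresholds) to split $\calH$ into $O(k)$ subhypergraphs, each satisfying the required co-degree bounds, and pass to the largest piece $\calH^{(i)}$ with at least $m/k$ hyperedges. Running the full argument inside $\calH^{(i)}$ and comparing $\1^\top A'\1 \gtrsim Nd$ against $\1^\top A'\1 \le \Norm{\Gamma^{-1/2}A'\Gamma^{-1/2}}_2 \cdot \tr(\Gamma) = O(\lambda N d)$, then choosing $\ell = \Theta(r\log n)$ so that $n^{r/\ell} = O(1)$, will force $d \lesssim \ell$, which by the average-degree calculation of \pref{eqn:even-average-degree} (adapted to the bipartite construction, where the density is effectively $m^2/n$ over a $2(k-1)$-uniform object) translates into $m \lesssim B^k n\log n\cdot(n/r)^{k/2-1}$ unless $\calH$ contains an even cover of size $\le 2\ell = r\log_2 n$, completing the proof by contrapositive.
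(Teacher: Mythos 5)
Your plan has the same architecture as the paper's proof (regularity decomposition, a ``bipartite'' Kikuchi construction around centers, the reweighting $\Gamma = D + d\Id$, an edge-deletion step, and the trace method), but two load-bearing ingredients are specified in ways that would fail. The first is the deletion rule: you flag an edge as bad according to whether the label pair $\{C,C'\}$ has large intersections with many other hyperedges of $\calH_i$, which is a pair-based rule that deletes all or none of $E_{C,C'}$. The property the walk encoding needs is \emph{vertex-local}: in the surviving subgraph, for every Kikuchi vertex $S$ and every clause $C$, only $O(1)$ surviving edges at $S$ involve $C$, so that after pointing to the earlier occurrence of the old label the second label is determined up to $O(1)$ choices. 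A pair-based rule cannot deliver this: a pair can be unremarkable globally while a particular completion $S$ happens to align with many clauses of $\calH_i$. The paper instead deletes $\{S,T\}$ exactly when $C$ or $C'$ participates in more than one edge incident to $S$ or $T$, and the co-degree bounds enter only through a Markov argument (\pref{claim:few-deletions}) showing this removes at most half of $E_{C,C'}$. Relatedly, your stated budget --- completions ``bounded by essentially the regularity threshold'' --- is quantitatively fatal if taken literally: that count plays the role of $\eta$ in \pref{lem:good-subgraph-bound}, the double count forces $d \lesssim \eta\ell$, and with $\eta = \tau_i \approx \Paren{\frac{n}{r}}^{k/2-i}$ you could only conclude $m \lesssim n\log n \Paren{\frac{n}{r}}^{k-i-1}$, polynomially weaker than the theorem; you need $\eta = O(1)$, which is precisely what the vertex-local deletion provides.

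The second gap is that centering on a single vertex $u_i$ and working with a $2(k-1)$-uniform object does not mesh with the decomposition you invoke. The pieces $\calH^{(i)}$ come grouped around common sets $U_j$ of size $i$, with group sizes capped by the threshold and co-degree guarantees only for sets of size larger than $i$ (\pref{obs:small-big-intersect}); inside $\calH^{(i)}$ a single vertex (or any set of size at most $i$) can still lie in a huge number of clauses. If you regroup the largest piece by single centers (or take as a group \emph{all} clauses through a vertex, with no size cap), the $s=i$ term of the deletion estimate is no longer controlled by the threshold and at the stated density the deletion probability is $\gg 1$, so the ``at most half the edges are deleted'' step collapses. One must center each group on its $U_j$, so the reduced clauses are $(k-i)$-uniform, and --- since two clauses in a group can still intersect beyond $U_j$ when $i\le\frac{k-1}{2}$ --- use the colored Kikuchi graph on $\binom{[n]\times[2]}{r}$ so that $|S\oplus T| = 2(k-i)$ always and every pair contributes the same number of edges. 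Finally, your plan is silent on the pieces with $i\ge\frac{k+1}{2}$, which the paper handles by a separate reduction to the even-arity theorem (\pref{lem:large-intersections}); that reduction needs the non-obvious verification that a repeated hyperedge, or a short even cover, in the derived $2(k-i)$-uniform hypergraph lifts back to an even cover of at most twice the size in $\calH$. Without these pieces the density accounting and the deletion bound do not go through as claimed.
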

Our proof strategy broadly involves the following steps.
\begin{itemize}
    \item {\bf Hypergraph decomposition.} We partition $\calH$ into subhypergraphs $\calH^{(0)},\calH^{(1)},\dots,\calH^{(k-1)}$ with the property that every size-$(i+1)$ set in $\calH^{(i)}$ is contained in only a small number of clauses, and every clause in $\calH^{(i)}$ intersects many other clauses at a size-$i$ set.  One of the $\calH^{(i)}$ must contain at least $m/k$ clauses, and we find an even cover in that $\calH^{(i)}$.
    \item {\bf Large $i$.}  When $i\ge\frac{k+1}{2}$, we give a direct reduction to the hypergraph Moore bound for even arity hypergraphs and apply \pref{thm:even-arity}.
    \item {\bf Kikuchi graph.}  To handle the remaining values of $i$, we show the existence of an even cover by proving the contrapositive --- a hypergraph with no small even covers has a bounded number of hyperedges.  To achieve this, we appropriately define the Kikuchi graph for odd arity hypergraphs, and show that the adjacency matrix $\wh{A}$ of some suitably chosen subgraph (via the ``edge deletion process'' described below) satisfies $\wh{A} \preceq Q$ for some diagonal matrix $Q$.  Then the resulting inequality $\1^{\top} \wh{A} \1 \le \tr(Q)$ can be rearranged to bound the number of hyperedges.
    \item {\bf Trace method.}  The way we prove $\wh{A}\psdle Q$ is by using the trace moment method to show $\Norm{Q^{-1/2}\wh{A}Q^{-1/2}}_2 \le 1$.  Bounding a high trace power of $Q^{-1/2}\wh{A}Q^{-1/2}$ corresponds to bounding the total weight of closed walks that use every hyperedge an even number of times in the Kikuchi graph.
    \item {\bf Edge deletion process.}  We delete a small fraction of the edges in $K_r$ with the guarantee that in the resulting subgraph any clause participates in only a small number of incident edges to every vertex.
\end{itemize}

\paragraph{Hypergraph decomposition.}
We describe our algorithm to partition our hypergraph.

\noindent\rule{16cm}{0.4pt}
\begin{algorithm}   \label{alg:hypergraph-decomp-cover}
We partition $\calH$ into hypergraphs $\calH^{(0)},\dots,\calH^{(k-1)}$ via the following algorithm.
\begin{enumerate}
    \item Set $t = k-1$ and $\calH_{\mathrm{current}}\coloneqq \calH$.
    \item Set counter $s = 1$.
    While there is $U\subseteq[n]$ such that $|U|=t$ and $\Abs{\{C\in \calH_{\mathrm{current}} : U\subseteq C\}} \ge \max\braces*{2, \parens*{\frac{n}{r} }^{\frac{k}{2}-t} }$:
    \begin{enumerate}
        \item Choose $U$ satisfying the condition and let $\calH^{(t)}_s$ be a subset of $\{C\in \calH_{\mathrm{current}} : U \subseteq C\}$ of size $\max\braces*{2, \parens*{ \frac{n}{r} }^{\frac{k}{2}-t} }$.
        \item Add all clauses in $\calH^{(t)}_s$ to $\calH^{(t)}$.
        \item Delete all clauses in $\calH^{(t)}_s$ to $\calH_{\mathrm{current}}$.
        \item Increment $s$ by $1$.
    \end{enumerate}
    \item Decrement $t$ by 1.  If $t > 0$, go back to step $2$; otherwise take the remaining clauses in $\calH_{\mathrm{current}}$ and add them to $\calH^{(0)}$.
\end{enumerate}
\noindent\rule{16cm}{0.4pt}
\end{algorithm}

First, observe that the largest subhypergraph $\calH^{(i)}$ in the partition produced by our algorithm must have at least $\frac{m}{k}$ hyperedges.
Next, observe that $i\neq 0$ because if $|\calH^{(0)}| \geq m/k$, then there must be a $j\in[n]$ such that $\Abs{\{C\in \calH^{(0)}: j\in C\}} \geq \frac{m}{nk} \gg (\frac{n}{r})^{k/2-1}$, which would have been added to $\calH^{(1)}$.
Our goal in the rest of the proof is to find a small even cover in $\calH^{(i)}$.  The following observations articulate the properties of $\calH^{(i)}$ we need that are guaranteed by the algorithm.
\begin{observation} \label{obs:p-H-bounds}
    $\calH^{(i)}$ can be partitioned into $\calH^{(i)}_1,\dots,\calH^{(i)}_p$ where for each $j\in[p]$, there is a set $U_j$ of size $i$ such that every $C\in \calH^{(i)}_j$ contains $U_j$, and $\abs{\calH^{(i)}_j} \ge \parens*{\frac{n}{r}}^{\frac{k}{2}-i}$ and $p\le m\cdot\parens*{\frac{r}{n}}^{\frac{k}{2}-i}$.
\end{observation}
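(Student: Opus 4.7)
The plan is to obtain all three claims in the observation by directly reading off properties guaranteed by \pref{alg:hypergraph-decomp-cover} at level $t=i$. Every iteration of the inner while loop at that level chooses a size-$i$ set $U$ and adds a carefully sized collection of clauses containing $U$ to $\calH^{(i)}$, so the natural partition of $\calH^{(i)}$ is indexed by the iterations of that loop: if the loop at level $t=i$ runs for $p$ iterations, take $\calH^{(i)}_j$ to be the set added in iteration $j$, and take $U_j$ to be the set $U$ chosen at that iteration.

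For the partition structure and the size lower bound, both are immediate from step 2a: in iteration $s$ the algorithm selects $\calH^{(i)}_s \subseteq \{C \in \calH_{\mathrm{current}} : U_s \subseteq C\}$ of cardinality exactly $\max\{2,(n/r)^{k/2-i}\}$, which is possible because the while-loop guard ensures at least this many clauses containing $U_s$ remain in $\calH_{\mathrm{current}}$. In particular every $C \in \calH^{(i)}_s$ contains $U_s$ and $|\calH^{(i)}_s| \ge (n/r)^{k/2-i}$. Since step 2c then removes $\calH^{(i)}_s$ from $\calH_{\mathrm{current}}$, successive iterations produce pairwise disjoint groups, and (by the termination condition of the loop at level $i$) their union is exactly what has been added to $\calH^{(i)}$ at that level, which is all of $\calH^{(i)}$.

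For the upper bound on $p$, combine disjointness with the per-group lower bound:
\[
    |\calH^{(i)}| \;=\; \sum_{j=1}^{p} |\calH^{(i)}_j| \;\ge\; p \cdot \left(\frac{n}{r}\right)^{k/2-i}.
\]
Since $\calH^{(i)} \subseteq \calH$ has at most $m$ clauses, rearranging yields $p \le m \cdot (r/n)^{k/2-i}$, as claimed.

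There is no genuine obstacle here --- the observation is essentially a restatement of the loop invariants of the decomposition algorithm, and the only thing one has to be mildly careful about is that the $\max\{2,\cdot\}$ in step 2a never causes trouble because $\max\{2,(n/r)^{k/2-i}\} \ge (n/r)^{k/2-i}$ holds trivially. The substantive work of the proof of \pref{thm:odd-arity} occurs afterwards, when one uses this decomposition to single out some $\calH^{(i)}$ containing at least $m/k$ clauses and then bounds the Kikuchi graph associated with it via the edge-deletion process and the trace moment method.
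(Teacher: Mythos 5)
Your proof is correct and matches the paper's intent exactly: the paper states this as an immediate consequence of \pref{alg:hypergraph-decomp-cover} (grouping by iterations of the level-$i$ while loop, each group of size $\max\{2,(n/r)^{k/2-i}\}\ge (n/r)^{k/2-i}$, and $p\le m\cdot(r/n)^{k/2-i}$ by summing the disjoint groups against $|\calH|\le m$), with no separate argument given.
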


\begin{observation} \label{obs:small-big-intersect}
    For $s \geq 1$ and any $U \subseteq [n]$ such that $|U|=i+s$, the number of hyperedges in $\calH^{(i)}$ containing $U$ is at most $\max\Bigset{1,\parens*{\frac{n}{r}}^{\frac{k}{2}-s-i}}$, otherwise they would have been added to $\calH^{(i+s)}$.
\end{observation}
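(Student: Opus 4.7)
The observation is essentially a direct consequence of the loop termination condition in Algorithm~\ref{alg:hypergraph-decomp-cover}, so the plan is to trace through the state of $\calH_{\mathrm{current}}$ at an appropriately chosen moment and compare it with $\calH^{(i)}$. Specifically, I would examine the algorithm at the end of stage $t = i + s$ and argue that (a)~the termination condition of the inner \textbf{while} loop already controls how many clauses of $\calH_{\mathrm{current}}$ contain $U$, and (b)~every clause destined to lie in $\calH^{(i)}$ is still present in $\calH_{\mathrm{current}}$ at that moment.

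\textbf{Step 1 (loop termination).} At the end of stage $t = i+s$, the \textbf{while} loop has exhausted its choices, so every set $U' \subseteq [n]$ with $|U'| = i+s$ satisfies
\[
\bigl| \{ C \in \calH_{\mathrm{current}} : U' \subseteq C \} \bigr| \;<\; \max\!\Bigl\{ 2,\; \Bigl(\tfrac{n}{r}\Bigr)^{\frac{k}{2} - (i+s)} \Bigr\}.
\]
Otherwise such a $U'$ would still trigger another pass of the loop.

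\textbf{Step 2 (clauses of $\calH^{(i)}$ are still alive).} The outer loop of Algorithm~\ref{alg:hypergraph-decomp-cover} decrements $t$, and clauses are added to $\calH^{(i)}$ only during stage $t = i$. Since $s \ge 1$, stage $t = i+s$ occurs \emph{before} stage $t = i$, so no clause that eventually lands in $\calH^{(i)}$ has been removed yet. Hence $\calH^{(i)} \subseteq \calH_{\mathrm{current}}$ at the end of stage $t = i+s$, and the bound from Step~1 applies verbatim to $\calH^{(i)}$ when specialized to our particular $U$.

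\textbf{Step 3 (cleaning up the max).} The count $|\{C \in \calH^{(i)} : U \subseteq C\}|$ is a non-negative integer, so a strict inequality against $\max\{2, (n/r)^{k/2 - (i+s)}\}$ translates into the non-strict bound $\max\{1, (n/r)^{k/2 - s - i}\}$: when the max equals $2$, strict $< 2$ gives $\le 1$; when the max equals the real number $(n/r)^{k/2 - (i+s)} \ge 2$, strict inequality trivially implies $\le$ the same quantity.

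\textbf{Main obstacle.} There isn't a substantive one; the entire argument is a routine unwinding of the algorithm's invariant. The only delicate bookkeeping is the $\max\{2,\cdot\} \to \max\{1,\cdot\}$ conversion, which follows from integrality as above. I would therefore expect the proof to fit in a few lines in the final writeup.
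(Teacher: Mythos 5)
Your argument is correct and is essentially the paper's own reasoning: the observation is justified there only by the inline remark that such clauses ``would have been added to $\calH^{(i+s)}$,'' which is precisely your unwinding of the while-loop termination condition at stage $t=i+s$ combined with the fact that $\calH^{(i)}\subseteq\calH_{\mathrm{current}}$ at the end of that stage, plus the integrality step turning $\max\{2,\cdot\}$ into $\max\{1,\cdot\}$. The only situation your stage-by-stage argument does not literally cover, $i+s\ge k$ (where no stage $t=i+s$ exists), is trivial since a set of size at least $k$ is contained in at most one $k$-uniform hyperedge, and it is not needed in the paper's applications anyway.
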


\paragraph{Reduction to even arity case when $i\ge \frac{k+1}{2}$.}  
In this case, by \pref{obs:small-big-intersect}, each pair $C \neq C'$ in any $\calH_j^{(i)}$ must satisfy $C\cap C' = U_j$.
The following makes the reduction from finding even covers in $\calH^{(i)}$ if $i\ge \frac{k+1}{2}$ to the even arity case concrete.
\begin{lemma} \label{lem:large-intersections}
    Let $\calH$ be a $k$-uniform hypergraph on $n$ vertices with no even cover of size $r \log_2 n$.
    Fix $1 \leq i \leq k-1$.
    Suppose $\calH_1,\dots, \calH_p$ are disjoint subsets of $\calH$ such that for each $j \in[p]$, $|\calH_j| \geq 2$ and all pairs of hyperedges $C \neq C'\in \calH_j$ satisfy $C \cap C' = U_j$ for some $U_j \subseteq [n]$ of size $i$.
    Then,
    \begin{equation*}
        \sum_{j=1}^p |\calH_j| \leq O(n\log n) \Paren{\frac{2n}{r}}^{k-i-1} \mper
    \end{equation*}
    In particular, when $i\ge \frac{k+1}{2}$ the above is at most $O(n \log n)\cdot\parens*{\frac{n}{r}}^{k/2-1}$.
\end{lemma}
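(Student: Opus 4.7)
The plan is to reduce to the even-arity Moore bound (\pref{thm:even-arity}) applied to an auxiliary $2(k-i)$-uniform hypergraph $\calH'$ built from pairs inside the ``sunflowers'' $\calH_j$.  Writing $\wt{C} := C \setminus U_j$ for $C \in \calH_j$, the key geometric fact is that $C \oplus C' = \wt{C} \cup \wt{C'}$ is a set of size exactly $2(k-i)$ for any distinct $C, C' \in \calH_j$, since $C \cap C' = U_j$ forces $\wt{C}, \wt{C'}$ to be disjoint.  The naive construction that adds $C \oplus C'$ for \emph{every} pair in every $\calH_j$ fails because a triangle $(C_1, C_2), (C_2, C_3), (C_1, C_3)$ inside one $\calH_j$ gives a $3$-term even cover in $\calH'$ whose lifted multiset in $\calH$ has every hyperedge doubled.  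My remedy is to include only the pairs arising from a fixed spanning tree $T_j$ of the complete graph on vertex set $\calH_j$, giving $|\calH'| = \sum_j (|\calH_j| - 1) \geq \tfrac{1}{2} M$ where $M = \sum_j |\calH_j|$ (using $|\calH_j| \geq 2$).

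The concrete steps are as follows.  First I would verify $\calH'$ has no duplicate hyperedges: if $(C_1, C_2) \in T_j$ and $(C_3, C_4) \in T_{j'}$ with $j \neq j'$ produced the same element of $\calH'$, then disjointness of the partition $\{\calH_j\}$ forces $C_1, C_2, C_3, C_4$ to be four distinct hyperedges of $\calH$ with $\bigoplus_a C_a = 0$, producing an even cover of size $4 \leq r \log_2 n$ and contradicting the hypothesis.  Next I would apply \pref{thm:even-arity} to $\calH'$ with parameter $r' \approx r/2$: the required conditions $2(k-i) \leq r' \leq n/8$ follow since $i \geq (k+1)/2$ implies $2(k-i) \leq k-1 < k \leq r'$, while $r \leq n/B^k$ gives $r' \leq n/8$.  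If $|\calH'| \geq 128 n \log n \cdot (n/r')^{(k-i) - 1}$, then \pref{thm:even-arity} produces an even cover $D_1, \dots, D_\ell$ of $\calH'$ with $\ell \leq \tfrac{1}{2} r \log_2 n$.

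Now I would lift this cover back to $\calH$.  Writing $D_a = C_a \oplus C_a'$ where $(C_a, C_a') \in T_{j_a}$, the multiset $M^\star := \{C_a, C_a' : a \in [\ell]\}$ covers every vertex of $[n]$ an even number of times, because $\bigoplus_a D_a = 0$ translates to $\sum_a (\mathbf{1}_{C_a} + \mathbf{1}_{C_a'}) \equiv 0 \pmod{2}$.  Consequently its support $S := \{C \in \calH : C \text{ has odd multiplicity in } M^\star\}$ is automatically an even cover of $\calH$ of size at most $2\ell \leq r \log_2 n$.  The crux of the proof is showing $S \neq \emptyset$: if every $C$ had even multiplicity in $M^\star$, then within each $T_j$ the tree-edges used in $M^\star$ would form a subgraph of $T_j$ in which every vertex has even degree, and any subgraph of a forest with all degrees even must contain no edges.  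This would force $\ell = 0$, contradicting the nonemptiness of the $\calH'$-cover.

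Combining the above yields a proper even cover of $\calH$ of size $\leq r \log_2 n$, contradicting the hypothesis.  Hence $|\calH'| < 128 n \log n \cdot (n/r')^{k - i - 1}$, and substituting $|\calH'| \geq M/2$ and $r' = \Theta(r)$ yields the desired $M \leq O(n \log n) \cdot (2n/r)^{k - i - 1}$.  I expect the subtlest step to be the forest-parity argument for the nonemptiness of $S$: without the spanning-tree restriction, the clique of pairs in each $\calH_j$ admits short \emph{trivial} even covers in $\calH'$ that completely annihilate when lifted to $\calH$, and replacing cliques by spanning trees is exactly what rules this out while losing only a factor of $2$ in $|\calH'|$.
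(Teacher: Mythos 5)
Your proposal is correct and follows essentially the same route as the paper: the paper also builds a $2(k-i)$-uniform hypergraph from pairs within each $\calH_j$ (using consecutive pairs along an arbitrary ordering, i.e.\ a path, which is just a particular spanning tree), rules out duplicates via a size-$4$ even cover, applies \pref{thm:even-arity} with parameter $\approx r/2$, and lifts any small even cover back to $\calH$. Your forest-parity argument for the nonemptiness of the lifted cover is a cleaner, more explicit version of the paper's brief observation that at least two hyperedges of some $\calH_j$ occur exactly once.
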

\begin{proof}
    Given such disjoint subsets $\calH_1,\dots, \calH_p$, we can construct a $2(k-i)$-uniform hypergraph $\wh{\calH}$ by the following:
    for each $j\in[p]$, arbitrarily order the edges: $\calH_j = (C_1,\dots, C_{|\calH_j|})$.
    Then, add the hyperedge $C_s \oplus C_{s+1}$ to $\wh{\calH}$ for $s=1,\dots, |\calH_j|-1$.
    By assumption $|C_s \cap C_{s+1}| = |U_j| = i$, thus $|C_s \oplus C_{s+1}| = 2(k-i)$.
    The resulting $\wh{\calH}$ has
    \begin{equation*}
        |\wh{\calH}| = \sum_{j=1}^p |\calH_j|-1 \geq \frac{1}{2} \sum_{j=1}^p |\calH_j|
    \end{equation*}
    hyperedges, since $|\calH_j| \geq 2$ for all $j \in [p]$.

    We claim that $\wh{\calH}$ cannot have an even cover of size at most $\frac{r}{2} \log_2 n$.
    First, if $\wh{\calH}$ has repeated hyperedges, then there must exist $j \neq j'\in [p]$ and $C_1, C_2\in \calH_j$, $C_1', C_2' \in \calH_{j'}$ such that $C_1 \oplus C_2 = C_1' \oplus C_2'$, but then $\{C_1, C_2, C_1', C_2'\}$ would be an even cover of size 4 in $\calH$.
    Now, suppose $\wh{\calH}$ has no repeated edges but has an even cover of size $\ell$.
    Then, for any $\wh{C}$ in the even cover, we can uniquely identify $j\in[p]$ and $s \leq |\calH_j|-1$ such that $C_s, C_{s+1} \in \calH_j$ and $\wh{C} = C_s \oplus C_{s+1}$.
    Furthermore, by construction there must be at least two $C_s, C_{s'} \in \calH_j$ that each occurs only once.
    Therefore, these edges must form an even cover of size at most $2\ell$ in $\calH$.

    Since $2(k-i)$ is even and $\wh{\calH}$ has no even cover of size $\frac{r}{2}\log_2 n$, we can apply \pref{thm:even-arity} to show that
    \begin{equation*}
        |\wh{\calH}| \leq O(n\log n) \Paren{\frac{2n}{r}}^{k-i-1} \mper
    \end{equation*}
    This completes the proof.
\end{proof}

Henceforth, we assume $i\le\frac{k-1}{2}$, which is the case we need an appropriate Kikuchi graph for odd arity hypergraphs.

\paragraph{Kikuchi matrix for odd arity hypergraphs.} The following is the same Kikuchi graph defined in \cite[Definition 6.2]{GKM21}.

\begin{definition}[Colored Kikuchi graphs and subgraphs] \label{def:odd-kikuchi-graph}
    Fix $r\in \N$ and $t \in \{1,\dots, k-1\}$ such that $2k \leq r \leq n$.
    Let $\calH_1,\dots, \calH_p$ be $p$ disjoint sets of hyperedges such that for each $i\in [p]$, all hyperedges in $\calH_i$ have a common subset $U_i \subset [n]$ where $|U_i| = t$.
    For each $C\in \calH_i$, denote $\wt{C} \coloneqq C\setminus U_i$, and denote $\wt{\calH}_i \coloneqq \{\wt{C}: C \in \calH_i\}$ which can be viewed as a $(k-t)$-uniform hypergraph.
    We define the \emph{colored} Kikuchi graph $K_r$ as follows.

    The vertex set $V(K_r)$ consists of subsets of $[n]\times [2]$ of size $r$, where $S\in V$ is viewed as $(S^{(1)}, S^{(2)})$ where $S^{(1)}, S^{(2)} \subseteq [n]$ are colored \emph{green} and \emph{blue} respectively.
    For each $i\in [p]$ and each $C \neq C' \in \calH_i$, let $\wt{C}^{(1)}$ be $\wt{C}$ colored green and $\wt{C}'^{(2)}$ be $\wt{C}'$ colored blue, and we add an edge between $S, T\in V$, denoted $S \xleftrightarrow{C,C'} T$, if $S \oplus T = \wt{C}^{(1)} \oplus \wt{C}'^{(2)}$ and if one of the following holds,
    \begin{itemize}
        \item $|\wt{C} \cap S^{(1)}| = |\wt{C}' \cap T^{(2)}| = \Ceil{\frac{k-t}{2}}$ and $|\wt{C}' \cap S^{(2)}| = |\wt{C} \cap T^{(1)}| = \Floor{\frac{k-t}{2}}$, or

        \item $|\wt{C} \cap S^{(1)}| = |\wt{C}' \cap T^{(2)}| = \Floor{\frac{k-t}{2}}$ and $|\wt{C}' \cap S^{(2)}| = |\wt{C} \cap T^{(1)}| = \Ceil{\frac{k-t}{2}}$, or
    \end{itemize}
    \pref{fig:kikuchi-example} shows an example of two edges $C,C'\in \calH_i$ forming an edge $(S,T)$ in the Kikuchi graph.

    We say that the edge $(S,T)$ is type-$i$, and for $S\in V$, we define the type-$i$ degree as
    \begin{equation*}
        d_{S,i} \coloneqq \Abs{\Set{C\in \calH_i: |\wt{C} \cap S^{(1)}| \text{ or } |\wt{C} \cap S^{(2)}| \in \Set{\Ceil{\frac{k-t}{2}}, \Floor{\frac{k-t}{2}} } }} \mper
    \end{equation*}
    We call any subgraph of the colored Kikuchi graph as a \emph{colored Kikuchi subgraph}.
\end{definition}

\begin{figure}[ht!]
    \centering
    \includegraphics[width=0.7\textwidth]{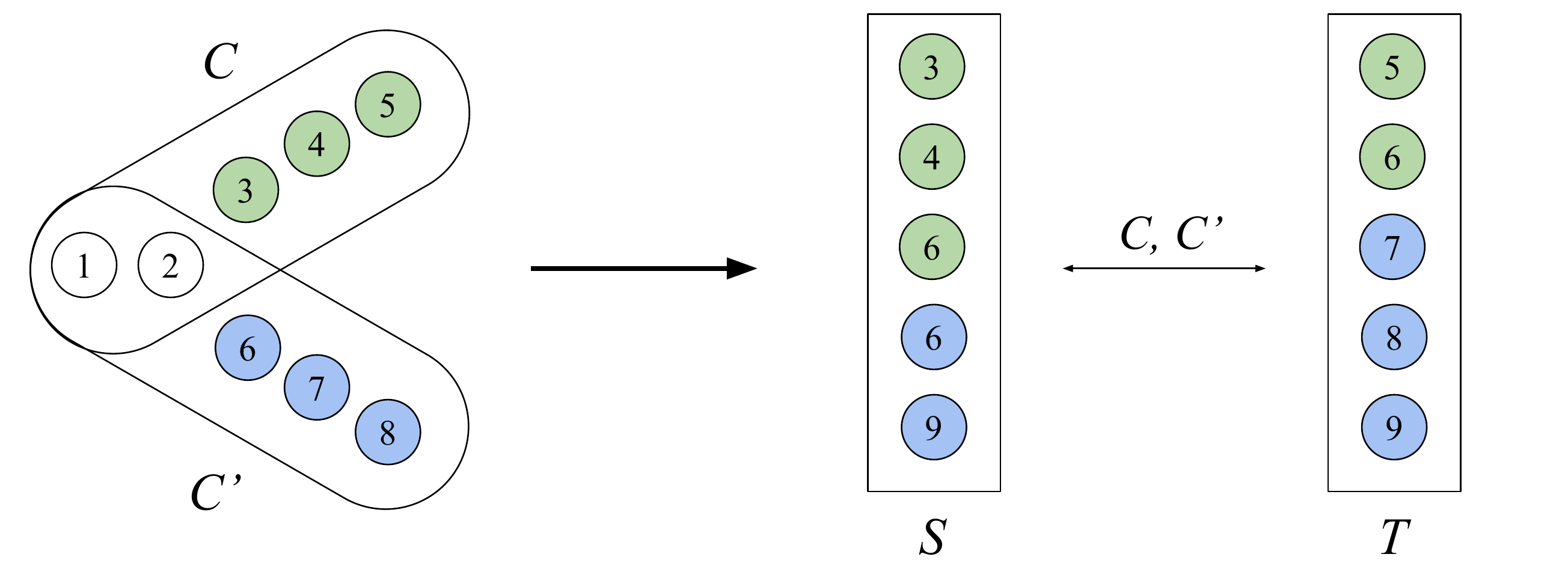}
    \caption{An example of \pref{def:odd-kikuchi-graph} with $k=5$ and $t=2$. On the left are two 5-uniform hyperedges in $\calH_i$ with common intersection $U_i = \{1,2\}$ and $\wt{C} = \{3,4,5\}$, $\wt{C}' = \{6,7,8\}$.
    On the right, $S$ and $T$ are vertices in the Kikuchi graph where $S^{(1)} = \{3,4,6\}$, $T^{(1)} = \{5,6\}$ are colored green,  and $S^{(2)}= \{6,9\}$, $T^{(2)} = \{7,8,9\}$ are colored blue.
    $C$ and $C'$ form an edge between $S,T$ because $|\wt{C} \cap S^{(1)}| = 2$, $|\wt{C} \cap T^{(1)}| = 1$, $|\wt{C}' \cap S^{(2)}| = 1$, and $|\wt{C}' \cap T^{(2)}| = 2$.
    }
    \label{fig:kikuchi-example}
\end{figure}

\begin{remark}[Purpose of coloring]
    The coloring in \pref{def:odd-kikuchi-graph} is needed because $C \neq C'\in \calH_i$ may have intersection larger than $t$, meaning $|C \oplus C'| = |\wt{C} \oplus \wt{C}'| < 2(k-t)$, making the analysis complicated.
    Coloring $\wt{C},\wt{C}'$ with different colors automatically makes $\wt{C}^{(1)}, \wt{C}'^{(2)}$ disjoint, i.e.\ $|S\oplus T| = |\wt{C}^{(1)} \oplus \wt{C}'^{(2)}| = 2(k-t)$.
    Note also that a vertex $S \subseteq [n]\times [2]$ may contain two copies of some element in $[n]$ with different colors, as shown in \pref{fig:kikuchi-example}.
\end{remark}

\begin{observation}[Parameters of the Kikuchi graph]
\label{obs:parameters-of-kikuchi}
    The Kikuchi graph $(V,E)$ defined in \pref{def:odd-kikuchi-graph} has $|V| = \binom{2n}{r}$, and each distinct pair $C,C' \in \calH_i$ contributes a collection of edges $E_{C,C'}$ in $E$, where
    \begin{equation*}
        |E_{C,C'}| = \alpha_t \coloneqq \binom{k-t}{\floor{\frac{k-t}{2}}} \binom{k-t}{\ceil{\frac{k-t}{2}}} \binom{2n-2(k-t)}{r-(k-t)}  \cdot 2^{\1(\text{$k-t$ is odd})}
    \end{equation*}
    by first choosing $\wt{C} \cap S^{(1)}$, $\wt{C}'\cap S^{(2)}$ (or $\wt{C}\cap S^{(2)}$, $\wt{C}'\cap S^{(1)}$) and completing $S$'s remaining $r-(k-t)$ elements.
    Thus, $|E| = \sum_{i=1}^p \binom{|\calH_i|}{2} \cdot \alpha_t$, and standard calculations show that when $2k \leq r \leq n/8$, the average degree $d = \frac{2|E|}{|V|}$ satisfies
    \begin{equation*}
        \Paren{\frac{r}{2n}}^{k-t} \sum_{i=1}^p \binom{|\calH_i|}{2} \leq d \leq 2^{2k} \Paren{\frac{r}{2n}}^{k-t} \sum_{i=1}^p \binom{|\calH_i|}{2} \mper
    \end{equation*}
\end{observation}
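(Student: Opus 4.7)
The equality $|V|=\binom{2n}{r}$ is immediate since a vertex is a size-$r$ subset of $[n]\times[2]$, so the main content of the observation is the count $|E_{C,C'}|=\alpha_t$ for a distinct pair $C,C'\in\calH_i$ and the subsequent bookkeeping for $|E|$ and $d$. My plan is to first fix an \emph{ordered} pair $(C,C')$ playing the (green, blue) role, count the undirected edges it contributes, and then observe that the reversed ordered pair $(C',C)$ contributes an edge-disjoint collection --- disjoint because for any edge in $E_{C,C'}$ the symmetric difference $S\oplus T$ uniquely identifies the green portion $\wt{C}^{(1)}$ and the blue portion $\wt{C}'^{(2)}$, and hence which of $\{C,C'\}$ plays which color role.

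For fixed ordered $(C,C')$, the constraint $S\oplus T=\wt{C}^{(1)}\oplus\wt{C}'^{(2)}$ determines $T$ from $S$, so I would parameterize the edges by choices of $S$. The first case in \pref{def:odd-kikuchi-graph} specifies $|\wt{C}\cap S^{(1)}|=\Ceil{\frac{k-t}{2}}$ and $|\wt{C}'\cap S^{(2)}|=\Floor{\frac{k-t}{2}}$, which yields $\binom{k-t}{\Ceil{(k-t)/2}}\binom{k-t}{\Floor{(k-t)/2}}$ choices for the intersections with $\wt{C}^{(1)}\cup\wt{C}'^{(2)}$ and a further $\binom{2n-2(k-t)}{r-(k-t)}$ choices for the remaining $r-(k-t)$ coordinates of $S$ from the $2n-2(k-t)$ slots disjoint from $\wt{C}^{(1)}\cup\wt{C}'^{(2)}$; the second case gives the same $S$-count. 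Care is required to avoid double-counting undirected edges, and this is where the indicator $\1(k-t\text{ odd})$ enters. When $k-t$ is odd, the two cases impose different sizes on $|\wt{C}\cap S^{(1)}|$, and case 1 for ordered $(S,T)$ is precisely case 2 for ordered $(T,S)$, so the case-1 $S$-count enumerates each undirected edge exactly once, giving $\binom{k-t}{\Ceil{(k-t)/2}}\binom{k-t}{\Floor{(k-t)/2}}\binom{2n-2(k-t)}{r-(k-t)}$ undirected edges from $(C,C')$. When $k-t$ is even the two cases coincide and both orderings of each undirected edge satisfy the common condition, so the $S$-count overcounts by a factor of two, giving $\frac{1}{2}\binom{k-t}{(k-t)/2}^2\binom{2n-2(k-t)}{r-(k-t)}$. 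In either case, doubling to add the edge-disjoint contribution of $(C',C)$ reproduces exactly $\alpha_t$: the surviving factor $2^{\1(k-t\text{ odd})}$ appears only in the odd regime because in the even regime the doubling is cancelled by the $\tfrac{1}{2}$ from the orientation overcount.

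The identity $|E|=\sum_{i=1}^{p}\binom{|\calH_i|}{2}\alpha_t$ then follows by summing $|E_{C,C'}|$ over unordered pairs in each $\calH_i$ and over $i$, treating the Kikuchi graph as a multigraph in which each edge carries a label $(i,\{C,C'\})$ so that distinct labels contribute distinct (labeled) entries to $E$. For the average degree, I would write $d=\frac{2|E|}{|V|}=\frac{2\alpha_t}{\binom{2n}{r}}\sum_{i=1}^{p}\binom{|\calH_i|}{2}$ and verify $\Paren{\frac{r}{2n}}^{k-t}\le\frac{2\alpha_t}{\binom{2n}{r}}\le 2^{2k}\Paren{\frac{r}{2n}}^{k-t}$ by a routine falling-factorial estimate: under $2k\le r\le n/8$, each of the $2(k-t)$ linear factors appearing in $\binom{2n-2(k-t)}{r-(k-t)}/\binom{2n}{r}$ is comparable either to $r/(2n)$ or to $1$ up to a factor of $2$, which gives the target $(r/(2n))^{k-t}$ up to a $2^{O(k)}$ loss. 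This loss, together with $\binom{k-t}{\Ceil{(k-t)/2}}\binom{k-t}{\Floor{(k-t)/2}}\le 4^k$ and $2^{\1(k-t\text{ odd})}\le 2$, fits into the constant $2^{2k}$. There is no deep obstacle; the only care needed is the ordered/unordered and colored bookkeeping above, which is precisely what dictates where the $2^{\1(k-t\text{ odd})}$ factor lives.
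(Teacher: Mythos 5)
Your counting is correct and follows essentially the same route the paper intends: the observation is stated with only a one-line justification (choose the intersections of $S$ with the green and blue clause copies, complete $S$, with the parenthetical "or" covering the swapped coloring), and your ordered-pair/edge-disjointness bookkeeping, including the explanation of where the factor $2^{\1(k-t \text{ odd})}$ lives, fleshes this out exactly. The only nitpick is that your falling-factorial sketch ("each factor comparable up to a factor of $2$") is slightly too lossy to land precisely on the stated constant $2^{2k}$; a marginally more careful pairing, e.g.\ $\binom{2n-2(k-t)}{r-(k-t)}/\binom{2n}{r} \le (r/2n)^{k-t}$ exactly (using $r \le n$) together with $\binom{k-t}{\floor{(k-t)/2}}\binom{k-t}{\ceil{(k-t)/2}} \cdot 2\cdot 2^{\1(k-t \text{ odd})} \le 4\cdot 4^{k-t} \le 4^{k}$ (since $t \ge 1$), gives the upper bound as stated, which is within the "standard calculations" the paper invokes.
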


Our ideal hope is that the adjacency matrix $A$ of the Kikuchi graph, constructed from $\calH^{(i)} = (\calH^{(i)}_1,\dots,\calH^{(i)}_p)$, is bounded in the PSD order by some low-trace diagonal matrix $Q$.
To achieve this, we prove the following lemma analogous to \pref{lem:even-kikuchi-norm-bound}, but with the additional requirement that $d_{S,i}$ is small for all $S\in V(K_r)$ and $i\in [p]$.
The proof is almost identical to the proof of \pref{lem:even-kikuchi-norm-bound} but the encoding for an ``old hyperedge'' step is different.

\begin{lemma}   \label{lem:good-subgraph-bound}
    Let $r\geq 2k$.
    Given disjoint hyperedges $\calH_1,\dots,\calH_p$, let $\wh{A}$ be the adjacency matrix of any colored Kikuchi subgraph $\wh{K}_r$ as defined in \pref{def:odd-kikuchi-graph}, and let $\Gamma = D + d \Id$ where $D$ is the degree matrix and $d$ is the average degree of $G$.
    Fix $\eta \in \R$ and let $\ell\in \N$ be even.
    Suppose there is no even cover of size at most $\ell$, and suppose $d_{S,i} \leq \eta$ for all $S\in V$ and $i\in[p]$. Then,
    \begin{equation*}
        \Norm{\Gamma^{-1/2}\wh{A} \Gamma^{-1/2}}_2 \leq 2n^{r/\ell}\sqrt{\frac{2\eta\ell}{d}} \mper
    \end{equation*}
\end{lemma}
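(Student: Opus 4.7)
The plan is to follow the trace moment method in close analogy with the proof of \pref{lem:even-kikuchi-norm-bound}. Setting $\wt{A} = \Gamma^{-1/2} \wh{A} \Gamma^{-1/2}$, the bound $\Norm{\wt{A}}_2^\ell \le \tr((\Gamma^{-1}\wh{A})^\ell)$ expands the right-hand side as a weighted sum over length-$\ell$ closed walks $S_1 \to S_2 \to \cdots \to S_\ell \to S_1$ in $\wh{K}_r$, with weight $\prod_j \Gamma_{S_j,S_j}^{-1}$. Each step carries an ordered edge-label $(C_j, C_j') \in \calH_{i_j} \times \calH_{i_j}$ obeying $S_j \oplus S_{j+1} = \wt{C}_j^{(1)} \oplus \wt{C}_j'^{(2)}$. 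Telescoping around the closed walk gives $\bigoplus_j \wt{C}_j^{(1)} = 0$ and $\bigoplus_j \wt{C}_j'^{(2)} = 0$, which, since every $C, C' \in \calH_i$ share the common set $U_i$, combines to $\bigoplus_j (C_j \oplus C_j') = 0$ in $\F_2^n$. Consequently the multiset of hyperedges appearing an odd number of times forms an even cover of $\calH$, and the hypothesis ``no even cover of size at most $\ell$'' forces each hyperedge to occur an even number of times in every closed walk (any factor-of-two gap arising from there being $2\ell$ hyperedge occurrences per walk is absorbed into constants by applying the bound with $\ell/2$ in place of $\ell$ if needed).

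I would next encode each walk by assigning one bit $b_j \in \zo$ to each step, declaring it ``new'' ($b_j = 0$) if both $C_j, C_j'$ appear in the walk here for the first time, and ``old'' ($b_j = 1$) otherwise. A new step has at most $d_{S_j}$ choices for $S_{j+1}$, and the weight $\Gamma_{S_j,S_j}^{-1} = 1/(d_{S_j}+d)$ bounds its contribution by $1$. For an old step, one specifies (i) which of the at most $2\ell$ previous hyperedge occurrences is being reused, which simultaneously identifies both the hyperedge $C$ and its type $i$; (ii) which of the green/blue positions it occupies in the current edge-label; and (iii) the partner hyperedge, which must lie in $\calH_i$ and satisfy the Kikuchi incidence condition at $S_j$ --- here is precisely where the hypothesis $d_{S,i} \le \eta$ enters, bounding (iii) by $\eta$. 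Combined with the weight $\Gamma_{S_j,S_j}^{-1} \le 1/d$, each old step contributes at most $O(\eta \ell / d)$.

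The parity constraint implies that the $2\ell$ hyperedge occurrences are spread across at most $\ell$ distinct hyperedges, so at most $\ell$ of these are ``firsts'', which in turn forces at least $\ell/2$ of the $\ell$ steps to be old. Summing over the $2^\ell$ bit patterns and the $\binom{2n}{r} \le n^r$ starting vertices (using $r \ge 2k \ge 6$ so that $r! \ge 2^r$) yields
\[
\tr((\Gamma^{-1}\wh{A})^\ell) \;\le\; O(1)^\ell \cdot n^r \cdot (\eta \ell / d)^{\ell/2},
\]
and taking $\ell$-th roots recovers the claimed bound up to absolute constants. The main delicate point I anticipate is item (iii) of the old-step encoding: the asymmetric intersection conditions and green/blue conventions of \pref{def:odd-kikuchi-graph} must be tracked carefully to confirm that every valid partner hyperedge is indeed counted by $d_{S_j,i} \le \eta$ (with no double-counting across the two color orderings) and that the resulting endpoint $T$ is uniquely determined by the recorded data. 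This is the structural guarantee that the forthcoming edge-deletion process will be engineered to provide.
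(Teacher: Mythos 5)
Your proposal follows essentially the same argument as the paper's proof: the trace moment method with the new/old step encoding, bounding a new step by $1$ via the reweighting $\Gamma$ and an old step by $O(\eta\ell/d)$ using the hypothesis $d_{S,i}\le\eta$ (after specifying the reused occurrence, its color, and the partner incident to $S_j$), with at least $\ell/2$ old steps forced by the even-multiplicity constraint and $\binom{2n}{r}\le n^r$ starting vertices. The differences are cosmetic: slightly looser constants, and your explicit handling of the $\ell$ versus $2\ell$ even-cover-size bookkeeping, which the paper absorbs implicitly.
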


\begin{proof}
    Let $\wt{A} = \Gamma^{-1/2}\wh{A} \Gamma^{-1/2}$.
    We again use the trace power method:
    \begin{equation*}
        \bignorm{\wt{A}}_2^\ell \leq \tr(\wt{A}^\ell) = \tr( (\Gamma^{-1} A)^\ell) \mper
    \end{equation*}
    Note that each edge $(S,T)$ in $\wh{A}$ corresponds to two hyperedges of the same type (both from some $\calH_i$), one green and one blue, and since there is no even covers of size at most $\ell$, any closed walk must contain an even number of each hyperedge.

    We encode a closed walk $S_1 \rightarrow S_2 \rightarrow \cdots \rightarrow S_{\ell} \rightarrow S_1$ as follows:
    \begin{itemize}
        \item Starting vertex $S_1 \in V$.
        \item One bit $b_i \in \{0,1\}$ at step $i$ to encode whether this step uses two new hyperedges or one (or more) old hyperedge.
        \begin{itemize}
            \item If $b_i = 0$ (two new hyperedges), select one of $S_i$'s neighbors as $S_{i+1}$.
            \item If $b_i = 1$ (old hyperedge), select an old green (or blue) hyperedge $C$ from the previous steps, and select a blue (or green) hyperedge $C'$ incident to $S_i$.
        \end{itemize}
    \end{itemize}
    Recall that for $b\in \{0,1\}$, we write $N_b(S)$ as the possible next steps in the walk from $S$.
    Using the same analysis as the proof of \pref{lem:even-kikuchi-norm-bound}, for $b=0$,
    \begin{equation*}
        \sum_{S_{i+1} \in N_0(S_i)} \frac{1}{d_{S_i}+d} \leq 1 \mcom
    \end{equation*}
    and for $b = 1$, suppose the old edge is of type $j\in [p]$, then $|N_1(S_i)| \leq 2 \ell d_{S_i,j}$ (one previous step, 2 colors), thus
    \begin{equation*}
        \sum_{S_{i+1} \in N_b(S_i)} \frac{1}{d_{S_i}+d} \leq \frac{2\ell d_{S_i,j}}{d_{S_i} + d} \leq \frac{2 \eta\ell}{d} \mper
    \end{equation*}
    We can assume that $2\eta\ell \leq d$, otherwise we can simply treat all steps as new hyperedges.

    There are $\binom{2n}{r} \leq (\frac{2en}{r})^{r} \leq n^r$ (since $r \geq 2k$ and $k\geq 3$) choices to pick the starting vertex $S_1$.
    Furthermore, there can be at most $\ell/2$ steps that use two new hyperedges, i.e.\ $|b| \geq \ell/2$, thus
    \begin{equation*}
        \tr((\Gamma^{-1} \wh{A})^\ell) \leq n^r \sum_{b\in \{0,1\}^{\ell}}\Paren{\frac{2\eta\ell}{d}}^{|b|} 
        \leq 2^{\ell} n^r \Paren{\frac{2\eta\ell}{d}}^{\ell/2} \mper
    \end{equation*}
    Taking the $\ell$-th root completes the proof.
\end{proof}



\paragraph{Construction of colored Kikuchi subgraph.}
Unfortunately, the requirement for all $d_{S,i}$ to be bounded by a small $\eta$ prohibits us from obtaining a good bound on the adjacency matrix of the full colored Kikuchi graph $K_r$ using \pref{lem:good-subgraph-bound}.  This motivates dropping a small number of edges from $K_r$, and bounding the adjacency matrix $\wh{A}$ of the resulting subgraph $\wh{K}_r$ instead.
Thus, we proceed with identifying a suitable colored Kikuchi subgraph $\wh{K}_r$ of $\calH^{(i)}$ with adjacency matrix $\wh{A}$ via the following \emph{edge deletion process}:
\begin{displayquote}
    Start with the colored Kikuchi graph $K_r$, and delete every edge $\{S,T\}$ caused by a pair of clauses $C, C'$ such that $S$ or $T$ has strictly more than $1$ edge that $C$ or $C'$ participates in.
\end{displayquote}
To obtain a handle on the average degree of $\wh{K}_r$, we first show that the number of edges of $K_r$ we delete to obtain $\wh{K}_r$ is only a small fraction of the total number of edges, and then the desired lower bound follows from a lower bound on $|E(K_r)|$.

\paragraph{Analyzing the edge deletion process.}
We find it convenient to think of the fraction of deleted edges as the \emph{probability that a uniformly random edge in $K_r$ is absent in $\wh{K}_r$}.
With this probabilistic interpretation in hand, observe that a uniformly random edge in $K_r$ is the same as choosing a uniformly random pair of clauses $(C,C')$ such that $C$ and $C'$ both belong to the same $\calH_j^{(i)}$ and then choosing a random edge $\{S,T\}$ in $E_{C,C'}$, the collection of edges adorned by $(C,C')$.
We will use the notation $C''\arrow_C S$ to mean $\abs{\wt{C}''\cap S} = \abs{\wt{C} \cap S}$, where we recall from \pref{def:odd-kikuchi-graph} that $\wt{C} \coloneqq C\setminus U_j$ with $U_j$ being the size-$i$ common intersection of $\calH_j^{(i)}$.
We then show the following.
\begin{claim}[Deletion probability]
\label{claim:few-deletions}
    For every pair of clauses $(C,C')$ such that $C$ and $C'$ belong to the same $\calH_j^{(i)}$ for some $j\in [p]$,
    \[
        \Pr_{\{S,T\}\sim E_{C,C'}}\bracks*{\{S,T\}\text{ deleted}} \le k\cdot 4^{k+1} \sqrt{\frac{r}{n}} \mper
    \]
\end{claim}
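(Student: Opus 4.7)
The plan is to use symmetry and a union bound to reduce to a single event, then split the resulting union bound by the overlap structure of $\wt{C''}$ with $\wt{C'}$.  By the symmetries $C\leftrightarrow C'$ and $S\leftrightarrow T$ (modulo the case-(i)/(ii) distinction in \pref{def:odd-kikuchi-graph}), the deletion probability is at most $4\Pr_{\{S,T\}\sim E_{C,C'}}[S\text{ has another edge at it that } C \text{ participates in}]$.  Any such additional edge pairs $C$ with some $C''\in \calH_j^{(i)}\setminus\{C,C'\}$, either with $C$ in the green role (imposing $|\wt{C''}\cap S^{(2)}|\in\{\lfloor(k-i)/2\rfloor,\lceil(k-i)/2\rceil\}$) or in the blue role (imposing $|\wt{C''}\cap S^{(1)}|\in\{\lfloor(k-i)/2\rfloor,\lceil(k-i)/2\rceil\}$ together with the extra constraint $|\wt{C}\cap S^{(2)}|\in\{\lfloor(k-i)/2\rfloor,\lceil(k-i)/2\rceil\}$).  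The blue-role case picks up an additional $(r/n)^{\lfloor(k-i)/2\rfloor}$ factor from the extra constraint on $S$ and is easily dominated by the green-role contribution.

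For the green-role case I union-bound over $C''$ and estimate $\Pr[(C,C'')\text{ is a valid edge at } S \mid \{S,T\}\in E_{C,C'}]$.  Sampling $\{S,T\}$ from $E_{C,C'}$ is equivalent to sampling $S$ uniformly from the set of valid endpoints (since $T=S\oplus \wt{C}^{(1)}\oplus\wt{C'}^{(2)}$ is determined), so this conditional probability is a ratio of counts of valid $S$.  I partition the union bound by the overlap $s' = |\wt{C'}\cap\wt{C''}|$.  Two facts combine: (i)~by \pref{obs:small-big-intersect}, once $s'\ge k/2-i$ the bound $\max\{1,(n/r)^{k/2-i-s'}\}$ is exactly $1$, so the only clause in $\calH^{(i)}$ containing $U_j\cup(\wt{C'}\cap\wt{C''})$ is $C'$ itself and no such $C''\ne C'$ exists; (ii)~for each $s'<k/2-i$, the number of $C''$ with this overlap is at most $\binom{k-i}{s'}(n/r)^{k/2-i-s'}$, and the conditional bad probability per such $C''$ is at most $\binom{k-i-s'}{r_2'-s'}(r/n)^{r_2'-s'}$, where $r_2'=\lfloor(k-i)/2\rfloor$, because the $s'$ overlapping elements can be supplied by the required $r_2$-subset of $\wt{C'}\subseteq S^{(2)}$, while the remaining $r_2'-s'$ needed elements of $\wt{C''}\setminus\wt{C'}$ must fall into the uniform free part of $S$ in color~$2$ (of size at most $r-(k-i)$).

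Multiplying these two bounds and summing over $s'\in[0,r_2']$ telescopes via the identity $\binom{k-i}{s'}\binom{k-i-s'}{r_2'-s'}=\binom{k-i}{r_2'}\binom{r_2'}{s'}$ into $\binom{k-i}{r_2'}\cdot 2^{r_2'}\cdot(n/r)^{k/2-i-r_2'}$.  For $k$ odd and $1\le i\le (k-1)/2$, the exponent $k/2-i-\lfloor(k-i)/2\rfloor$ equals $-i/2$ when $k-i$ is even and $(1-i)/2$ when $k-i$ is odd, both at most $-1/2$, giving the $(r/n)^{1/2}$ factor.  All the $2^{O(k)}$ combinatorial factors (including those needed to handle intersections between $\wt{C}$ and $\wt{C'}$ or $\wt{C''}$ that we suppressed above) and the factor of $4$ from the symmetric cases are absorbed into the claimed constant $k\cdot 4^{k+1}$.

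The main obstacle is the $s'$-slice bookkeeping: one must verify that \pref{obs:small-big-intersect} exactly kills the contribution of large-overlap $C''$ (for $s'\ge k/2-i$), and that the per-slice product telescopes cleanly for $s'<k/2-i$.  The key structural cancellation is that the upper bound on the per-$C''$ probability carries an exponent $r_2'-s'$ in $r/n$ which exactly offsets the exponent $k/2-i-s'$ in the $s'$-dependent clause count $(n/r)^{k/2-i-s'}$, producing a slice-independent product $(n/r)^{k/2-i-r_2'}$.  This cancellation is precisely why the edge-deletion strategy (in place of the vertex/row pruning of \cite{GKM21}) avoids the need for Schudy--Sviridenko concentration or extra log factors.
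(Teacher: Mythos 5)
Your argument is essentially the paper's own proof of \pref{claim:few-deletions}: the same symmetry-plus-union-bound reduction to a single event, the same per-$C''$ estimate counting how many required elements of $\wt{C}''$ must land in the uniformly random part of $S$, and the same use of \pref{obs:small-big-intersect} so that the clause-count exponent cancels the probability exponent and leaves $(r/n)^{i/2}$ or $(r/n)^{(i-1)/2}\le (r/n)^{1/2}$ --- tracking the overlap with $C'$ (because your event constrains $S^{(2)}$) rather than with $C$ (the paper's event constrains $S^{(1)}$) is just the mirror image under the symmetry you already invoked. One small bookkeeping caveat: you should not exclude $C''=C'$ in the subcase where $C''$ plays the green role and $C$ the blue role, since the color-swapped pair $(C',C)$ can form a genuine second edge at $S$; that single extra term is bounded by the same computation because the decomposition forces $|C\cap C'|-i< k/2 - i$, so the stated bound is unaffected.
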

\begin{proof}
Recall that we defined $\wt{C} = C\setminus U_j$ and $\wt{C}' = C'\setminus U_j$.
The distribution of $S = (S^{(1)}, S^{(2)})$ (the green and blue vertices) is uniform on all sets such that:
\begin{itemize}
    \item $\abs{\wt{C}\cap S^{(1)}} = \Ceil{\frac{k-i}{2}}$, $\abs{\wt{C}'\cap S^{(2)}} = \Floor{\frac{k-i}{2}}$, or
    \item $\abs{\wt{C}\cap S^{(1)}} = \Ceil{\frac{k-i}{2}}$, $\abs{\wt{C}'\cap S^{(2)}} = \Floor{\frac{k-i}{2}}$.
\end{itemize}
Then, by union bound,
\begin{align*}
    \Pr_{\{S,T\}\sim E_{C,C'}} \bracks*{ \{S,T\}\text{ deleted} } \le& \Pr_{\{S,T\}\sim E_{C,C'}} \bracks*{ \exists C''\arrow_C S^{(1)}: C''\in \calH_j^{(i)}, C'' \ne C } +\\
    &\Pr_{\{S,T\}\sim E_{C,C'}} \bracks*{ \exists C'' \arrow_{C'} S^{(2)}: C''\in \calH_j^{(i)}, C'' \ne C' } +\\
    &\Pr_{\{S,T\}\sim E_{C,C'}} \bracks*{ \exists C'' \arrow_C T^{(1)}: C''\in \calH_j^{(i)}, C'' \ne C }+\\
    &\Pr_{\{S,T\}\sim E_{C,C'}} \bracks*{ \exists C'' \arrow_{C'} T^{(2)}: C''\in \calH_j^{(i)}, C'' \ne C' } \\
    =&~ 4 \Pr_{\{S,T\}\sim E_{C,C'}} \bracks*{ \exists C''\arrow_C S^{(1)}: C''\in \calH_j^{(i)}, C'' \ne C} \\
    \intertext{then by Markov's inequality,}
    \le&~4\underset{\{S,T\}\sim E_{C,C'}}{\E} \Abs{C'':C''\arrow_C S^{(1)}, C''\in\calH^{(i)}_j, C''\ne C} \\
    =&~4 \sum_{\substack{C'':C''\in \calH_j^{(i)} \\ C''\ne C }} \Pr_{\{S,T\}\sim E_{C,C'}} \bracks*{C''\arrow_C S^{(1)}} \numberthis \label{eq:prob-delete}
\end{align*}
Once the intersection of $S$ with $\wt{C}$ and $\wt{C}'$ is chosen, the remaining elements are selected uniformly at random without replacement.
For fixed $C''\ne C \in \calH^{(i)}_j$, since they contain $U_j$ of size $i$, $|\wt{C}'' \cap \wt{C}| = |C'' \cap C| - i$, and $S$ must include $\floor{\frac{k-i}{2}} - (|C''\cap C|-i)$ additional    elements from $\wt{C}'' \setminus \wt{C}$ for $C'' \to_C S^{(1)}$ to hold.
Thus,
\[
    \Pr_{\{S,T\}\sim E_{C,C'}} \bracks*{C''\to_C S^{(1)}} \le 2^k \parens*{\frac{r}{n}}^{\Floor{\frac{k-i}{2}} - |C''\cap C|+i}.
\]
Thus, we can prove:
\begin{align*}
    \pref{eq:prob-delete} &\le 4\cdot 2^k \sum_{s=i}^{k-1} \sum_{\substack{U\subseteq C \\ |U|=s}} \sum_{\substack{C'':C''\in\calH_j^{(i)}\\ C'' \ne C \\ C''\cap C = U }} \parens*{\frac{r}{n}}^{\Floor{\frac{k-i}{2}} - s + i}
    \numberthis \label{eqn:deletion-rate-bound}
\end{align*}
By \pref{obs:small-big-intersect}, we can bound the above as
\begin{align*}
    &\le 4\cdot 2^k \sum_{s=i}^{k-1} \sum_{\substack{U\subseteq C \\ |U|=s}} \parens*{\frac{n}{r}}^{\frac{k}{2}-s} \parens*{\frac{r}{n}}^{\frac{k-i}{2} - \frac{\Ind[k-i\text{ odd}]}{2} - s + i} \\
    &\le k\cdot 4^{k+1} \sqrt{\frac{r}{n}} \mcom
\end{align*}
as $\frac{i}{2} - \frac{\Ind[k-i\text{ odd}]}{2} \geq \frac{1}{2}$ for all $i\geq 1$ when $k$ is odd.
\end{proof}
\paragraph{Lower bound on average degree in $A$.}
By choosing $B$ large enough, the upper bound on $r$, and \pref{claim:few-deletions}, the fraction of edges we delete from the original colored Kikuchi graph $K_r$ to obtain $\wh{K}_r$ is at most $.5$ and hence $d(\wh{K}_r)\ge .5d(K_r)$ where $d(K_r)$ and $d(\wh{K}_r)$ are the average degrees in $K_r$ and $\wh{K}_r$ respectively.  Thus, we know:
\[
    d(K_r) \ge \parens*{\frac{r}{2n}}^{k-i} \sum_{j=1}^p {|\calH^{(i)}_j|\choose 2} \ge \parens*{\frac{r}{2n}}^{k-i} \cdot p \cdot { m/kp \choose 2} \ge \parens*{\frac{r}{2n}}^{k-i} \cdot \frac{m^2}{4k^2 p}
\]
where the first inequality uses \pref{obs:parameters-of-kikuchi}, and the second inequality is due to Jensen's inequality.

By the upper bound $p\le m\cdot\parens*{\frac{r}{n}}^{\frac{k}{2}-i}$ as noted in \pref{obs:p-H-bounds}:
\[
    d(K_r) \ge \frac{1}{4k^2 2^k}\cdot\parens*{\frac{r}{n}}^{k-i}\cdot\parens*{\frac{n}{r}}^{\frac{k}{2}-i}\cdot m = \frac{1}{4k^2 2^k}\cdot \parens*{\frac{r}{n}}^{\frac{k}{2}} \cdot m.
\]
As an upshot, we know:
\begin{claim}   \label{claim:avg-deg-lb}
    $\displaystyle d(\wh{K}_r)\ge \frac{1}{8k^2 2^k} \cdot \parens*{\frac{r}{n}}^{k/2} \cdot m$.
\end{claim}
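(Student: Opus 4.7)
}
The plan is to chain together \pref{obs:parameters-of-kikuchi}, \pref{obs:p-H-bounds}, and \pref{claim:few-deletions} into a short calculation; every ingredient has already been set up, so only constants need tracking.

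First I will lower bound $d(K_r)$ itself. \pref{obs:parameters-of-kikuchi} supplies $d(K_r) \ge (r/(2n))^{k-i} \sum_{j=1}^p \binom{|\calH_j^{(i)}|}{2}$. Since $\calH^{(i)}$ is the largest of the $k$ subhypergraphs produced by \pref{alg:hypergraph-decomp-cover}, I have $\sum_j |\calH_j^{(i)}| = |\calH^{(i)}| \ge m/k$, and each $|\calH_j^{(i)}| \ge 2$ by construction in the algorithm, so the average part size is at least $2$. This lets me combine Jensen's inequality for the convex function $x \mapsto \binom{x}{2}$ with the elementary bound $\binom{x}{2} \ge x^2/4$ valid for $x \ge 2$ to obtain $\sum_j \binom{|\calH_j^{(i)}|}{2} \ge p\binom{m/(kp)}{2} \ge m^2/(4k^2 p)$. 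Substituting the upper bound $p \le m(r/n)^{k/2-i}$ from \pref{obs:p-H-bounds} and simplifying yields $d(K_r) \ge \frac{1}{4k^2 2^{k-i}}(r/n)^{k/2} m \ge \frac{1}{4k^2 2^k}(r/n)^{k/2} m$.

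Second, I will argue that the edge deletion step loses at most a factor of two in the edge count. Because \pref{obs:parameters-of-kikuchi} gives $|E_{C,C'}| = \alpha_t$ independently of the pair $(C,C')$, sampling a uniformly random edge of $K_r$ is equivalent to first sampling a uniformly random pair $(C,C')$ that lives in a common $\calH_j^{(i)}$ and then sampling a uniformly random edge of $E_{C,C'}$. Consequently the overall fraction of deleted edges equals the pair-averaged deletion probability, which by \pref{claim:few-deletions} is at most $k \cdot 4^{k+1} \sqrt{r/n}$. Choosing the universal constant $B$ large enough so that this quantity is at most $1/2$ whenever $r \le n/B^k$ gives $|E(\widehat{K}_r)| \ge |E(K_r)|/2$, and since the vertex set is unchanged, $d(\widehat{K}_r) \ge d(K_r)/2$.

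Combining the two estimates produces the claimed bound. I do not anticipate any real obstacle: the only subtle point is verifying that the overall edge-deletion fraction matches the per-pair estimate, which is an immediate consequence of the equal-size property of the sets $E_{C,C'}$; apart from that, the argument is a direct assembly of previously proven facts with constants tracked carefully.
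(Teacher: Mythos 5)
Your proposal is correct and follows essentially the same route as the paper: lower bound $d(K_r)$ via \pref{obs:parameters-of-kikuchi}, Jensen's inequality, and the bound $p\le m(r/n)^{k/2-i}$ from \pref{obs:p-H-bounds}, then use \pref{claim:few-deletions} together with $r\le n/B^k$ for large $B$ to conclude that at most half the edges are deleted, so $d(\wh{K}_r)\ge d(K_r)/2$. Your explicit remarks that each $|\calH^{(i)}_j|\ge 2$ (justifying $\binom{x}{2}\ge x^2/4$) and that the equal sizes $|E_{C,C'}|=\alpha_t$ make the uniform-edge and uniform-pair samplings interchangeable are exactly the implicit steps in the paper's argument.
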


\paragraph{Spectral double counting.}
With a lower bound on $d(\wh{K}_r)$ in hand, we are now ready to perform our weighted spectral double counting argument to complete the proof of \pref{thm:odd-arity}.
\begin{proof}[Proof of \pref{thm:odd-arity}]
Recall that our goal is to prove that there is a small even cover in $\calH^{(i)}$, the largest piece obtained from the decomposition, and also recall that if $i\ge\frac{k+1}{2}$, then we are done by \pref{lem:large-intersections}.  Hence, we assume $i\le\frac{k-1}{2}$ for the rest of the proof.

Suppose there are no even covers in $\calH$ of size $\ell = r \log n$, then there are also none in $\calH^{(i)}$ from \pref{lem:good-subgraph-bound} we get:
\[
    \Norm{\Gamma^{-1/2} \wh{A} \Gamma^{-1/2}}_2 \le 4\sqrt{\frac{2\ell}{d(\wh{K}_r)}}.
\]
Thus, $\wh{A}\psdle 4\sqrt{\frac{2\ell}{d(\wh{K}_r)}}\Gamma$, and by taking the quadratic form with the all-ones vector, we get:
\[
    2|E(\wh{K}_r)| = \1^\top \wh{A}\1 \le 4\sqrt{\frac{2\ell}{d(\wh{K}_r)}}\cdot\tr(\Gamma) = 16\sqrt{\frac{2\ell}{d(\wh{K}_r)}}\cdot |E(\wh{K}_r)|,
\]
which implies
\[
    d(\wh{K}_r) \le 128\ell,
\]
and by our lower bound on $d(\wh{K}_r)$ from \pref{claim:avg-deg-lb}, we get
\[
    \frac{1}{8k^22^k}\cdot\parens*{\frac{r}{n}}^{\frac{k}{2}}\cdot m \le 128 r\log n,
\]
which we can rearrange as
\[
    m \le B^k n\log n \cdot \parens*{\frac{n}{r}}^{k/2-1}.
\]
for some large enough constant $B$.
Thus, if $m$ is lower bounded as in the theorem statement, there must be an even cover of size $\ell\log n$.
\end{proof}
\section{Strong refutation of semirandom \texorpdfstring{$k$}{k}-XOR}
\label{sec:refutation}

In this section, we show that our reweighted Kikuchi matrix and edge deletion process yield a significantly simpler analysis of strong refutation algorithms for semirandom $k$-XOR formulas and lose only a single $\log n$ factor in the density.
Combined with Feige's ``XOR principle''~\cite{Fei02,AOW15}, we also obtain refutation algorithms for all \emph{smoothed} Boolean CSPs. We will omit such reduction in this work and direct the reader to~\cite{GKM21} for a detailed exposition.

\begin{theorem}[Semirandom $k$-XOR refutation]
\label{thm:refutation-main-thm}
    Fix $k\in \N$.
    There is an algorithm with parameter $r\in \N$, $2k \leq r \leq n/8$ that takes as input a semirandom $k$-XOR instance
    \begin{equation*}
        \psi(x) = \frac{1}{m}\sum_{C\in \calH} b_C x_C
    \end{equation*}
    where $\calH$ is a $k$-uniform hypergraph with $n$ vertices and $m$ hyperedges, and each $b_C \in \pmo$ is chosen uniformly at random.
    The algorithm has the following guarantee: there is a universal constant $C$ such that if $m \geq C^k n\log n \cdot (\frac{n}{r})^{\frac{k}{2}-1} \eps^{-4}$ for $\eps \in (0,1/2)$, then with probability over $1-\frac{1}{\poly(n)}$ over $\{b_C\}_{C\in\calH}$,
    the algorithm runs in time $n^{O(r)}$ and certifies that $\psi(x) \leq \eps$.
\end{theorem}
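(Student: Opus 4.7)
The plan is to reduce the refutation task to a spectral bound on a signed variant of the reweighted Kikuchi matrix. The random $\pmo$ signs $b_C$ provide additional cancellation in the trace moment method so that, in expectation, only ``trivial'' closed walks---those using every hyperedge an even number of times---survive, and these are precisely the walks already counted in \pref{lem:even-kikuchi-norm-bound} and \pref{lem:good-subgraph-bound}. For a candidate assignment $x \in \pmo^n$, writing $y_S \coloneqq \prod_{i\in S} x_i$ satisfies $y_S y_T = x_{S\oplus T}$, so by designing a signed Kikuchi matrix $A$ such that $y^\top A y$ equals (up to a combinatorial constant) either the polynomial $\sum_{C\in \calH} b_C x_C$ or a Cauchy--Schwarz proxy for its square, a high-probability bound $A \psdle \lambda \Gamma$ translates via $y^\top A y \le \lambda\, y^\top \Gamma y = \lambda\, \tr(\Gamma)$ into a uniform bound $|\psi(x)| = O(\lambda)$.

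For even $k$, I would set $A_{S,T} = b_C$ whenever $S\oplus T = C\in\calH$ and the intersection-size condition of \pref{def:even-kikuchi-graph} holds, with all other entries zero. A direct expansion shows $y^\top A y$ equals $\sum_C b_C x_C$ times the per-hyperedge Kikuchi edge count, which by \pref{eqn:even-average-degree} is $\Theta(Nd/m)$ where $N = |V(K_r)|$. I would then run the trace moment method on $\Gamma^{-1/2}A\Gamma^{-1/2}$ with $\Gamma = D + d\Id$ exactly as in \pref{lem:even-kikuchi-norm-bound}. In $\E\,\tr((\Gamma^{-1}A)^\ell)$, each summand carries a product of random signs indexed by the walk's hyperedges, and it has zero expectation unless every hyperedge appears an even number of times; the expected trace is therefore bounded by the \emph{same} trivial-walk count as in \pref{lem:even-kikuchi-norm-bound}, giving $\E\,\tr((\Gamma^{-1}A)^\ell) \le 2^\ell n^r (\ell/d)^{\ell/2}$. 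Taking $\ell = \Theta(r \log n)$ (so that $n^{r/\ell} = O(1)$) and applying Markov's inequality yield $\Norm{\Gamma^{-1/2}A\Gamma^{-1/2}}_2 \lesssim \sqrt{\ell/d}$ with probability $1 - 1/\poly(n)$, and rearranging gives $|\psi(x)| \lesssim \sqrt{r\log n/d}$. Demanding this be $\le \eps$ forces $d \gtrsim r\log n/\eps^2$, recovering the density $m \gtrsim n\log n \cdot (n/r)^{k/2-1}/\eps^2$.

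For odd $k$, I would reuse the full machinery of \pref{sec:even-cover-odd}: decompose $\calH$ into $\calH^{(0)},\dots,\calH^{(k-1)}$ via \pref{alg:hypergraph-decomp-cover} (bounding each piece separately), handle the large-intersection regime $i \ge (k+1)/2$ by a reduction analogous to \pref{lem:large-intersections}, and for $i \le (k-1)/2$ Cauchy--Schwarz the linear form to produce
\begin{equation*}
\Paren{\sum_{C\in \calH^{(i)}} b_C x_C}^2 \;\le\; p\cdot |\calH^{(i)}| \;+\; p\sum_{j=1}^p \sum_{C\ne C'\in\calH^{(i)}_j} b_C b_{C'}\, x_{\widetilde{C}\oplus\widetilde{C}'}.
\end{equation*}
The second term is the quadratic form of the colored Kikuchi matrix of \pref{def:odd-kikuchi-graph} with signed entries $b_C b_{C'}$; I would apply the edge-deletion process (\pref{claim:few-deletions}) and the signed analog of \pref{lem:good-subgraph-bound}. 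Random-sign cancellation again restricts the expected trace to trivial walks, and this is legal because the deletion process depends only on $\calH$ and not on $\{b_C\}$, so the signs on the retained edges remain mutually independent. The $\eps^{-4}$ factor (versus $\eps^{-2}$ in the even case) is precisely what comes from squaring the target $|\psi(x)| \le \eps$ in the Cauchy--Schwarz step, and the diagonal term $p\cdot |\calH^{(i)}|$ is absorbed using $p \le m(r/n)^{k/2-i}$ from \pref{obs:p-H-bounds} together with the density assumption.

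The principal technical obstacle will be promoting the expectation bound on $\E\,\tr((\Gamma^{-1}A)^\ell)$ to a bound that holds with probability $1 - 1/\poly(n)$ while simultaneously absorbing the $\binom{2n}{r} \le n^r$ starting-vertex factor in the trace; this is handled by taking $\ell$ at the $r\log n$ scale so that $n^{r/\ell}$ is a constant and then invoking Markov. A secondary subtlety in the odd case is that the quadratic form uses the \emph{product} $b_C b_{C'}$ rather than a single sign, so one must verify that the expectation of a product of such pair-signs along a walk vanishes unless each individual hyperedge $C$ appears an even total number of times---this follows from independence of $\{b_C\}$ and the identity $b_C^2 = 1$.
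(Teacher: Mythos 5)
Your even-arity argument coincides with the paper's proof (signed Kikuchi matrix, reweighting by $\Gamma = D+d\Id$, expected trace restricted to trivial walks, Markov, $\eps^{-2}$ dependence), and your observation that the product signs $b_Cb_{C'}$ kill any walk in which some individual clause appears an odd number of times is also how the paper argues. The gaps are in the odd case. First, you propose to reuse \pref{alg:hypergraph-decomp-cover} unchanged and to dispose of the regime $i\ge\frac{k+1}{2}$ ``by a reduction analogous to \pref{lem:large-intersections}.'' That lemma certifies the \emph{existence of even covers}; the consecutive-difference hypergraph built there gives no certificate for the value of the signed linear form $\sum_C b_C x_C$, so it has no refutation analogue. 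The paper instead runs the same Cauchy--Schwarz + colored-Kikuchi argument for \emph{every} $t\in[k-1]$, and to make the diagonal term work it must change the decomposition: \pref{alg:hypergraph-decomposition-refutation} uses thresholds $\tau_t=\max\{1,(n/r)^{k/2-t}\}\cdot 4k\eps^{-2}$ and routes leftover clauses into $\calH^{(1)}$, which gives $p_t\tau_t\le 2m$ and hence $\frac{kp_tm_t}{m^2}\le\frac{\eps^2}{2}$ for all $t$. With the original thresholds your diagonal term is not absorbed: for $t\ge\frac{k+1}{2}$ the threshold is $2$, so $p_t$ can be $\Theta(m)$ and the term is $\Theta(k)$; and even for $t\le\frac{k-1}{2}$, \pref{obs:p-H-bounds} only gives $\frac{kp_tm_t}{m^2}\le k\Paren{\frac{r}{n}}^{1/2}$, which is not $\le\frac{\eps^2}{2}$ throughout the allowed range $r\le n/8$, $\eps\in(0,1/2)$, no matter how large $m$ is. The inflated thresholds in turn break \pref{claim:few-deletions} with $\eta=1$: to keep the deletion rate below $1/2$ one needs the $\eta$-version (\pref{lem:deletion-rate}) with $\eta=B^k\eps^{-2}$, and it is only because the $\eps$-dependence of $\eta$ and of $\tau_t$ cancel that the final dependence stays at $\eps^{-4}$.

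Second, and more fundamental: for refutation you cannot simply certify the pruned matrix. In the even-cover argument one only compares $\1^\top\wh{A}\1$ with $|E(\wh{K}_r)|$, so discarding half the edges is harmless; here the certified quantity is $(x^{\odot r})^\top A_b^{(t)}x^{\odot r}$ at an adversarially chosen $x$, and a spectral bound on $(x^{\odot r})^\top\wh{A}_b^{(t)}x^{\odot r}$ says nothing about the signed contribution of the deleted edges, which in the worst case is of order $\rho\cdot|E|$, i.e.\ roughly $k\rho$ after normalization --- forcing $\rho\lesssim\eps^2/k$ and a strictly worse density/$\eps$ trade-off if handled by the triangle inequality. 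The paper's fix is the extra \emph{equalizing} step (\pref{obs:uniform-deletion}): since every edge in $E_{C,C'}$ contributes the identical value $b_Cb_{C'}x_{C\oplus C'}$ to the quadratic form, deleting exactly a $\rho$ fraction of the edges of every pair yields $(x^{\odot r})^\top\wh{A}_b^{(t)}x^{\odot r}=(1-\rho)\,(x^{\odot r})^\top A_b^{(t)}x^{\odot r}$, so the certificate on $\wh{A}_b^{(t)}$ transfers exactly, losing only a factor $(1-\rho)^{-1}\le 2$. Your remark that the deletion is independent of the signs (so the expected-trace cancellation survives) is correct and is used in the paper, but without the modified thresholds, the $\eta\propto\eps^{-2}$ deletion parameter, and the uniform-deletion step, your odd-arity argument does not close as stated.
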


\begin{remark}[Refutation strength: dependence on $\eps$]
    For the even arity case, we actually obtain a stronger guarantee (weaker requirement) of $m \geq O(n\log n) \cdot (\frac{n}{r})^{\frac{k}{2}-1} \eps^{-2}$.
    For the odd arity case however, our analysis incurs a (likely suboptimal) dependence of $1/\eps^4$ on the refutation strength (i.e., the upper bound on the value of the input $k$-XOR instance), though improving the $1/\eps^5$ dependence of~\cite[Theorem 5.1]{GKM21}.
    In contrast, a $1/\eps^2$ dependence is known to hold for fully random $k$-XOR instances~\cite{RRS17}.
    Apart from a somewhat unsatisfying deficiency, this suboptimality turns out to be consequential -- in particular, it changes the threshold at which efficient FKO refutation witnesses exist for semirandom $k$-SAT (and other CSPs) by a polynomial factor in $n$.
    Finding the ``right'' dependence of $1/\eps^2$ (for the odd case) is an interesting open problem.
\end{remark}


Our refutation algorithm will utilize the same Kikuchi graphs from \pref{def:even-kikuchi-graph} and \pref{def:odd-kikuchi-graph} but with signs added to the edges in the natural way.

\begin{definition}[Signed Kikuchi graph]
\label{def:signed-kikuchi-graph}
    Let $\calH$ be a $k$-uniform hypergraph associated with $\pmo$ signs $\{b_C\}_{C\in\calH}$.
    For the even arity case, let $A_b$ be the signed adjacency matrix of the Kikuchi graph from \pref{def:even-kikuchi-graph} where each edge $S \xleftrightarrow{C} T$ has a sign $b_C$.
    For the odd arity case, let $A_b$ be the signed adjacency matrix of the Kikuchi graph from \pref{def:odd-kikuchi-graph} where each edge $S \xleftrightarrow{C,C'} T$ has a sign $b_C b_{C'}$.
\end{definition}

\subsection{Refuting semirandom even arity XOR}

In this section, we prove \pref{thm:refutation-main-thm} when $k$ is even.
As we will see in the short proof, our idea of the reweighted Kikuchi matrix from the hypergraph Moore bound naturally applies here, and in fact, we obtain the ``right'' $1/\eps^2$ dependence in this case, i.e., we can certify that $\psi(x) \leq \eps$ when $m \geq O(n\log n) \cdot (\frac{n}{r})^{\frac{k}{2}-1} \eps^{-2}$.


Recall that in the Kikuchi graph $(V,E)$, each $C\in \calH$ contributes $\alpha \coloneqq \frac{1}{2} \binom{k}{k/2} \binom{n-k}{r-k/2}$ edges in $E$, hence $|E| = \frac{1}{2}|V| d = m\alpha$.
Thus, it is clear that
\begin{equation*}
    \psi(x) = \frac{1}{m} \cdot \frac{1}{\alpha} \sum_{(S,T) \in E} b_{S \oplus T} x_{S \oplus T}
    = \frac{1}{\binom{n}{r}d}(x^{\odot r})^{\top} A_b x^{\odot r}
    \numberthis
    \label{eq:even-refutation}
\end{equation*}
where $x^{\odot r} \in \pmo^{\binom{n}{r}}$ and the $S$-entry of $x^{\odot r}$ is $x_S$ for $S\subseteq [n]$, $|S|=r$.

We now follow the same reweighting strategy: with $\Gamma = D + d\Id$, we bound the spectral norm of the reweighted Kikuchi matrix $\Norm{\Gamma^{-1/2} A_b \Gamma^{-1/2}}_2$ with an almost identical proof as \pref{lem:even-kikuchi-norm-bound}.

\begin{lemma}
\label{lem:even-signed-kikuchi-norm-bound}
    Let $k$ be even and $r\in \N$.
    Let $A_b$ be the signed Kikuchi graph with random $\pmo$ coefficients $\{b_C\}_{C\in \calH}$, and let $\Gamma = D + d\Id$ where $D$ is the degree matrix and $d$ is the average degree of the Kikuchi graph.
    Then, with probability at least $1-\frac{1}{\poly(n)}$ over the randomness of $\{b_C\}_{C\in \calH}$,
    \begin{equation*}
        \Norm{\Gamma^{-1/2}A_b \Gamma^{-1/2}}_2 \leq O\Paren{\sqrt{\frac{r\log n}{d}}} \mper
    \end{equation*}
\end{lemma}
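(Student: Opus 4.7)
The plan is to adapt the trace moment argument of \pref{lem:even-kikuchi-norm-bound} to the signed setting, letting the randomness of $\{b_C\}$ replace the no-small-even-cover hypothesis used there. Writing $\wt{A}_b = \Gamma^{-1/2} A_b \Gamma^{-1/2}$ and taking an even integer $\ell$, I would begin from
\[
\E \Norm{\wt{A}_b}_2^\ell \;\leq\; \E \tr(\wt{A}_b^\ell) \;=\; \E \tr\bigl((\Gamma^{-1}A_b)^\ell\bigr),
\]
and expand the right-hand side as a sum over closed length-$\ell$ walks $S_1 \to S_2 \to \cdots \to S_\ell \to S_1$ in the Kikuchi graph. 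Each walk carries a product of Rademacher signs $b_{C_1} b_{C_2}\cdots b_{C_\ell}$, where $C_i$ is the unique hyperedge with $S_i \oplus S_{i+1} = C_i$. Since the $b_C$ are independent $\pmo$, the expectation of this product vanishes unless every hyperedge of $\calH$ occurs an even number of times in the multiset $\{C_1,\dots,C_\ell\}$. So only the ``trivial'' walks of \pref{obs:closed-walks} contribute in expectation---exactly the walks counted in the proof of \pref{lem:even-kikuchi-norm-bound}---but now the cancellation comes for free from the random signs, with no structural assumption on $\calH$.

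Next, I would apply the exact same encoding used in \pref{lem:even-kikuchi-norm-bound}: a starting vertex $S_1 \in \binom{[n]}{r}$, and one bit per step indicating ``new hyperedge'' vs.\ ``old hyperedge''. A new-hyperedge step at $S_i$ contributes a sum over at most $d_{S_i}$ neighbors weighted by $\frac{1}{d_{S_i}+d} \leq \frac{1}{d_{S_i}+d}$, which is absorbed to at most $1$, while an old-hyperedge step has at most $\ell$ possible choices and contributes a factor of at most $\frac{\ell}{d_{S_i}+d} \leq \frac{\ell}{d}$. Because every hyperedge appears an even number of times, at least $\ell/2$ of the $\ell$ steps are old-hyperedge steps, and the number of sign patterns is $2^\ell$. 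This gives
\[
\E \tr\bigl((\Gamma^{-1}A_b)^\ell\bigr) \;\leq\; 2^\ell \binom{n}{r} \Paren{\frac{\ell}{d}}^{\ell/2} \;\leq\; 2^\ell \, n^r \Paren{\frac{\ell}{d}}^{\ell/2}.
\]

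Finally, to convert this moment estimate into a high-probability statement, I would apply Markov's inequality to $\Norm{\wt{A}_b}_2^\ell$: for any $t > 0$,
\[
\Pr\Bracks{\Norm{\wt{A}_b}_2 \geq t} \;\leq\; t^{-\ell} \cdot 2^\ell \, n^r \Paren{\frac{\ell}{d}}^{\ell/2}.
\]
Choosing $\ell = \Theta(r \log n)$ makes the factor $n^{r/\ell}$ a constant, and then selecting $t = C\sqrt{r\log n / d}$ for a sufficiently large absolute constant $C$ drives the failure probability down to $1/\poly(n)$. I do not expect any serious obstacle: this is a verbatim port of the unsigned trace argument, with the randomness-driven cancellation playing the role of the girth assumption. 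The only minor caveat is the regime check $\ell \leq d$ needed for the bound $\frac{\ell}{d_{S_i}+d} \leq \frac{\ell}{d}$ to be meaningful, which holds comfortably under the density hypothesis of \pref{thm:refutation-main-thm}; in any case, if $\ell > d$ one can pessimistically treat every step as a new-hyperedge step without affecting the final bound.
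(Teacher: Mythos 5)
Your proposal is correct and follows essentially the same route as the paper: expand $\E_b\tr((\Gamma^{-1}A_b)^\ell)$, note that the independent Rademacher signs kill every walk in which some hyperedge appears an odd number of times so only the trivial walks of \pref{lem:even-kikuchi-norm-bound} survive, reuse that lemma's encoding bound $2^\ell n^r(\ell/d)^{\ell/2}$, and finish with Markov's inequality at $\ell = \Theta(r\log n)$. The minor regime check $\ell \le d$ is handled exactly as in the paper, so there is nothing missing.
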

\begin{proof}
    Let $\wt{A}_b = \Gamma^{-1/2} A_b \Gamma^{-1/2}$.
    We again use the trace power method $\norm{\wt{A}_b}_2^\ell \leq \tr((\Gamma^{-1} A_b)^\ell)$ where we choose an even $\ell = 2\ceil{r\log_2 n}$.
    Observe that in expectation, $\E_b\tr((\Gamma^{-1} A_b)^\ell)$ counts the closed walks that use each hyperedge an even number of times.
    This is exactly the same as \pref{lem:even-kikuchi-norm-bound} where we count closed walks in an unsigned Kikuchi graph assuming there is no even cover of size $\leq \ell$.
    Thus, \pref{lem:even-kikuchi-norm-bound} shows that
    \begin{equation*}
        \E_b\tr((\Gamma^{-1} A_b)^\ell) \leq 2^\ell n^r \Paren{\frac{\ell}{d}}^{\ell/2} \leq O\Paren{\frac{\ell}{d}}^{\ell/2}
    \end{equation*}
    when $\ell \geq r\log_2 n$.
    Then, by Markov's inequality, for any $\lambda > 0$,
    \begin{equation*}
        \Pr_b\Brac{\norm{\wt{A}_b}_2 \geq \lambda}
        = \Pr_b\Brac{\norm{\wt{A}_b}_2^{\ell} \geq \lambda^\ell} \leq \lambda^{-\ell} \cdot \E_b \tr((\Gamma^{-1} A_b)^\ell)
        \leq O\Paren{\frac{\ell}{\lambda^2 d}}^{\ell/2}
    \end{equation*}
    Choosing $\lambda = O(\sqrt{\ell/d})$ completes the proof.
\end{proof}

We can complete the proof of \pref{thm:refutation-main-thm} for even $k$.
\begin{proof}[Proof of \pref{thm:refutation-main-thm} for even $k$]
    Let $A_b$ be the signed Kikuchi graph with signs $\{b_C\}_{C\in \calH}$, let $\Gamma = D + d\Id$ where $D$ is the degree matrix and $d$ is the average degree of the Kikuchi graph, and let $\wt{A}_b = \Gamma^{-1/2} A_b \Gamma^{-1/2}$.
    The certification algorithm is simply to compute $\|\wt{A}_b\|_2$.
    Since $A_b \preceq \norm{\wt{A}_b}_2\cdot \Gamma$, and $\tr(\Gamma)= 2\binom{n}{r}d$, by \pref{lem:even-signed-kikuchi-norm-bound},
    \begin{equation*}
        \psi(x) = \pref{eq:even-refutation} \leq \frac{1}{\binom{n}{r}d} \norm{\wt{A}_b}_2 \cdot \tr(\Gamma) \leq O\Paren{\sqrt{\frac{r\log n}{d}}}
    \end{equation*}
    using the fact that $x^{\odot r} \in \pmo^{\binom{n}{r}}$ and $(x^{\odot r})^\top \Gamma x^{\odot r} = \tr(\Gamma)$.
    There is some constant $C$ such that when $m \geq C n\log n \cdot (\frac{n}{r})^{\frac{k}{2}-1}\eps^{-2}$,
    by \pref{eqn:even-average-degree} the average degree $d \geq \frac{1}{2}(\frac{r}{n})^{k/2}m = \frac{C}{2}r\log n \cdot \eps^{-2}$, thus giving us $\psi(x) \leq \eps$.
    This completes the proof.
\end{proof}

\subsection{Refuting semirandom odd arity XOR}


Our proof of \pref{thm:refutation-main-thm} for the odd arity case closely mimics the steps taken in proving the hypergraph Moore bound for odd arity hypergraphs (\pref{thm:odd-arity}).
Given a semirandom $k$-XOR instance $\psi$ on hypergraph $\calH$ with random signs $\{b_C\}_{C\in \calH}$, we first apply the following hypergraph decomposition algorithm (a variant of \pref{alg:hypergraph-decomp-cover}) to decompose the hypergraph into subhypergraphs $\calH^{(1)},\dots,\calH^{(k-1)}$.
The main difference compared to \pref{alg:hypergraph-decomp-cover} is that in the final step, we add the ``leftover'' hyperedges to $\calH^{(1)}$ instead of an extra $\calH^{(0)}$.

\noindent\rule{16cm}{0.4pt}
\begin{algorithm}[Hypergraph decomposition]
\label{alg:hypergraph-decomposition-refutation}
    Given a $k$-uniform hypergraph $\calH$ on $n$ vertices and $m$ hyperedges, and thresholds $\tau_1,\dots,\tau_{k-1} \geq 2$, we partition $\calH$ into hypergraphs $\calH^{(1)},\dots,\calH^{(k-1)}$ via the following algorithm.
    \begin{enumerate}
        \item Set $t = k-1$ and $\calH_{\mathrm{current}}\coloneqq \calH$.
        \item Set counter $s = 1$.
        While there is $T\subseteq[n]$ such that $|T|=t$ and $\Abs{\{C\in \calH_{\mathrm{current}} : T\subseteq C\}} \ge \tau_t$:
        \begin{enumerate}
            \item Choose $T$ satisfying the condition and let $\calH^{(t)}_s$ be a subset of $\{C\in \calH_{\mathrm{current}} : T\subseteq C\}$ of size $\tau_t$.
            \item Add all clauses in $\calH^{(t)}_s$ to $\calH^{(t)}$.
            \item Delete all clauses in $\calH^{(t)}_s$ to $\calH_{\mathrm{current}}$.
            \item Increment $s$ by $1$.
        \end{enumerate}
        \item Decrement $t$ by 1.  If $t > 0$, go back to step $2$; otherwise take the remaining clauses in $\calH_{\mathrm{current}}$ and partition them into $n$ parts $F_1,\dots,F_n$ where each clause $C$ goes to some $F_i$ such that $i\in C$.
        Add $F_1,\dots,F_n$ to $\calH^{(1)}$ and terminate.
    \end{enumerate}
\end{algorithm}
\noindent\rule{16cm}{0.4pt}

\paragraph{Notations and parameters.}
Throughout this section we will use the following notations.
\begin{itemize}
    \item In \pref{alg:hypergraph-decomposition-refutation}, we set thresholds $\tau_t = \max\braces*{1, \parens*{\frac{n}{r} }^{\frac{k}{2}-t}} \cdot 4k \eps^{-2}$.

    \item In the decomposition, each $\calH^{(t)}$ contains $p_t$ groups $\calH_{1}^{(t)},\dots, \calH_{p_t}^{(t)}$ where group $\calH_i^{(t)}$ has a center $T_i^{(t)}$ of size $t$,
    and for each $C\in \calH^{(t)}_i$, we write $\wt{C} = C\setminus T_i^{(t)}$.

    \item Each $|\calH_i^{(t)}| = \tau_t$, with the exception that $|\calH_i^{(1)}| \leq \tau_1$ may have different sizes (the leftover hyperedges in \pref{alg:hypergraph-decomposition-refutation}).
    Let $m_t \coloneqq \sum_{i=1}^{p_t} |\calH_i^{(t)}|$ be the total number of hyperedges in $\calH^{(t)}$.

    \item When $t=1$ and $m \geq C^k n\log n \cdot (\frac{n}{r})^{\frac{k}{2}-1}\eps^{-4}$ for a large enough constant $C$, we have $m \geq n \tau_1$, hence $p_1 \leq \frac{m}{\tau_1} + n \leq \frac{2m}{\tau_1}$.
    Thus, we will use $p_t \tau_t \leq 2m$ for all $t\in[k-1]$.

    \item For each $t\in[k-1]$, the colored Kikuchi graph $(V,E)$ obtained from $\calH^{(t)} = (\calH_1^{(t)},\dots, \calH_{p_t}^{(t)})$ (from \pref{def:odd-kikuchi-graph}) has edges $|E| = \alpha_t \sum_{i=1}^{p_t}\binom{|\calH_i^{(t)}|}{2} \leq \frac{1}{2}\alpha_t m_t \tau_t$, where $\alpha_t \approx (\frac{2n}{r})^{r-(k-t)}$ is the number of edges contributed by each distinct pair $C,C'\in \calH_i$ (see \pref{obs:parameters-of-kikuchi}).
\end{itemize}

\noindent With these notations and parameters in mind, we can write $\psi(x)$ as
\begin{align*}
    \psi(x) &= \frac{1}{m} \sum_{t=1}^{k-1} \sum_{C\in \calH^{(t)}} b_C x_C
    = \frac{1}{k}\sum_{t=1}^{k-1} \psi_t(x) \\
    \quad \text{where} \quad
    \psi_t(x) &\coloneqq
    \frac{k}{m}\sum_{i=1}^{p_t} \sum_{C\in \calH_i^{(t)}} b_C x_C = \frac{k}{m} \sum_{i=1}^{p_t} x_{T_i} \sum_{C \in \calH_i^{(t)}} b_{C} x_{\wt{C}} \mper
    \numberthis \label{eqn:psi-t}
\end{align*}
Essentially, each $\psi_t$ is the sub-instance of $\psi$ restricted to the partition $\calH^{(t)}$.
Recall that for the purpose of showing existence of even covers, we only need to focus on one $\calH^{(t)}$.
For refutation however, we need to certify a bound on $\psi_t(x)$ for all $t \in [k-1]$.

\begin{lemma}[Refuting each $\psi_t$] \label{lem:refuting-psi-t}
    Fix an odd $k\in \N$, $t \in [k-1]$, and let $2k \leq r \leq n/8$.
    There is a constant $C$ such that given a semirandom $k$-XOR instance $\psi$ with $n$ variables and $m \geq C^k n\log n (\frac{n}{r})^{\frac{k}{2}-1} \eps^{-4}$ clauses for $\eps \in (0,1/2)$, and suppose $\psi_t$ is the subinstance from \pref{eqn:psi-t} obtained by the hypergraph decomposition algorithm (\pref{alg:hypergraph-decomposition-refutation}), then with probability $1-\frac{1}{\poly(n)}$ over the random signs, we can certify that $\psi_t(x) \leq \eps$ in $n^{O(r)}$ time.
\end{lemma}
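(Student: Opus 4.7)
The plan is to mimic the even-arity refutation proof, with the odd-arity colored Kikuchi machinery and the edge-deletion process developed in \pref{sec:even-cover-odd} replacing the plain Kikuchi matrix. In brief: apply Cauchy--Schwarz to turn $\psi_t$ into a quadratic form in the signed colored Kikuchi matrix $A_b$ of $\calH^{(t)}$; delete a small fraction of ``bad'' edges so the resulting adjacency matrix $\wh A_b$ satisfies a reweighted spectral bound via the trace moment method; and then read off the desired certificate on $\psi_t$ from the PSD inequality $\wh A_b \psdle \lambda \Gamma$.

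\textbf{Cauchy--Schwarz and lifting.}  Set $Y_i := \sum_{C \in \calH_i^{(t)}} b_C x_{\wt C}$, so that $\psi_t(x) = \frac{k}{m} \sum_{i=1}^{p_t} x_{T_i^{(t)}} Y_i$.  Since $|x_{T_i^{(t)}}|=1$, Cauchy--Schwarz yields $\psi_t(x)^2 \le \frac{k^2 p_t}{m^2}\sum_i Y_i^2$ and $\sum_i Y_i^2 = m_t + \sum_i \sum_{C \neq C' \in \calH_i^{(t)}} b_C b_{C'}\, x_{\wt C \oplus \wt C'}$. To read the off-diagonal sum off $A_b$ (\pref{def:signed-kikuchi-graph}), lift $x$ to $z \in \pmo^{\binom{2n}{r}}$ by $z_S := \prod_{a \in S^{(1)}} x_a \cdot \prod_{a \in S^{(2)}} x_a$. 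For every colored Kikuchi edge $S \xleftrightarrow{C,C'} T$, $S \oplus T = \wt C^{(1)} \oplus \wt C'^{(2)}$ as multisets over $[n]\times[2]$, so $z_S z_T = x_{\wt C \oplus \wt C'}$. Hence $z^\top A_b z$ equals, up to the per-pair multiplicity $2\alpha_t$ from \pref{obs:parameters-of-kikuchi}, the off-diagonal sum above; the small number of pairs $C,C'$ whose intersection strictly exceeds the center $T_i^{(t)}$ (and hence have $|E_{C,C'}| < \alpha_t$) is controlled by \pref{obs:small-big-intersect}-style counting against the thresholds $\tau_s$.

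\textbf{Edge deletion and spectral bound.}  Run the edge-deletion process of \pref{sec:even-cover-odd} on the colored Kikuchi graph of $\calH^{(t)}$ with thresholds $\tau_s = \max\{1,(n/r)^{k/2-s}\}\cdot 4k\eps^{-2}$.  The computation of \pref{claim:few-deletions} carries over essentially verbatim to the inflated $\tau_s$ and shows that the fraction of deleted edges is at most $\tfrac12$ when $r/n \le B^{-k}$ for $B$ large enough. In the surviving subgraph $\wh K_r$, every vertex has at most one edge through each clause, which is exactly the $\eta = O(1)$ hypothesis needed in (the signed analog of) \pref{lem:good-subgraph-bound}.  In $\E\tr\parens*{(\Gamma^{-1}\wh A_b)^\ell}$ with $\ell = 2\lceil r\log n\rceil$, expectation over the random signs $\{b_C\}$ kills every walk that uses some clause an odd number of times, and encoding the surviving walks one bit per step exactly as in \pref{lem:good-subgraph-bound} gives $\E\tr\parens*{(\Gamma^{-1}\wh A_b)^\ell} \le n^r (C\ell/d)^{\ell/2}$. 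Markov's inequality then yields $\Norm{\Gamma^{-1/2}\wh A_b \Gamma^{-1/2}}_2 \lesssim \sqrt{r\log n/d}$ with probability $1-1/\poly(n)$.

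\textbf{Assembly and main obstacle.}  From $-\lambda\Gamma \psdle \wh A_b \psdle \lambda\Gamma$ and $|z_S|=1$, $|z^\top \wh A_b z| \le \lambda\tr(\Gamma) \lesssim \sqrt{r\log n/d}\cdot|V|d$. The contribution of the deleted edges to $z^\top A_b z$ is bounded by the number of deleted edges ($\le |E|/2$). Chasing through the Cauchy--Schwarz inequality, using $p_t\tau_t \le 2m$ and the lower bound $d \gtrsim \eps^{-2}(r/n)^{k/2} m_t$ from \pref{obs:parameters-of-kikuchi}, one verifies $\psi_t(x)^2 \lesssim \frac{1}{\tau_t} + \sqrt{r\log n/d} \lesssim \eps^2$ under the hypothesis $m \gtrsim C^k n\log n\,(n/r)^{k/2-1}\eps^{-4}$; the regime $m_t \le \eps m/k$ is handled by the trivial bound $|\psi_t| \le km_t/m$. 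The principal delicate step is the deletion-rate analysis with the inflated thresholds $\tau_s \propto \eps^{-2}$; this inflation is itself forced by the diagonal term $1/\tau_t$ arising from Cauchy--Schwarz, and is precisely what produces the suboptimal $\eps^{-4}$ (rather than $\eps^{-2}$) dependence flagged in the remark after \pref{thm:refutation-main-thm}.
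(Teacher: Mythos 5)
Your overall architecture (Cauchy--Schwarz, lift to the colored Kikuchi quadratic form, edge deletion, trace method on $\Gamma^{-1/2}\wh{A}_b\Gamma^{-1/2}$) matches the paper, but two steps in your proposal break down, and they are exactly where the paper's proof does something you omit. First, the assembly step: you pass from $z^\top \wh{A}_b z$ back to $z^\top A_b z$ by bounding the deleted edges' contribution by their number. After normalizing by the prefactor $\frac{k p_t}{2\alpha_t m^2}$, a deleted set of size $\rho|E| \le \tfrac12 \alpha_t m_t\tau_t/2$ contributes up to order $\frac{k p_t \tau_t m_t}{m^2} = O(k)$ to $f_t(x)$ --- a constant, not $O(\eps^2)$, so a deletion rate of $1/2$ is fatally lossy if handled this way (and since $x$ is adversarial, no cancellation among the signed deleted terms can be certified). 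The paper's fix is the extra \emph{equalizing} step in the deletion process (\pref{obs:uniform-deletion}): additional edges are deleted so that every pair $C,C'$ loses exactly the same fraction $\rho$; since all edges in $E_{C,C'}$ carry the same sign $b_Cb_{C'}$ and the same value $z_Sz_T = x_{\wt{C}\oplus\wt{C}'}$, this yields the exact identity $(x^{\odot r})^\top \wh{A}_b^{(t)} x^{\odot r} = (1-\rho)\,(x^{\odot r})^\top A_b^{(t)} x^{\odot r}$ in \pref{eqn:A-hat-quadratic-form}, and hence $f_t \le \frac{1}{1-\rho}\wh{f}_t \le 2\wh{f}_t$. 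This idea is absent from your proposal and cannot be replaced by counting deleted edges.

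Second, your claim that the deletion analysis ``carries over essentially verbatim'' with the inflated thresholds and still gives $\rho\le 1/2$ while keeping $\eta = O(1)$ (``every vertex has at most one edge through each clause'') is incorrect. With $\tau_s = \max\{1,(n/r)^{k/2-s}\}\cdot 4k\eps^{-2}$, redoing \pref{claim:few-deletions} with threshold $1$ gives a deletion probability of order $4^k k\,\eps^{-2}\sqrt{r/n}$, which is not at most $1/2$ throughout the allowed range $2k\le r\le n/8$ when $\eps$ is small (the lemma permits $\eps$ depending on $n$). The paper instead deletes only edges where a clause has more than $\eta = B^k\eps^{-2}$ incident edges at a vertex; the extra $1/\eta$ from Markov in \pref{lem:deletion-rate} cancels the $\eps^{-2}$ inflation of the $\tau_s$ and gives $\rho\le 1/2$ unconditionally, at the cost of a $\sqrt{\eta}\approx \eps^{-1}$ factor in the spectral bound of \pref{lem:good-subgraph-bound} --- which is the actual source of the $\eps^{-4}$ dependence in $m$, not merely the threshold inflation as you suggest. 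With these two repairs (uniform deletion and $\eta\propto\eps^{-2}$), the rest of your outline goes through as in the paper; as written, both the $\rho\le 1/2$ claim and the treatment of deleted edges are gaps.
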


\pref{lem:refuting-psi-t} immediately completes the proof of \pref{thm:refutation-main-thm} for odd $k$.

\begin{proof}[Proof of \pref{thm:refutation-main-thm} by \pref{lem:refuting-psi-t}]
    Given the hypergraph $\calH$, we apply the hypergraph decomposition algorithm (\pref{alg:hypergraph-decomposition-refutation}) with thresholds $\tau_1,\dots, \tau_{k-1}$ and obtain subinstances $\psi_1,\dots, \psi_{k-1}$ as in \pref{eqn:psi-t}.
    For each $t\in[k-1]$, we can certify that $\psi_t(x) \leq \eps$ by \pref{lem:refuting-psi-t} with high probability, which immediately implies the desired bound $\psi(x) \leq \eps$.
\end{proof}

\paragraph{Edge deletion process.}
The proof of \pref{lem:refuting-psi-t} requires deleting the ``bad''  edges from the signed Kikuchi matrix $A_b^{(t)}$ via a similar deletion process as the one used in the proof of \pref{thm:odd-arity}, but with some parameter $\eta > 1$ instead of $1$ and an additional \emph{equalizing} step:
\begin{displayquote}
    Start with the colored Kikuchi graph, and delete every edge $\{S,T\}$ caused by a pair of clauses $C, C'\in \calH_i^{(t)}$ such that $S$ or $T$ has more than $\eta$ edges that $C$ or $C'$ participates in.

    Suppose $\rho < 1$ is the maximum fraction of edges deleted among all pairs of clauses. Then, for every $i\in[p_t]$ and every distinct pair $C,C'\in \calH^{(t)}_i$, we delete (additional) edges caused by $C,C'$ arbitrarily such that exactly $\rho$ fraction of edges are deleted.
\end{displayquote}

\begin{observation}[Uniform deletion] \label{obs:uniform-deletion}
    The final step in the above edge deletion process ensures that every pair $C,C'$ contributes the \emph{same} number of edges ($(1-\rho)\alpha_t$ to be exact) in the Kikuchi graph.
\end{observation}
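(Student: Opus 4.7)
The plan is to verify \pref{obs:uniform-deletion} by directly tracking how many edges are deleted from the contribution of each pair of clauses through the two-stage deletion process. First, I would recall from \pref{obs:parameters-of-kikuchi} that, \emph{before} any deletions, every distinct pair $C, C' \in \calH_i^{(t)}$ contributes exactly $|E_{C,C'}| = \alpha_t$ edges to the colored Kikuchi graph, independently of $i$ and of which specific pair is chosen. This uniform starting count is what makes the eventual equalization meaningful.

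Next, let $\rho_{C,C'}$ denote the fraction of $E_{C,C'}$ removed by the first stage of the deletion process (the ``bad edge'' removal triggered when some endpoint has more than $\eta$ incident edges involving $C$ or $C'$). By definition, $\rho = \max_{C,C'} \rho_{C,C'}$ over all pairs from all groups $\calH_i^{(t)}$, so $\rho_{C,C'} \le \rho$ for every pair. The second stage of the deletion process then deletes additional edges from $E_{C,C'}$ (arbitrarily chosen) until exactly a $\rho$ fraction of $E_{C,C'}$ has been removed in total; this is always feasible because the deficit $\rho - \rho_{C,C'} \geq 0$. Hence, after the equalization, each pair contributes exactly $(1-\rho)\alpha_t$ edges, which is the claim.

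The one subtlety that I would flag (rather than a real obstacle) is that the whole construction is only meaningful when $\rho < 1$. In the odd-arity Moore bound proof this follows from \pref{claim:few-deletions}; for the refutation setting we are in now, a straightforwardly modified version of that computation --- with the threshold $1$ replaced by the general $\eta$ --- gives a bound of the form $\rho \lesssim k\, 4^{k+1}\sqrt{r/n}/\eta$, which can be made smaller than, say, $\tfrac{1}{2}$ by choosing $\eta$ appropriately in terms of $k, r, n$, and $\eps$. Aside from this bookkeeping, the observation itself is an immediate consequence of the construction of the equalizing step.
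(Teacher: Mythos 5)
Your proposal is correct and matches the paper's (implicit) reasoning exactly: since every pair $C,C'$ contributes exactly $\alpha_t$ edges before deletion and the equalizing step removes extra edges until precisely a $\rho$ fraction is gone for each pair (feasible because $\rho$ is the maximum over all pairs), each pair ends with $(1-\rho)\alpha_t$ edges. Your side remark that $\rho<1$ must be verified is also consistent with the paper, which establishes $\rho\le 1/2$ later via \pref{lem:deletion-rate} with the chosen $\tau_s$ and $\eta$.
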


Mirroring the proof of \pref{claim:few-deletions} yields the following generalization.

\begin{lemma}[Deletion rate]
\label{lem:deletion-rate}
    Suppose a subhypergraph $\calH^{(i)}$ satisfies that for any $s \geq i$ and any $T\subseteq [n]$ with $|T| = s$, the number of hyperedges in $\calH^{(i)}$ containing $T$ is at most $\tau_s$, then the deletion process with parameter $\eta \geq 1$ satisfies
    \begin{equation*}
        \Pr_{\{S,T\}\sim E_{C,C'}} \bracks*{ \{S,T\}\textnormal{ deleted} } \leq \frac{4^k}{\eta} \cdot \sum_{s=i}^{\floor{\frac{k+i}{2}}} \tau_s \Paren{\frac{r}{n}}^{\floor{\frac{k+i}{2}} - s} \mper
    \end{equation*}
\end{lemma}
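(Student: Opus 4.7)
The argument mirrors that of \pref{claim:few-deletions}, with two modifications: the deletion threshold is $\eta$ rather than $1$ (which yields a factor of $1/\eta$ via Markov's inequality), and the specific intersection bound coming from \pref{obs:small-big-intersect} is replaced by the more general hypothesis on $\tau_s$.

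First, I would apply the same four-way union bound as in \pref{claim:few-deletions}: an edge $\{S,T\} \in E_{C,C'}$ is deleted exactly when one of the endpoints $S^{(1)}, S^{(2)}, T^{(1)}, T^{(2)}$ has more than $\eta$ incident edges through $C$ or $C'$. By the symmetry of the distribution of $\{S,T\}$ under the relabelings $S \leftrightarrow T$ and $C \leftrightarrow C'$, all four probabilities coincide, so it suffices to bound $4 \cdot \Pr[\,\#\{C'' \in \calH_j^{(i)} : C'' \ne C,\ C'' \arrow_C S^{(1)}\} > \eta\,]$. Markov's inequality immediately converts this into $\frac{4}{\eta}\sum_{C'' \ne C} \Pr[C'' \arrow_C S^{(1)}]$.

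Next, I would group the $C''$'s by $s = |C'' \cap C|$ (which ranges over $i \le s \le k-1$, since both $C$ and $C''$ contain $T_j^{(i)}$). For each $s$, the number of hyperedges $C'' \in \calH_j^{(i)}$ whose intersection with $C$ is a fixed set $U \subseteq C$ of size $s$ is at most $\tau_s$ by the hypothesis, and there are at most $\binom{k}{s} \le 2^k$ such subsets $U$. For each such $C''$, conditionally on the type of the edge $(S,T)$, the event $C'' \arrow_C S^{(1)}$ requires that $S^{(1)}$ include a specific number of additional elements from $\wt{C}'' \setminus \wt{C}$, which happens with probability at most $2^k (r/n)^{\lfloor (k-i)/2 \rfloor - (s-i)} = 2^k (r/n)^{\lfloor (k+i)/2 \rfloor - s}$ --- exactly the pointwise estimate derived in the proof of \pref{claim:few-deletions}. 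This bound is meaningful (less than $1$) precisely for $s \le \lfloor (k+i)/2 \rfloor$; for larger $s$ the exponent would be negative, in which case one instead uses the trivial bound $\Pr[\cdots] \le 1$, and the corresponding contribution is absorbed into the $s = \lfloor(k+i)/2\rfloor$ term since the $\tau_s$ are non-increasing in $s$ (as any set containing a larger $U$ also contains each of its size-$\lfloor(k+i)/2\rfloor$ subsets). Combining these estimates and absorbing constants into $4^k$ (since $4 \cdot 2^k \cdot 2^k = 4^{k+1}$, which we compensate for by slightly loosening the bookkeeping) yields the claimed inequality.

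\textbf{Main obstacle.}  The technically delicate point is the treatment of large intersections $s > \lfloor (k+i)/2 \rfloor$, for which the ``natural'' probability estimate breaks down because $S^{(1)} \cap \wt{C}$ already contains most of $\wt{C}''$. Here one must either (i) argue that these $C''$ are so few (controlled by $\tau_s$ with large $s$) that they fold into the stated sum, or (ii) refine the probability computation by splitting on the size of $S^{(1)} \cap \wt{C} \cap \wt{C}''$; either route requires a little care to keep constants clean but otherwise follows the template of \pref{claim:few-deletions}.
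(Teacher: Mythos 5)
Your proposal is correct and follows essentially the same route as the paper, whose proof simply reruns \pref{claim:few-deletions} with Markov's inequality at threshold $\eta$ (giving the $1/\eta$ factor) and then bounds the number of clauses with a given intersection pattern by $\tau_s$ in \pref{eqn:deletion-rate-bound}; in fact you are more explicit than the paper about the $s>\floor{\frac{k+i}{2}}$ tail, which you correctly absorb using the trivial probability bound together with the fact that hyperedges containing a large set also contain its size-$\floor{\frac{k+i}{2}}$ subsets. The residual constant bookkeeping you flag ($4\cdot 2^k\cdot\binom{k}{s}$ versus the stated $4^k$) is a looseness already present in the paper (compare the $k\cdot 4^{k+1}$ of \pref{claim:few-deletions}) and is immaterial since $\eta=B^k\eps^{-2}$ with $B$ large.
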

\begin{proof}
    The proof is identical to the proof of \pref{claim:few-deletions}.
    \hyperref[eq:prob-delete]{Eq.~\textup{(\ref*{eq:prob-delete})}} holds with an additional $1/\eta$ factor due to Markov's inequality.
    The lemma statement then follows immediately from \pref{eqn:deletion-rate-bound}.
\end{proof}

\paragraph{Proof of \pref{lem:refuting-psi-t} via the Cauchy-Schwarz trick and the deletion process.}

\begin{proof}[Proof of \pref{lem:refuting-psi-t}]
    We apply the Cauchy-Schwarz trick to $\psi_t$ from \pref{eqn:psi-t}:
    \begin{align*}
        \psi_t(x)^2 &\leq \frac{k}{m^2} \sum_{i=1}^{p_t} x_{T_i}^2 \cdot \sum_{i=1}^{p_t} \Paren{\sum_{C\in\calH_i^{(t)}} b_{C} x_{\wt{C}}}^2
        \leq \frac{k p_t}{m^2} \sum_{i=1}^{p_t} \sum_{C,C'
        \in \calH_i^{(t)}} b_{C} b_{C'} x_{\wt{C}} x_{\wt{C}'} \\
        &\leq \frac{k p_t m_t}{m^2} +  \frac{k p_t}{m^2} \sum_{i=1}^{p_t} \sum_{C\neq C' \in \calH_i^{(t)}} b_{C} b_{C'} x_{C \oplus C'}
        \numberthis \label{eqn:psi-t-squared}
    \end{align*}
    since $x\in \pmo^n$, $b_C\in \pmo$ and $\sum_{i=1}^{p_t} |\calH_i^{(t)}| = m_t$.
    For the first term, since for all $t\in[k-1]$, we set $\tau_t \geq 4k\eps^{-2}$ and $p_t \leq 2m/\tau_t \leq \frac{m \eps^2}{2k}$, thus
    \begin{equation*}
        \frac{kp_t m_t}{m^2} \leq \frac{\eps^2}{2} \mper
        \numberthis \label{eqn:psi-t-first-term}
    \end{equation*}
    We can now focus our attention on the second term in \pref{eqn:psi-t-squared}.

    Given $\calH^{(t)}$ and its partitions $\calH_1^{(t)},\dots, \calH_{p_t}^{(t)}$ of size $\tau_t$, and signs $\{b_C\}_{C\in \calH^{(t)}}$, let $A_b^{(t)}$ be the signed Kikuchi matrix defined in \pref{def:signed-kikuchi-graph}, which is the signed version of the colored Kikuchi graph $(V,E)$ from \pref{def:odd-kikuchi-graph}.
    Recall from \pref{obs:parameters-of-kikuchi} that each distinct pair $C,C' \in \calH_i^{(t)}$ contributes $\alpha_t \approx (\frac{2n}{r})^{r-(k-t)}$ edges in the graph.
    Thus, similar to \pref{eq:even-refutation} in the even case, we can write the second term of \pref{eqn:psi-t-squared} as a quadratic form:
    \begin{equation*}
        f_t(x) \coloneqq \frac{k p_t}{m^2} \sum_{i=1}^{p_t} \sum_{C\neq C' \in \calH_i^{(t)}} b_C b_{C'} x_{C \oplus C'}
        = \frac{k p_t}{2\alpha_t m^2} (x^{\odot r})^\top A_b^{(t)} x^{\odot r} \numberthis \label{eqn:ft-quadratic-form}
    \end{equation*}
    where $x^{\odot r}\in \pmo^{\binom{2n}{r}}$ such that for $S\in [n]\times [2]$ with $S = (S^{(1)}, S^{(2)})$ (green and blue elements), the $S$-entry of $x^{\odot r}$ is $x_{S^{(1)} \oplus S^{(2)}}$.

    We proceed to certify an upper bound on $f_t(x)$.
    Given the signed Kikuchi matrix $A_b^{(t)}$, we first apply the deletion process with parameter $\eta = B^k \eps^{-2}$ for some large enough constant $B$.
    With the chosen thresholds $\tau_s$, \pref{lem:deletion-rate} states that the deletion probability $\rho$ is at most
    \begin{equation*}
        \rho \leq \frac{4^k}{\eta} \cdot \sum_{s=t}^{\floor{\frac{k+t}{2}}} 4k\eps^{-2} \cdot \max\braces*{1, \parens*{\frac{n}{r} }^{\frac{k}{2}-s} } \cdot \Paren{\frac{r}{n}}^{\floor{\frac{k+t}{2}} - s}
        \leq \frac{1}{2} \mcom
    \end{equation*}
    since $s \leq \floor{\frac{k+t}{2}}$ in the summation and $\floor{\frac{k+t}{2}} \geq \frac{k+1}{2}$ for all $t\geq 1$.

    Let $\wh{A}_b^{(t)}$ be the Kikuchi matrix after the deletion process.
    By \pref{obs:uniform-deletion}, each distinct pair $C,C'\in \calH_i^{(t)}$ contributes exactly $(1-\rho)$ fraction of the original edges.
    Thus, we have
    \begin{equation*}
        (x^{\odot r})^\top \wh{A}_b^{(t)} x^{\odot r} =
        (1-\rho) \cdot (x^{\odot r})^\top A_b^{(t)} x^{\odot r} \mper
        \numberthis \label{eqn:A-hat-quadratic-form}
    \end{equation*}

    Next, we follow the same argument as the proof of \pref{lem:even-signed-kikuchi-norm-bound} to analyze $\wh{A}_b^{(t)}$, using the norm bound of \pref{lem:good-subgraph-bound}.
    Let $\Gamma = D + d\Id$ where $D$ is the degree matrix and $d$ is the average degree, and let $\wt{A}_b = \Gamma^{-1/2} \wh{A}_b^{(t)} \Gamma^{-1/2}$.
    To bound $\norm{\wt{A}_b}_2$, we again use the trace power method $\norm{\wt{A}_b}_2^\ell \leq \tr((\Gamma^{-1} \wh{A}_b^{(t)})^\ell)$ where we choose an even $\ell = 2\ceil{r\log_2 n}$.
    Observe that in expectation, $\E_b\tr((\Gamma^{-1} A_b)^\ell)$ counts the closed walks that use each hyperedge an even number of times.
    This is exactly the same as \pref{lem:good-subgraph-bound} where we count closed walks in an unsigned Kikuchi graph assuming there is no even cover of size $\leq \ell$.
    Furthermore, $d_{S,i} \leq \eta$ is automatically satisfied after the deletion process.
    Thus, we can directly apply \pref{lem:good-subgraph-bound} and show that
    \begin{equation*}
        \E_b\tr\Paren{(\Gamma^{-1} \wh{A}_b^{(t)})^\ell} \leq 2^\ell n^r \Paren{\frac{2\eta\ell}{d}}^{\ell/2} \leq O\Paren{\frac{\eta\ell}{d}}^{\ell/2}
    \end{equation*}
    when $\ell \geq r\log_2 n$.
    Then, by Markov's inequality, we have that $\Pr_b\Brac{\norm{\wt{A}_b}_2 \geq O\Paren{ \sqrt{\frac{\eta\ell}{d}} }} \leq \frac{1}{\poly(n)}$.

    Thus, with high probability we have $\wh{A}_b^{(t)} \preceq O\Paren{ \sqrt{\frac{\eta\ell}{d}} } \cdot \Gamma$, then since $\tr(\Gamma) = 4|E|$,
    \begin{equation*}
        (x^{\odot r})^\top \wh{A}_b^{(t)} x^{\odot r} \leq O\Paren{\sqrt{\frac{\eta\ell}{d}}}\cdot \tr(\Gamma)
        = O\Paren{\sqrt{\frac{\eta\ell}{d}}}\cdot |E| \mper
    \end{equation*}

    Next, let $\wh{f}_t(x) = \frac{k p_t}{2\alpha_t m^2} (x^{\odot r})^\top \wh{A}_b^{(t)} x^{\odot r}$.
    By \pref{obs:parameters-of-kikuchi}, we have $d \geq (\frac{r}{2n})^{k-t} \sum_{i=1}^{p_t} \binom{|\calH_i^{(t)}|}{2}$ when $2k \leq r \leq n/8$.
    Plugging in parameters $|E| = \alpha_t \sum_{i=1}^{p_t} \binom{|\calH_i^{(t)}|}{2}$, $p_t \tau_t \leq 2m$,
    $\eta = B^k \eps^{-2}$, and $\ell = 2\ceil{r\log_2 n}$, standard calculations show that
    \begin{equation*}
        \wh{f}_t(x) \leq O(1)\frac{k p_t}{\alpha_t m^2} \sqrt{\frac{\eta \ell}{d}} |E|
        \leq O(1) \frac{kp_t}{m^2} \sqrt{\eta \ell \Paren{\frac{2n}{r}}^{k-t} \sum_{i=1}^{p_t}\binom{|\calH_i^{(t)}}{2}}
        \leq O(1) \sqrt{\frac{\eta r\log n}{m\tau_t} \Paren{\frac{2n}{r}}^{k-t}} \mper
    \end{equation*}
    Suppose $m \geq C^k n\log n \cdot (\frac{n}{r})^{\frac{k}{2}-1} \eps^{-4}$ for some large enough constant $C$.
    We split into cases:
    \begin{enumerate}
        \item For $t\leq \frac{k-1}{2}$, we set $\tau_t = (\frac{n}{r})^{\frac{k}{2}-t} \cdot 4k\eps^{-2}$, thus $\wh{f}_t(x) \leq \frac{\eps^2}{4}$.

        \item For $t \geq \frac{k+1}{2}$, we set $\tau_t = 4k\eps^{-2}$, thus $\wh{f}_t(x) \leq \frac{\eps^2}{4}(\frac{n}{r})^{\frac{k}{4}-\frac{t}{2}} < \frac{\eps^2}{4}$.
    \end{enumerate}
    Therefore, by calculating $\norm{\wt{A}_b}_2$, which can be done in $n^{O(r)}$ time, we can certify that $\wh{f}_t(x) \leq \frac{\eps^2}{4}$.
    Combined with \pref{eqn:A-hat-quadratic-form} and the bound of $\rho\leq 1/2$, we can certify that
    \begin{equation*}
        f_t(x) \leq \frac{1}{1-\rho} \cdot \wh{f}_t(x) \leq \frac{\eps^2}{2} \mcom
    \end{equation*}
    and with \pref{eqn:psi-t-first-term}, we can certify an upper bound on \pref{eqn:psi-t-squared}:
    \begin{equation*}
        \psi_t(x)^2 \leq (\ref{eqn:psi-t-first-term}) + (\ref{eqn:ft-quadratic-form}) \leq \frac{\eps^2}{2} + f_t(x)
        \leq \eps^2 \mcom
    \end{equation*}
    completing the proof.
\end{proof}

\ifnum\RemoveAuthor=1
{}
\else
\section*{Acknowledgments}
We would like to thank Omar Alrabiah for heroic feedback on an earlier draft of this paper and Benny Sudakov for an illuminating discussion and comments.
J.H.\ would like to thank Peter Manohar for discussions about~\cite{GKM21}.
S.M.\ would like to thank Siqi Liu and Tselil Schramm for discussions on an independent problem that fueled some of the ideas.
Finally, much of this work was conducted when the authors were visiting the program on ``Computational Complexity of Statistical Inference'' at Simons Institute for the Theory of Computing in Fall 2021.  We would like to thank the institute for their hospitality and support!
S.M.\ was also visiting Microsoft Research Redmond when part of this work was done.

\fi

\bibliographystyle{alpha}
\bibliography{main}

\newcommand{\etalchar}[1]{$^{#1}$}
\begin{thebibliography}{DMO{\etalchar{+}}19}

\bibitem[AF09]{AF09}
Noga Alon and Uriel Feige.
\newblock On the power of two, three and four probes.
\newblock In {\em Proceedings of the twentieth annual ACM-SIAM symposium on
  Discrete algorithms}, pages 346--354. SIAM, 2009.

\bibitem[AHL02]{AHL02}
Noga Alon, Shlomo Hoory, and Nathan Linial.
\newblock The {M}oore bound for irregular graphs.
\newblock {\em Graphs and Combinatorics}, 18(1):53--57, 2002.

\bibitem[AOW15]{AOW15}
Sarah~R Allen, Ryan O'Donnell, and David Witmer.
\newblock {How to refute a random CSP}.
\newblock In {\em 2015 IEEE 56th Annual Symposium on Foundations of Computer
  Science}, pages 689--708. IEEE, 2015.

\bibitem[Big93]{MR1271140}
Norman Biggs.
\newblock {\em Algebraic graph theory}.
\newblock Cambridge Mathematical Library. Cambridge University Press,
  Cambridge, second edition, 1993.

\bibitem[BKHL99]{BKHL99}
Claudia Bertram-Kretzberg, Thomas Hofmeister, and Hanno Lefmann.
\newblock Sparse 0- 1 matrices and forbidden hypergraphs.
\newblock {\em Combinatorics, Probability and Computing}, 8(5):417--427, 1999.

\bibitem[BMS08]{BMS08}
Louay Bazzi, Mohammad Mahdian, and Daniel~A Spielman.
\newblock The minimum distance of turbo-like codes.
\newblock {\em IEEE Transactions on Information Theory}, 55(1):6--15, 2008.

\bibitem[Bol78]{MR506522}
B\'{e}la Bollob\'{a}s.
\newblock {\em Extremal graph theory}, volume~11 of {\em London Mathematical
  Society Monographs}.
\newblock Academic Press, Inc. [Harcourt Brace Jovanovich, Publishers],
  London-New York, 1978.

\bibitem[BR14]{BR14}
S~Ajesh Babu and Jaikumar Radhakrishnan.
\newblock An entropy-based proof for the {M}oore bound for irregular graphs.
\newblock In {\em Perspectives in Computational Complexity}, pages 173--181.
  Springer, 2014.

\bibitem[CS22]{CS22}
David~Munh{\'a} Correia and Benny Sudakov.
\newblock Personal communication.
\newblock 2022.

\bibitem[DMO{\etalchar{+}}19]{DMO19}
Yash Deshpande, Andrea Montanari, Ryan O'Donnell, Tselil Schramm, and
  Subhabrata Sen.
\newblock {The threshold for SDP-refutation of random regular NAE-3SAT}.
\newblock In {\em Proceedings of the Thirtieth Annual ACM-SIAM Symposium on
  Discrete Algorithms}, pages 2305--2321. SIAM, 2019.

\bibitem[dT22]{dT22}
Tommaso d'Orsi and Luca Trevisan.
\newblock {A Ihara-Bass Formula for Non-Boolean Matrices and Strong Refutations
  of Random CSPs}.
\newblock {\em arXiv preprint arXiv:2204.10881}, 2022.

\bibitem[Fei02]{Fei02}
Uriel Feige.
\newblock Relations between average case complexity and approximation
  complexity.
\newblock In {\em Proceedings of the thiry-fourth annual ACM symposium on
  Theory of computing}, pages 534--543, 2002.

\bibitem[Fei08]{Fei08}
Uriel Feige.
\newblock Small linear dependencies for binary vectors of low weight.
\newblock In {\em Building bridges}, pages 283--307. Springer, 2008.

\bibitem[FKO06]{FKO06}
Uriel Feige, Jeong~Han Kim, and Eran Ofek.
\newblock Witnesses for non-satisfiability of dense random {3CNF} formulas.
\newblock In {\em 2006 47th Annual IEEE Symposium on Foundations of Computer
  Science (FOCS'06)}, pages 497--508. IEEE, 2006.

\bibitem[FM17]{FM17}
Zhou Fan and Andrea Montanari.
\newblock How well do local algorithms solve semidefinite programs?
\newblock In {\em Proceedings of the 49th Annual ACM SIGACT Symposium on Theory
  of Computing}, pages 604--614, 2017.

\bibitem[GKM21]{GKM21}
Venkatesan Guruswami, Pravesh~K Kothari, and Peter Manohar.
\newblock {Algorithms and Certificates for Boolean CSP Refutation:" Smoothed is
  no harder than Random"}.
\newblock {\em arXiv preprint arXiv:2109.04415}, 2021.

\bibitem[Hoo02]{hoory2002size}
Shlomo Hoory.
\newblock The size of bipartite graphs with a given girth.
\newblock {\em Journal of Combinatorial Theory, Series B}, 86(2):215--220,
  2002.

\bibitem[Kee11]{Kee11}
Peter Keevash.
\newblock Hypergraph turan problems.
\newblock {\em Surveys in combinatorics}, 392:83--140, 2011.

\bibitem[KV00]{KV00}
Jeong~Han Kim and Van~H Vu.
\newblock Concentration of multivariate polynomials and its applications.
\newblock {\em Combinatorica}, 20(3):417--434, 2000.

\bibitem[MOP20]{MOP20}
Sidhanth Mohanty, Ryan O’Donnell, and Pedro Paredes.
\newblock The {SDP} value for random two-eigenvalue {CSP}s.
\newblock In {\em 37th International Symposium on Theoretical Aspects of
  Computer Science}, 2020.

\bibitem[NV08]{NV08}
Assaf Naor and Jacques Verstra{\"e}te.
\newblock Parity check matrices and product representations of squares.
\newblock {\em Combinatorica}, 28(2):163--185, 2008.

\bibitem[RRS17]{RRS17}
Prasad Raghavendra, Satish Rao, and Tselil Schramm.
\newblock Strongly refuting random {CSPs} below the spectral threshold.
\newblock In {\em Proceedings of the 49th Annual ACM SIGACT Symposium on Theory
  of Computing}, pages 121--131, 2017.

\bibitem[SS12]{SS12}
Warren Schudy and Maxim Sviridenko.
\newblock Concentration and moment inequalities for polynomials of independent
  random variables.
\newblock In {\em Proceedings of the twenty-third annual ACM-SIAM symposium on
  Discrete Algorithms}, pages 437--446. SIAM, 2012.

\bibitem[Tro15]{Tropp}
Joel~A Tropp.
\newblock An introduction to matrix concentration inequalities.
\newblock {\em Foundations and Trends{\textregistered} in Machine Learning},
  8(1-2):1--230, 2015.

\bibitem[WAM19]{WAM19}
Alexander~S Wein, Ahmed~El Alaoui, and Cristopher Moore.
\newblock {The Kikuchi hierarchy and tensor PCA}.
\newblock In {\em 2019 IEEE 60th Annual Symposium on Foundations of Computer
  Science (FOCS)}, pages 1446--1468. IEEE, 2019.

\bibitem[{Wik}22]{enwiki:1096680615}
{Wikipedia contributors}.
\newblock Moore graph --- {Wikipedia}{,} the free encyclopedia, 2022.
\newblock [Online; accessed 12-July-2022].

\end{thebibliography}

\appendix
\section{Alternative proof of the Moore bound for irregular graphs}
\label{app:exact-moore-bound}

We proved the weak Moore bound (\pref{prop:weak-moore-bound}) by showing that if there is no cycle of length $\leq \ell$, then $A \prec \frac{2n^{1/\ell}}{\sqrt{d}}(D+d\Id)$ (\pref{claim:weak-moore-bound-norm}) where $D$ is the diagonal degree matrix and $d$ is the average degree, which then gives us a bound of $2\ceil{\log_{d/16} n}$.
In this section, we prove that using a more carefully chosen diagonal matrix $\Gamma'$, such a strategy can recover the exact Moore bound $2 \log_{d-1} n$.
This provides an alternative proof of the Moore bound in addition to the existing proofs by~\cite{AHL02} and \cite{BR14}\footnote{\cite{AHL02} and \cite{BR14} actually obtained a slightly more precise bound depending on whether the girth of the graph is odd or even.}.

\begin{theorem}[Moore bound for irregular graphs] \label{thm:moore-bound}
    Suppose $G$ is a graph on $n$ vertices with average degree $d > 2$.  Then $G$ has a cycle of length $2(\floor{\log_{d-1} n} + 1)$.
\end{theorem}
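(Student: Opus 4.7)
My plan is to redo the reweighted spectral double counting of \pref{prop:weak-moore-bound} with a diagonal weight $\Gamma'$ that captures the $(d-1)$-branching of tree walks, rather than the $d$-branching that is captured by $\Gamma = D + d\Id$.  A natural candidate is $\Gamma'_{uu} \coloneqq d_u - 1$, and a one-line preprocessing step --- iteratively delete any vertex of degree $0$ or $1$, which lies on no cycle and whose removal only raises the average degree when $d > 2$ --- lets us assume $d_u \ge 2$ throughout.

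The Rayleigh lower bound at $\1$ is immediate: $\1^{\top}A\1 = nd$ and $\1^{\top}\Gamma'\1 = n(d-1)$, so $\Norm{\Gamma'^{-1/2} A \Gamma'^{-1/2}}_2 \ge d/(d-1)$.  For the matching upper bound under the contrapositive hypothesis ``no cycle of length at most $\ell$,'' I would mirror the proof of \pref{claim:weak-moore-bound-norm} and bound $\tr((\Gamma'^{-1}A)^\ell)$ by enumerating closed tree walks via their Dyck encodings.  The virtue of $\Gamma' = D - \Id$ is an exact cancellation at each non-initial ``new edge'' step: from $u_i$ there are at most $d_{u_i}-1$ new-neighbor choices (excluding the backtrack vertex $u_{i-1}$), each with weight $1/(d_{u_i}-1)$, contributing exactly $1$.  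Backtrack steps contribute $1/(d_{u_i}-1)$, and the forced first step contributes at most $d_{u_1}/(d_{u_1}-1) \le 2$.

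The main obstacle is that this naive trace bound loses an absolute constant: the sum over the $C_{\ell/2} \sim 4^{\ell/2}$ many Dyck sequences, combined with the absence of per-backtrack savings at degree-$2$ vertices (where $1/(d_{u_i}-1) = 1$), prevents the spectral upper bound from matching $d/(d-1)$ tightly enough to extract the $\log_{d-1}$ base exactly.  The ``additional tool'' I expect the proof to invoke is a sharper non-backtracking-aware ingredient that cancels this factor.  For instance, one could use the identity that in a graph of girth $>2k$ the length-$k$ non-backtracking walks from any vertex $v$ are in bijection with $\{u : d(v,u)=k\}$ (and hence number at most $n$), combined with an averaging or Ihara--Bass-type relation among $A$, $D - \Id$, and the non-backtracking matrix, in order to replace the lossy Dyck enumeration by a tight vertex count.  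Coupled with the lower bound $d/(d-1)$, this should pin down exactly $\ell \le 2(\floor{\log_{d-1}n}+1)$ and thereby deliver the theorem.
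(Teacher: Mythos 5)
You correctly anticipate the right ingredients at a high level (a non-backtracking/Ihara--Bass relation, and the girth fact that there is at most one non-backtracking walk of a given length $\le k$ between any two vertices), but the proposal stops exactly where the proof has to happen: the final paragraph is a hope (``one could use\dots should pin down''), not an argument. More importantly, the certificate shape you commit to cannot deliver the exact bound. You propose to upper bound $\Norm{(D-\Id)^{-1/2}A(D-\Id)^{-1/2}}_2$ under the girth hypothesis and to couple it with the Rayleigh lower bound $d/(d-1)$ at $\1$. But for a $d$-regular graph this normalized norm is \emph{exactly} $d/(d-1)$ no matter how large the girth is (and $d$-regular graphs of arbitrarily large girth exist), so any correct upper bound is at least $d/(d-1)$, and the comparison by itself ties nothing between $\ell$, $n$ and $d$. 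To extract the base $d-1$ from such a comparison, the upper bound would have to take the very particular form $n^{2/\ell}\, d/(d-1)^2$ (or sharper), which is essentially a restatement of the Moore bound itself; meanwhile the Dyck-path trace bound you describe cannot certify anything below roughly $2n^{1/\ell}/\sqrt{d-1}$, the growth rate of tree walks, which is exactly why you lose the constant in the base. So the missing ``additional tool'' is not a refinement of the same norm bound --- the certificate itself must change.

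The paper's proof (\pref{app:exact-moore-bound}) keeps $\Id$ and $D-\Id$ with \emph{different} coefficients: \pref{lem:A-Ihara-bass} shows that if there is no cycle of length $\le \ell = 2k$ then $A \preceq n^{2/\ell}\Id + n^{-2/\ell}(D-\Id)$, i.e.\ $H(t) = \Id - At + (D-\Id)t^2 \succeq 0$ at $t = n^{-1/k}$. This is proved not by a trace bound on a reweighted adjacency matrix but through the generating function $J(t)=\sum_{s\ge 0} A^{(s)}t^s = (1-t^2)H(t)^{-1}$ of non-backtracking walk matrices: girth $>2k$ makes $A^{(k)}$ a $0/1$ matrix, hence $\Norm{A^{(k)}}_2\le n$ (the fact you anticipated), and combined with $\tr(A^{(s)})\le\sqrt{n}\,\Norm{A^{(k)}}_2^{q}\Norm{A^{(r)}}_F$ (\pref{lem:upper-bound-l-walks}) this gives $\tr(J(t))<\infty$ for all $t<n^{-1/k}$, so $H(t)\succ 0$ on $[0,n^{-1/k})$ and $H(n^{-1/k})\succeq 0$ by continuity of eigenvalues. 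Only then does $\1$ enter: the quadratic form yields $d\le x + x^{-1}(d-1)$ with $x=n^{2/\ell}$, i.e.\ $x^2-dx+(d-1)\ge 0$, whose roots are $1$ and $d-1$; since $x>1$ this forces $n^{2/\ell}\ge d-1$ and hence $\ell\le 2\log_{d-1}n$, giving the theorem (no preprocessing of low-degree vertices is needed). The exact base $d-1$ thus comes from the root structure of this scalar quadratic, not from matching a spectral norm against $d/(d-1)$; to complete your plan you would need to replace the single multiple of $D-\Id$ by this two-parameter certificate and control the non-backtracking generating function directly.
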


The following lemma shows what the ``correct'' diagonal matrix should be to recover the exact Moore bound.

\begin{lemma}
\label{lem:A-Ihara-bass}
    Let $G$ be a graph with $n$ vertices and degree matrix $D$ that has no cycle of length $\leq \ell$ for some even $\ell\in\N$.
    Then, the adjacency matrix $A$ satisfies
    \begin{equation*}
        A \preceq n^{2/\ell} \Id + n^{-2/\ell} (D-\Id) \mper
    \end{equation*}
\end{lemma}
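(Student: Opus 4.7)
Setting $z := n^{-2/\ell}$ and the diagonal matrix $\Gamma(z) := (1 - z^2)\Id + z^2 D$ (whose $v$-th diagonal entry is $1 + z^2(d_v - 1)$), the target $A \preceq n^{2/\ell}\Id + n^{-2/\ell}(D - \Id)$ is algebraically equivalent to the PSD condition $M(z) := \Gamma(z) - zA \succeq 0$, and hence to the spectral inequality $\Norm{z\Gamma(z)^{-1/2} A \Gamma(z)^{-1/2}}_2 \leq 1$.  The label ``Ihara--Bass'' signals that the right tool is the classical identity $\det(I - zB) = (1 - z^2)^{|E| - n}\det M(z)$, where $B$ is the $2|E| \times 2|E|$ non-backtracking walk operator on directed edges of $G$.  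Since $B$ is non-negative, Perron--Frobenius together with continuity from $M(0) = \Id \succ 0$ shows that the least positive $z$ at which $M(z)$ ceases to be PSD is $z = 1/\rho(B)$, so the entire problem reduces to the spectral estimate $\rho(B) \leq n^{2/\ell}$ under the girth $> \ell$ hypothesis.

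My plan is to obtain this bound in the spirit of the paper by applying the reweighted trace-moment method of \pref{sec:warmup} directly to the symmetric matrix $\wt{A}(z) := z\Gamma(z)^{-1/2} A \Gamma(z)^{-1/2}$: bound $\Norm{\wt{A}(z)}_2^\ell \leq z^\ell \tr((\Gamma(z)^{-1} A)^\ell)$ and then exploit that, under girth $> \ell$, every closed walk of length $\ell$ is a trivial tree-walk encodable by a starting vertex together with a $\{0,1\}$-bit pattern exactly as in the proof of \pref{claim:weak-moore-bound-norm}.  The crucial analytic input is the AM--GM inequality $1 + z^2(d - 1) \geq 2z\sqrt{d - 1}$, which produces the per-step bound $\frac{z}{1 + z^2(d - 1)} \leq \frac{1}{2\sqrt{d - 1}}$; combined with the $d_{v_i} - 1$ branching factor for new steps and a single branch for each backtrack, a matched pair consisting of a ``new'' step at a tree-parent $u$ and a ``back'' step at the corresponding child $c$ contributes a factor proportional to $\sqrt{(d_u - 1)/(d_c - 1)}$.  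Summing over starting vertices, Catalan-many bit patterns, and subtrees in $G$ of size $\ell/2 + 1$, and using $z^\ell = n^{-2}$, the total trace should be at most $1$.

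The main obstacle I expect is that the per-edge ratio $\sqrt{(d_u - 1)/(d_c - 1)}$ does not telescope cleanly along a general irregular tree, so a naive aggregation loses factors.  Closing this gap cleanly is precisely where the Ihara--Bass identity enters, since its denominator $M(z)$ is exactly the transfer operator governing the non-backtracking recursion $A^{(\ell+1)} = A\cdot A^{(\ell)} - (D - \Id)A^{(\ell-1)}$ among the non-backtracking walk matrices $A^{(\ell)}$.  I would therefore either (i) run a generating-function computation for the weighted tree walks that matches the Ihara--Bass denominator identically, or (ii) insert an extra diagonal reweighting by a Perron-like eigenvector of $B$ acting on the universal cover (a Bethe tree with vertex degrees $d_v$), so that the per-edge contributions equalize.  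The diagonal $\Gamma(z) = (1 - z^2)\Id + z^2 D$ is exactly the weighting compatible with this Ihara--Bass factorization, which explains the name of the lemma.
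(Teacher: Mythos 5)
You have identified the right reduction---with $z=n^{-2/\ell}$ the lemma is exactly the statement $H(z)\coloneqq \Id - zA + z^2(D-\Id)\succeq 0$, the Ihara--Bass quadratic---but the proposal never proves the one estimate that carries all of the content. Your Perron--Frobenius step only \emph{translates} the problem into the bound $\rho(B)\le n^{2/\ell}$ for the non-backtracking edge operator $B$, and that bound is left unestablished: the trace-moment plan you sketch is, as you yourself concede, blocked by the failure of the per-edge factors $\sqrt{(d_u-1)/(d_c-1)}$ to telescope on irregular graphs, and the two proposed repairs (a generating-function computation matching the Ihara--Bass denominator, or an extra reweighting by a Perron-like eigenvector on the universal cover) are announced but not carried out. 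Moreover $\rho(B)$ is genuinely delicate to control by naive means: $B$ is non-normal, so the fact that girth $>\ell$ kills all closed tailless non-backtracking walks of length $\le\ell$ (i.e.\ $\tr(B^s)=0$ for $1\le s\le\ell$) says nothing by itself about $\rho(B)$, since complex eigenvalues can cancel in the trace; and row-sum bounds on powers of $B$ scale like $(\Delta-1)^k$ in terms of the maximum degree $\Delta$, not like $n$. So the heart of the argument is missing, not merely unpolished.

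For comparison, the paper closes exactly this step without ever touching the edge operator. It works with the symmetric, vertex-indexed non-backtracking walk matrices $A^{(s)}$, whose generating function satisfies $J(t)=\sum_{s\ge 0}A^{(s)}t^s=(1-t^2)H(t)^{-1}$ (\pref{fact:non-backtracking}), and uses two elementary inputs: (i) girth $>\ell=2k$ forces every entry of $A^{(k)}$ to be $0$ or $1$, hence $\Norm{A^{(k)}}_2\le n$; and (ii) the splitting $\tr(A^{(s)})\le\sqrt{n}\,\Norm{A^{(k)}}_2^{\,q}\,\Norm{A^{(r)}}_F$ for $s=qk+r$ (\pref{lem:upper-bound-l-walks}), obtained by allowing backtracks every $k$ steps. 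Together these show $\tr(J(t))<\infty$ for all $t<n^{-1/k}$, so $H(t)$ can never acquire a zero eigenvalue on $[0,n^{-1/k})$; continuity of eigenvalues from $H(0)=\Id\succ 0$ then gives $H(t)\succ 0$ on that interval and $H(n^{-1/k})\succeq 0$ in the limit, which is the lemma. If you wish to keep your route through $\rho(B)$, you must supply a quantitative use of the girth hypothesis of this kind (some control on non-backtracking walk counts at scale $k$, e.g.\ the $0/1$ property of $A^{(k)}$); the AM--GM per-step bound alone cannot succeed, for precisely the degree-imbalance reason you identified, and it would in any case only recover a $2\sqrt{d-1}$-type regular bound rather than the irregular Moore bound the lemma encodes.
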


\begin{proof}[Proof of \pref{thm:moore-bound} by \pref{lem:A-Ihara-bass}]
    Assuming there is no cycle of length $\leq \ell$, \pref{lem:A-Ihara-bass} implies that
    \begin{equation*}
        \vec{1}^\top A \vec{1} = nd \leq n\cdot (n^{2/\ell} + n^{-2/\ell}(d-1)) \mper
    \end{equation*}
    Let $x = n^{2/\ell}$, then we have $x^2 - dx + (d-1) \geq 0$, which implies that $x \geq d-1$ (as $x\leq 1$ is not valid).
    Taking logs, we get
    \begin{equation*}
        \frac{2}{\ell}\log n \geq \log(d-1)
        \Longrightarrow \ell \leq 2\log_{d-1} n \mper
    \end{equation*}
    $\ell$ is even, so $\ell < 2(\floor{\log_{d-1}n}+1)$. This completes the proof.
\end{proof}

The proof of \pref{lem:A-Ihara-bass} is based on \emph{non-backtracking walks}, which are walks such that no edge is the inverse of its preceding edge.
We note that both proofs of \cite{AHL02} and \cite{BR14} also analyze non-backtracking walks.
For a graph $G$ on $n$ vertices with adjacency matrix $A$, we define $A^{(s)}$ to be the $n\times n$ matrix whose $(u, v)$ entry counts the number of length-$s$ non-backtracking walks between vertices $u$ and $v$ in $G$.
The following is a standard fact.
\begin{fact}[Recurrence and generating function of $A^{(s)}$]
\label{fact:non-backtracking}
    The non-backtracking matrices $A^{(s)}$ satisfy the following recurrence:
    \begin{equation*}
    \begin{aligned}
        A^{(0)} &= \Id \mcom \\
        A^{(1)} &= A \mcom \\
        A^{(2)} &= A^2 - D \mcom \\
        A^{(s)} &= A^{(s-1)} A - A^{(s-2)} (D-\Id) \mcom \quad s > 2 \mper
    \end{aligned}
    \end{equation*}
    The recurrences imply that these matrices have a generating function:
    \begin{equation*}
        J(t) \coloneqq \sum_{s=0}^{\infty} A^{(s)} t^{s} = (1-t^2) \cdot H(t)^{-1} \mcom
        \text{ where } H(t) \coloneqq \Id - At + (D-\Id)t^2
    \end{equation*}
    for $t \in [0, 1)$ whenever the series converges.
\end{fact}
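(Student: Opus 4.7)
The plan is to prove the recurrence combinatorially from the definition of $A^{(s)}$ as counting non-backtracking walks, and then derive the generating function identity as a purely algebraic consequence of the recurrence. The base cases $A^{(0)} = \Id$ and $A^{(1)} = A$ are immediate from the definition. For $A^{(2)}$, a length-2 non-backtracking walk from $u$ to $v$ is a two-step walk $u \to w \to v$ with $v \ne u$, so $(A^{(2)})_{uv} = (A^2)_{uv} - d_u \cdot \mathbf{1}[u = v]$, which gives $A^{(2)} = A^2 - D$.

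For the main recurrence, the key observation is that $(A^{(s-1)} A)_{uv}$ enumerates pairs consisting of a non-backtracking walk $u = v_0 \to v_1 \to \cdots \to v_{s-1}$ together with an edge $v_{s-1} \to v$. Extending such a walk produces a non-backtracking walk of length $s$ from $u$ to $v$ precisely when $v \ne v_{s-2}$. Hence I would write
\[
    (A^{(s-1)} A)_{uv} \;=\; A^{(s)}_{uv} \;+\; B^{(s)}_{uv},
\]
where $B^{(s)}_{uv}$ counts ``bad'' walks of the form $u \to \cdots \to v_{s-2} \to v_{s-1} \to v_{s-2}$ with $v_{s-2} = v$. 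To count these bad walks for $s \geq 3$, I would decompose: fix the non-backtracking prefix $u \to \cdots \to v$ of length $s-2$ (contributing a factor $A^{(s-2)}_{uv}$), then choose the second-to-last vertex $v_{s-1}$ as any neighbor of $v$ compatible with non-backtracking, i.e.\ different from $v_{s-3}$. This leaves exactly $d_v - 1$ choices, so $B^{(s)}_{uv} = A^{(s-2)}_{uv}(d_v - 1)$, which as a matrix equals $A^{(s-2)}(D - \Id)$. Rearranging yields $A^{(s)} = A^{(s-1)} A - A^{(s-2)}(D-\Id)$ for $s > 2$.

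For the generating function, I would multiply the recurrence by $t^s$ and sum. Splitting off the low-order terms and using the base cases:
\[
    J(t) \;=\; \Id + At + (A^2-D)t^2 + \sum_{s \geq 3}\bigl(A^{(s-1)}A - A^{(s-2)}(D-\Id)\bigr) t^s.
\]
Re-indexing the two sums via $r = s-1$ and $r = s-2$ respectively gives $\sum_{s\geq 3} A^{(s-1)}t^{s-1} = J(t) - \Id - At$ and $\sum_{s\geq 3} A^{(s-2)} t^{s-2} = J(t) - \Id$. Substituting and collecting like terms collapses the constant pieces to $(1-t^2)\Id$, producing
\[
    J(t)\bigl(\Id - At + (D-\Id)t^2\bigr) \;=\; (1-t^2)\Id,
\]
i.e.\ $J(t) H(t) = (1-t^2)\Id$, and inverting $H(t)$ (valid for small enough $t$, guaranteed by the series convergence hypothesis) gives $J(t) = (1-t^2)H(t)^{-1}$.

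The only delicate step is the combinatorial bookkeeping for the bad walks: one must distinguish the case $s=2$ (where the prefix of length $0$ has no ``second-to-last'' vertex, so the count is $d_v$ and produces the anomalous $-D$ rather than $-(D-\Id)$) from the case $s \geq 3$ (where the prefix supplies a forbidden vertex $v_{s-3}$, shrinking the neighbor count to $d_v - 1$). Beyond this boundary subtlety, the rest is bookkeeping with a careful index shift in the generating function, and the inversion of $H(t)$ is justified since $J(t)$ is a well-defined formal (and for small $t$, convergent) power series.
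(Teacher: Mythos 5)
Your proof is correct and complete. The paper itself offers no argument here --- it cites the recurrence and the identity $J(t)=(1-t^2)H(t)^{-1}$ as a standard fact without proof --- so there is nothing to compare against; what you have written is the standard derivation. The two points that actually require care are both handled properly: the bad-walk count $B^{(s)}_{uv}=A^{(s-2)}_{uv}(d_v-1)$ for $s\ge 3$ (the prefix of length $s-2\ge 1$ forbids exactly one neighbor of $v$ as the penultimate vertex, namely $v_{s-3}$, which is indeed a neighbor of $v$), versus the anomalous $s=2$ case where the empty prefix forbids nothing and the count is $d_v$, producing $-D$ rather than $-(D-\Id)$; and the fact that the recurrence multiplies on the right, so the telescoping yields $J(t)H(t)=(1-t^2)\Id$, from which invertibility of $H(t)$ follows for free whenever the series converges (a one-sided inverse of a square matrix is two-sided), justifying $J(t)=(1-t^2)H(t)^{-1}$.
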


We first prove the following lemma,
\begin{lemma} \label{lem:upper-bound-l-walks}
    Let $s,k\in \N$, $s\geq k$, and let $q,r$ be the quotient and remainder of $s$ divided by $k$, i.e.\ $s = qk + r$.  Then,
    \begin{equation*}
        \tr(A^{(s)}) \leq \sqrt{n} \cdot \norm{A^{(k)}}_2^{q} \cdot \norm{A^{(r)}}_F.
    \end{equation*}
\end{lemma}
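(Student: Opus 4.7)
The plan is to reduce the trace bound to a product of two quantities via an entrywise submultiplicativity property of the non-backtracking matrices, and then apply Cauchy--Schwarz in the Frobenius inner product.

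The key combinatorial observation is that for any $a, b \geq 1$,
\[
    A^{(a+b)} \le A^{(a)} A^{(b)} \quad \text{entrywise.}
\]
Indeed, every non-backtracking walk of length $a+b$ from $u$ to $v$, when cut at time $a$, decomposes into a non-backtracking walk of length $a$ from $u$ to some intermediate vertex $w$ followed by a non-backtracking walk of length $b$ from $w$ to $v$. However, not every such concatenation yields a non-backtracking walk of length $a+b$, because the $a$-th and $(a{+}1)$-th edges may be reverses of each other at the junction. Thus $A^{(a+b)}_{uv} \le (A^{(a)} A^{(b)})_{uv}$.

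Iterating this inequality with $a = k$ for $q$ steps (which is valid because $s \ge qk$) yields $A^{(s)} \le (A^{(k)})^q A^{(r)}$ entrywise, where we interpret $A^{(0)} = \Id$ when $r = 0$. Since both sides are entrywise non-negative matrices, taking the trace preserves the inequality:
\[
    \tr(A^{(s)}) \le \tr\bigl((A^{(k)})^q A^{(r)}\bigr).
\]
By Cauchy--Schwarz with respect to the Frobenius inner product,
\[
    \tr\bigl((A^{(k)})^q A^{(r)}\bigr) \le \bigl\|(A^{(k)})^q\bigr\|_F \cdot \bigl\|A^{(r)}\bigr\|_F.
\]

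Finally, $A^{(k)}$ is a symmetric matrix, since reversing a non-backtracking walk yields a non-backtracking walk of the same length between the reversed endpoints. Let $\lambda_1, \dots, \lambda_n$ be its real eigenvalues. Then
\[
    \bigl\|(A^{(k)})^q\bigr\|_F^2 \;=\; \sum_{i=1}^n \lambda_i^{2q} \;\le\; n \cdot \max_i |\lambda_i|^{2q} \;=\; n\, \bigl\|A^{(k)}\bigr\|_2^{2q},
\]
so $\|(A^{(k)})^q\|_F \le \sqrt{n} \cdot \|A^{(k)}\|_2^q$. Chaining this with the previous two inequalities gives the claimed bound.

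The proof is essentially routine once the entrywise submultiplicativity is established, and I anticipate no genuine obstacle; the only point that requires care is the combinatorial justification of $A^{(a+b)} \le A^{(a)} A^{(b)}$ entrywise (especially ensuring that backtracking at the junction is the only way non-backtracking walks are overcounted), plus the minor bookkeeping in the edge case $r = 0$ where $A^{(r)} = \Id$ and $\|A^{(0)}\|_F = \sqrt{n}$.
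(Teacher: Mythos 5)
Your proof is correct and follows essentially the same route as the paper: your entrywise inequality $A^{(s)} \le (A^{(k)})^q A^{(r)}$ is exactly the paper's counting argument that closed non-backtracking walks form a subset of closed walks allowed to backtrack at every $k$-th step, and the subsequent steps ($\tr(MN)\le \|M\|_F\|N\|_F$ and $\|(A^{(k)})^q\|_F \le \sqrt{n}\,\|A^{(k)}\|_2^q$ via the eigenvalues of the symmetric matrix $A^{(k)}$) coincide with the paper's.
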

\begin{proof}
    $\tr(A^{(s)})$ counts the number of closed non-backtracking walks of length $s$ in the graph.
    Now, consider the set of closed walks of length $s= qk + r$ such that after every $k$ non-backtracking steps, we can ``forget the previous step'', i.e.\ we are allowed to backtrack at step $ik$ for every $i = 0,\dots, q$.
    The number of such walks is $\tr((A^{(k)})^q A^{(r)})$.
    The set of closed non-backtracking walk is clearly a subset of such walks, thus we have
    \begin{equation*}
        \tr(A^{(s)}) \leq \tr( (A^{(k)})^q A^{(r)})
        \leq \Norm{(A^{(k)})^q}_F \cdot \Norm{A^{(r)}}_F.
    \end{equation*}
    Let $\lambda_1,\dots,\lambda_n$ be the eigenvalues of $A^{(k)}$ and $\lambda_{\max} = \|A^{(k)}\|_2$. Then,
    \begin{equation*}
        \Norm{(A^{(k)})^q}_F = \sqrt{\sum_{i=1}^n \lambda_i^{2q} } \leq \sqrt{n} (\lambda_{\max})^{q}.
    \end{equation*}
    This completes the proof.
\end{proof}

With \pref{fact:non-backtracking} and \pref{lem:upper-bound-l-walks}, we now prove \pref{lem:A-Ihara-bass} by analyzing the convergence of $J(t)$ as $t$ increases from $0$.

\begin{proof}[Proof of \pref{lem:A-Ihara-bass}]
    Let $A$ be the adjacency matrix of $G$ with average degree $d>2$, and let $D$ be the diagonal degree matrix $G$.
    Recall the definitions $J(t) = \sum_{s=0}^\infty A^{(s)} t^s$ and $H(t) = \Id - At + (D-\Id)t^2$ from \pref{fact:non-backtracking}.
    We will analyze the convergence of $\tr(J(t))$ as $t$ increase from $0$.

    Observe that $J(0) = H(0) = \Id$, and since $J(t)$ and $H(t)$ are both symmetric matrices, their eigenvalues move continuously on the real line as $t$ increases from 0.
    Thus, suppose there is some $t^* \in (0,1)$ such that $\tr(J(t)) < \infty$ for all $t\in [0, t^*)$, then $H(t) \succ 0$ for all $t\in[0,t^*)$.
    This is easy to see because if not, then there must be some $t'\in[0,t^*)$ such that $H(t')\succeq 0$ but has a zero eigenvalue, and $\tr(J(t'))$ will not converge.

    We next show that we can take $t^* = n^{-2/\ell}$ assuming that $G$ has no cycle of length $\leq \ell=2k$.
    First, observe that every entry of $A^{(k)}$ must be either 0 or 1, otherwise if $A^{(k)}[i,j] > 1$ then there are two distinct length-$k$ paths from $i$ to $j$, meaning there is a cycle of length at most $2k=\ell$, a contradiction.
    Therefore, the $L_1$ norm of each row of $A^{(k)}$ is at most $n$, hence $\|A^{(k)}\|_2 \leq n$.
    Next, observe that for each $s\in\N$ we can write $s = qk+r$, and
    \begin{equation*}
        J(t) = \sum_{s=0}^{\infty} A^{(s)} t^{s}
        \leq \sum_{r=0}^{k-1} \sum_{q=0}^{\infty} A^{(qk+r)} t^{qk+r}.
    \end{equation*}
    By \pref{lem:upper-bound-l-walks}, we have
    \begin{equation*}
        \tr(J(t)) \leq \sum_{r=0}^{k-1} t^r \sqrt{n} \|A^{(r)}\|_F \sum_{q=0}^\infty \|A^{(k)}\|_2^q \cdot t^{qk}
        \leq \sum_{r=0}^{k-1} t^r \sqrt{n} \|A^{(r)}\|_F \sum_{q=0}^\infty (nt^k)^q.
    \end{equation*}
    Thus, if $t < n^{-1/k} < 1$, then $\tr(J(t)) < \infty$.
    Therefore, we have $H(t) \succ 0$ for all $t\in [0, n^{-1/k})$, and by continuity $H(n^{-1/k})\succeq 0$, which means that
    \begin{equation*}
        \Id - n^{-1/k} A + n^{-2/k} (D-\Id) \succeq 0
        \Longrightarrow A \preceq n^{2/\ell} \Id + n^{-2/\ell} (D-\Id)
    \end{equation*}
    as $\ell = 2k$.
    This completes the proof.
\end{proof}





\end{document}